\numberwithin{equation}{section}
\newtheorem{theorem}{Theorem}[section]
\newtheorem{lemma}[theorem]{Lemma}
\newtheorem{proposition}[theorem]{Proposition}
\theoremstyle{definition}
\newtheorem{example}[theorem]{Example}
\newtheorem{remark}[theorem]{Remark}
\newcommand{\geh}{\mathfrak{g}}
\newcommand{\ot}{\otimes}
\newcommand{\Z}{{\mathbb Z}}
\newcommand{\C}{{\mathbb C}}
\newcommand{\Rm}{{\mathscr R}}
\newcommand{\bt}{\boxtimes}
\newcommand{\W}{\mathcal{W}}
\begin{document}

\title{Tetrahedron equation and generalized quantum groups}

\author{Atsuo Kuniba}
\email{atsuo@gokutan.c.u-tokyo.ac.jp}
\address{Institute of Physics, University of Tokyo, Komaba, Tokyo 153-8902, Japan}

\author{Masato Okado}
\email{okado@sci.osaka-cu.ac.jp}
\address{Department of Mathematics, Osaka City University, 
3-3-138, Sugimoto, Sumiyoshi-ku, Osaka, 558-8585, Japan}

\author{Sergey Sergeev}
\email{Sergey.Sergeev@canberra.edu.au}
\address{Faculty of Education, Science, Technology, 
Engineering and Mathematics, University of Canberra, ACT 2106 Australia}

\maketitle

\begin{center}
{\it Dedicated to Professor Rodney Baxter on the occasion of his seventy-fifth birthday}
\end{center}

\vspace{0.5cm}
\begin{center}{\bf Abstract}
\end{center}

We construct $2^n$-families of 
solutions of the Yang-Baxter equation from 
$n$-products of three-dimensional $R$ and $L$ operators
satisfying the tetrahedron equation. 
They are identified with the quantum $R$ matrices for the 
Hopf algebras known as generalized quantum groups.
Depending on the number of $R$'s  and $L$'s involved in the product, 
the trace construction interpolates
the symmetric tensor representations of $U_q(A^{(1)}_{n-1})$ and the
anti-symmetric tensor representations of $U_{-q^{-1}}(A^{(1)}_{n-1})$,
whereas a boundary vector construction interpolates 
the $q$-oscillator representation of $U_q(D^{(2)}_{n+1})$ and 
the spin representation of $U_{-q^{-1}}(D^{(2)}_{n+1})$.
The intermediate cases are associated with  
an affinization of quantum superalgebras.

\vspace{0.4cm}

\section{Introduction}\label{a:sec1}
Tetrahedron equation \cite{Zam80} is a generalization of the 
Yang-Baxter equation \cite{Bax} and serves as a key to the 
quantum integrability in three dimensions (3D).
It represents a factorization condition on the scattering of straight strings 
in $(2+1)$-dimension and also as a sufficient condition for the 
commutativity of layer-to-layer transfer matrices in 3D lattice models.
Among several versions of the tetrahedron equation 
we are concerned with the following two types in this paper:
\begin{align*}
&\Rm_{1,2,4}\Rm_{1,3,5}\Rm_{2,3,6}\Rm_{4,5,6}=
\Rm_{4,5,6}\Rm_{2,3,6}\Rm_{1,3,5}\Rm_{1,2,4},\\
&{\mathscr L}_{1,2,4}{\mathscr L}_{1,3,5}{\mathscr L}_{2,3,6}\Rm_{4,5,6}=
\Rm_{4,5,6}{\mathscr L}_{2,3,6}{\mathscr L}_{1,3,5}{\mathscr L}_{1,2,4}.
\end{align*}
Here $\Rm$ and ${\mathscr L} $ are linear operators 
on $F \otimes F \otimes F$ and $V \otimes V \otimes F$, respectively 
with some vector spaces $F$ and $V$.
The indices signify the components on which these operators act nontrivially.
The first equation is to hold in 
$\mathrm{End}(F^{\otimes 6})$ and the second one in  
$\mathrm{End}(V^{\otimes 3} \otimes F^{\otimes 3})$.
We refer to $\Rm$ and ${\mathscr L}$ as 3D $R$ and 3D $L$ for short.

The first remarkable example of 3D $R$ was proposed in \cite{Zam81}.
It was referred to as ``an extraordinary feat of intuition" by 
Baxter, who proved that it indeed satisfies the 
tetrahedron equation \cite{Bax83}.
It was actually another extraordinary feat which pioneered the subject and 
inspired subsequent developments that continue in earnest until today. 

The tetrahedron equation reduces to the Yang Baxter equation
\[
R_{1,2} R_{1,3} R_{2,3}  = R_{2,3}R_{1,3} R_{1,2}
\]
if the spaces $4,5,6$, which we call the {\em auxiliary spaces}, are 
evaluated away suitably.
It implies a certain connection between a class of solvable models 
in 2D and 3D by regarding the third direction in the latter as the 
internal degrees of freedom of local spins in the former.
Such a correspondence between 2D and 3D theories 
has been studied in a variety of contexts,  
e.g.~\cite{BSt, S97, KasV}, 
and highlighted by the celebrated interpretation/extension 
of the 2D chiral Potts model \cite{AMPTY, BP} 
and its generalizations \cite{BKMS,DJMM} in the 3D picture \cite{BB, SMS}. 

In this paper we study reductions of the 
tetrahedron equations to the Yang-Baxter equation 
for the distinguished example of the 3D $R$ and the 3D $L$
associated with the quantized algebra of functions on 
$\mathrm{SL}_3$ \cite{KV} and 
the $q$-oscillator algebra \cite{BS}.
The 3D $R$'s in these works are known to coincide \cite{KO1},
contain a parameter $q$ and correspond 
to choosing $F$ to be the 
$q$-bosonic Fock space 
$F = \bigoplus_{m \ge 0}\C |m\rangle$ and    
$V = \C^2$. 
See Section \ref{a:sec2} and the literatures cited therein for more description.

There are three kinds of freedom that one can introduce in 
performing the reduction.
First, the elimination of the auxiliary spaces can be done 
either by taking the {\em trace} \cite{BS} 
or matrix elements with respect to  
special {\em boundary vectors} \cite{KS}.
Curiously this freedom is known to reflect the boundary 
shape of the Dynkin diagram relevant 
to the final result as observed in \cite[Remark 7.2]{KS} and 
\cite[Remark 14]{KO3}.
Second, the reduction can be applied to the 
$n$-layer version of the tetrahedron equations for any $n\ge 1$.
Third, the resulting product of $n$ operators may consist of a 
{\em mixture} of $\Rm$'s and ${\mathscr L}$'s in any order.
This last freedom, which was pointed out in \cite{S09, KO2} but 
hitherto remained almost intact, 
is the theme of systematic investigation in this paper.
It leads to $2^n$-families of 
solutions to the Yang-Baxter equation 
corresponding to $(\Rm \; \text{or}\;  \mathscr{L})^n$.
They act on 
$\W \otimes \W$ where 
$\W = (F  \; \text{or}\; V)^{\otimes n}$ is an arbitrary
$n$-fold tensor product of $F$ and $V$ (Section \ref{ss:Rybe}). 
There is a similarity transformation 
exchanging $F\otimes V$ and $V \otimes F$ locally in $\W$,  
hence there are essentially $(n+1)$-tuple of solutions for each $n$   
(Section \ref{ss:er}).
Our principal result is Theorem \ref{th:main}, which clarifies their origin
as the quantum $R$ matrices for the Hopf algebras 
that we will also introduce in Section \ref{sec:gg}.
They include an affinization of quantum superalgebras \cite{FSV,Y91,Y94} as well as
a class of quantum affine algebras \cite{D86,Ji}.
In general, they offer
examples of {\em generalized quantum groups}.
This notion emerged through the classification of 
pointed Hopf algebras \cite{AS,Hec} 
and was first introduced in \cite{H}. 
For recent developments of generalized quantum groups, 
see for instance \cite{HY,AY,AYY,BY}.

By changing the portion of $\Rm$ and ${\mathscr L}$ or 
equivalently $V$ and $F$ in the $n$-product,  
the trace construction interpolates the quantum $R$ matrices for 
the symmetric tensor representations of the 
quantum affine algebra $U_q(A^{(1)}_{n-1})$ and the
anti-symmetric tensor representations of $U_{-q^{-1}}(A^{(1)}_{n-1})$.
Similarly the boundary vector construction interpolates 
the $q$-oscillator representation of $U_q(D^{(2)}_{n+1})$ and 
the spin representation of $U_{-q^{-1}}(D^{(2)}_{n+1})$.  
The intermediate cases are related to 
the quantum superalgebras (Section \ref{ss:qsa}).
These results generalize and synthesize the previous works
\cite{S09, BS, KS, KO2, KO3, KOproc, KOS}.
They indicate hidden quantum group structures in 3D integrable lattice models,
or put another way, hidden 3D structures in the quantum group theory.

The layout of the paper is as follows.
In Section \ref{a:sec2} we recall the 3D $R$, the 3D $L$ and the 
construction of the $2^n$-families of spectral parameter dependent solutions 
$S(\epsilon_1,\ldots, \epsilon_n)\, (\epsilon_i=0,1)$ 
of the Yang-Baxter equation
by various 2D reductions of their mixed $n$-products.
We explain the equivalence of $S(\epsilon_1,\ldots, \epsilon_n)$ 
under permutations of $\epsilon_1,\ldots, \epsilon_n$ 
and summarize the known results in Section \ref{ss:kr}.
This part serves as an extended version of the introduction.
In Section \ref{sec:gg} we introduce the generalized 
quantum groups 
${\mathcal U}_A={\mathcal U}_A(\epsilon_1,\ldots, \epsilon_n)$, 
${\mathcal U}_B = {\mathcal U}_B(\epsilon_1,\ldots, \epsilon_n)$ 
and their irreducible representations $\pi_x$.
They are relevant to the trace and a boundary vector construction,
respectively.
Precise relations to the quantum superalgebras 
$A_q(m,m')$ and $B_q(m,m')$ \cite{FSV} (see also \cite{Y91}) are explained in 
Section \ref{ss:qsa}.
The quantum $R$ matrices are defined via the 
commutativity with 
${\mathcal U}_A$ or ${\mathcal U}_B$ and a normalization condition.
In Section \ref{sec:qR} the main result of the paper,
Theorem \ref{th:main},  is presented which identifies the 
$S(\epsilon_1,\ldots, \epsilon_n)$ constructed in Section \ref{a:sec2}
with the quantum $R$ matrices introduced in Section \ref{ss:qr}.

The remaining Sections \ref{sec:proof}, \ref{sec:IrreA} and 
\ref{sec:IrreD} are devoted to a proof of Theorem \ref{th:main}.
Our strategy is to establish that $S(\epsilon_1,\ldots, \epsilon_n)$ satisfies the 
same characterization as the quantum $R$ matrices given in Section \ref{ss:qr}.
In Section \ref{sec:proof} we prove the commutativity of 
$S(\epsilon_1,\ldots, \epsilon_n)$ 
with ${\mathcal U}_A$ or ${\mathcal U}_B$.
It is vital to also ensure the irreducibility of the tensor product
representation $\pi_x \otimes \pi_y$ 
in order to characterize the 
$R$ matrices as their commutant.
Since no relevant result was found in the literature, 
we include a self-contained proof of the irreducibility for 
the ${\mathcal U}_A(\epsilon_1,\ldots, \epsilon_n)$-module
in Section \ref{sec:IrreA} and 
the ${\mathcal U}_B(\epsilon_1,\ldots, \epsilon_n)$-module 
in Section \ref{sec:IrreD} 
for $(\epsilon_1,\ldots, \epsilon_n)$ of the form 
$(1^\kappa,0^{n-\kappa})
=(\overbrace{1,\ldots,1}^\kappa,\overbrace{0,\ldots,0}^{n-\kappa})$.
In the course of the proof, we obtain the 
spectral decomposition of the quantum $R$ matrices 
for the ${\mathcal U}_A$ and ${\mathcal U}_B$ explicitly.
In particular (\ref{spd2}), (\ref{spd3}) and Proposition \ref{spd4}
are new results, which lead to the explicit formulas as in 
Examples \ref{ex:r10}--\ref{ex:rB}.

Throughout the paper we assume that $q$ is generic and 
use the following notations:
\begin{align*}
&(z;q)_m = \prod_{k=1}^m(1-z q^{k-1}),\;\;
(q)_m = (q; q)_m,\;\;
\binom{m}{k}_{\!\!q}= \frac{(q)_m}{(q)_k(q)_{m-k}},\\
&[m]=[m]_q = \frac{q^m-q^{-m}}{q-q^{-1}}, \;\;
[m]_q! = \prod_{k=1}^m [k]_q,\;\;
{m\brack k} = \frac{[m]!}{[k]![m-k]!},
\end{align*}
where the both $q$-binomials are to be understood as zero
unless $0 \le k \le m$.

\section{Families of solutions to Yang-Baxter equation}\label{a:sec2}
This section may still be regarded as a continuation of the introduction,
where we formulate our problem and list the preceding results 
precisely. 

\subsection{3D $R$}\label{subsec2.1}
Let $F$ and $F^\ast$ be a Fock space and its dual  
\begin{align*}
F = \bigoplus_{m\ge 0}\C |m \rangle,\quad 
F^\ast = \bigoplus_{m \ge 0} \C \langle m |
\end{align*}
whose pairing is given by $\langle m | m' \rangle = \delta_{m,m'}(q^2)_m$.
Define a linear operator 
$\Rm$ on $F\otimes F \otimes F$ by 
\begin{align}
&\Rm(|i\rangle \otimes |j\rangle \otimes |k\rangle) = 
\sum_{a,b,c\ge 0} \Rm^{a,b,c}_{i,j,k}
|a\rangle \otimes |b\rangle \otimes |c\rangle,\label{Rabc}\\
&\Rm^{a,b,c}_{i,j,k} =\delta^{a+b}_{i+j}\delta^{b+c}_{j+k}
\sum_{\lambda+\mu=b}(-1)^\lambda
q^{i(c-j)+(k+1)\lambda+\mu(\mu-k)}
\frac{(q^2)_{c+\mu}}{(q^2)_c}
\binom{i}{\mu}_{\!\!q^2}
\binom{j}{\lambda}_{\!\!q^2},\label{Rex}
\end{align}
where $\delta^j_{k}=\delta_{j,k}$ just to save the space.
The sum (\ref{Rex}) is over $\lambda, \mu \ge 0$ 
satisfying $\lambda+\mu=b$, which is also bounded by the 
condition $\mu\le i$ and $\lambda \le j$.
The $\Rm$ will simply be called 3D $R$ in this paper.
It satisfies the tetrahedron equation:
\begin{align}\label{TE}
\Rm_{1,2,4}\Rm_{1,3,5}\Rm_{2,3,6}\Rm_{4,5,6}=
\Rm_{4,5,6}\Rm_{2,3,6}\Rm_{1,3,5}\Rm_{1,2,4},
\end{align}
which is an equality in $\mathrm{End}(F^{\otimes 6})$.
Here $\Rm_{i,j,k}$ acts as $\Rm$ on the 
$i,j,k$ th components from the left in the 
tensor product $F^{\otimes 6}$.
With the space $F$ denoted by an arrow, 
the relation (\ref{TE}) is depicted as follows:

{\unitlength 0.1in
\begin{picture}(  6.8900, 15)( 0.4800,-27)
%
{\color[named]{Black}{%
\special{pn 8}%
\special{pa 3938 1476}%
\special{pa 3248 2496}%
\special{fp}%
\special{sh 1}%
\special{pa 3248 2496}%
\special{pa 3302 2452}%
\special{pa 3278 2452}%
\special{pa 3270 2430}%
\special{pa 3248 2496}%
\special{fp}%
}}%
\put(32.0900,-25.6000){\makebox(0,0){6}}%
%
{\color[named]{Black}{%
\special{pn 8}%
\special{pa 4620 1722}%
\special{pa 3930 1986}%
\special{fp}%
}}%
%
{\color[named]{Black}{%
\special{pn 8}%
\special{pa 3932 1990}%
\special{pa 3112 2316}%
\special{fp}%
\special{sh 1}%
\special{pa 3112 2316}%
\special{pa 3182 2310}%
\special{pa 3162 2296}%
\special{pa 3168 2272}%
\special{pa 3112 2316}%
\special{fp}%
}}%
\put(30.4700,-23.4600){\makebox(0,0){5}}%
%
{\color[named]{Black}{%
\special{pn 8}%
\special{pa 3950 2430}%
\special{pa 4510 1606}%
\special{fp}%
\special{sh 1}%
\special{pa 4510 1606}%
\special{pa 4456 1650}%
\special{pa 4480 1650}%
\special{pa 4488 1672}%
\special{pa 4510 1606}%
\special{fp}%
}}%
\put(45.3500,-15.2700){\makebox(0,0){1}}%
%
{\color[named]{Black}{%
\special{pn 8}%
\special{pa 3912 1956}%
\special{pa 3708 1478}%
\special{fp}%
\special{sh 1}%
\special{pa 3708 1478}%
\special{pa 3716 1548}%
\special{pa 3728 1528}%
\special{pa 3752 1532}%
\special{pa 3708 1478}%
\special{fp}%
}}%
%
{\color[named]{Black}{%
\special{pn 8}%
\special{pa 3938 2018}%
\special{pa 4142 2496}%
\special{fp}%
}}%
\put(36.7100,-14.2300){\makebox(0,0){2}}%
%
{\color[named]{Black}{%
\special{pn 8}%
\special{pa 4608 1878}%
\special{pa 3404 1580}%
\special{fp}%
\special{sh 1}%
\special{pa 3404 1580}%
\special{pa 3464 1614}%
\special{pa 3456 1592}%
\special{pa 3474 1576}%
\special{pa 3404 1580}%
\special{fp}%
}}%
\put(33.3300,-15.5300){\makebox(0,0){3}}%
%
{\color[named]{Black}{%
\special{pn 8}%
\special{pa 4302 2314}%
\special{pa 3128 2106}%
\special{fp}%
\special{sh 1}%
\special{pa 3128 2106}%
\special{pa 3190 2138}%
\special{pa 3182 2116}%
\special{pa 3198 2098}%
\special{pa 3128 2106}%
\special{fp}%
}}%
\put(30.5000,-20.8500){\makebox(0,0){4}}%
%
{\color[named]{Black}{%
\special{pn 8}%
\special{pa 2268 2248}%
\special{pa 1064 1950}%
\special{fp}%
\special{sh 1}%
\special{pa 1064 1950}%
\special{pa 1124 1984}%
\special{pa 1116 1962}%
\special{pa 1134 1946}%
\special{pa 1064 1950}%
\special{fp}%
}}%
\put(9.9300,-19.2300){\makebox(0,0){3}}%
%
{\color[named]{Black}{%
\special{pn 8}%
\special{pa 2482 1780}%
\special{pa 1308 1574}%
\special{fp}%
\special{sh 1}%
\special{pa 1308 1574}%
\special{pa 1370 1604}%
\special{pa 1362 1582}%
\special{pa 1378 1566}%
\special{pa 1308 1574}%
\special{fp}%
}}%
\put(12.3000,-15.5200){\makebox(0,0){4}}%
%
{\color[named]{Black}{%
\special{pn 8}%
\special{pa 1228 2294}%
\special{pa 1786 1468}%
\special{fp}%
\special{sh 1}%
\special{pa 1786 1468}%
\special{pa 1732 1512}%
\special{pa 1756 1512}%
\special{pa 1766 1534}%
\special{pa 1786 1468}%
\special{fp}%
}}%
\put(18.1200,-13.9000){\makebox(0,0){1}}%
%
{\color[named]{Black}{%
\special{pn 8}%
\special{pa 1994 2412}%
\special{pa 1584 1456}%
\special{fp}%
\special{sh 1}%
\special{pa 1584 1456}%
\special{pa 1592 1524}%
\special{pa 1606 1504}%
\special{pa 1630 1508}%
\special{pa 1584 1456}%
\special{fp}%
}}%
\put(15.5200,-13.7700){\makebox(0,0){2}}%
%
{\color[named]{Black}{%
\special{pn 8}%
\special{pa 2398 1638}%
\special{pa 1800 1874}%
\special{fp}%
}}%
%
{\color[named]{Black}{%
\special{pn 8}%
\special{pa 1722 1908}%
\special{pa 1012 2198}%
\special{fp}%
\special{sh 1}%
\special{pa 1012 2198}%
\special{pa 1082 2190}%
\special{pa 1062 2178}%
\special{pa 1066 2154}%
\special{pa 1012 2198}%
\special{fp}%
}}%
\put(9.3400,-22.2900){\makebox(0,0){5}}%
%
{\color[named]{Black}{%
\special{pn 8}%
\special{pa 2346 1462}%
\special{pa 1656 2482}%
\special{fp}%
\special{sh 1}%
\special{pa 1656 2482}%
\special{pa 1710 2438}%
\special{pa 1686 2438}%
\special{pa 1678 2416}%
\special{pa 1656 2482}%
\special{fp}%
}}%
\put(16.1700,-25.4700){\makebox(0,0){6}}%
\put(27.3500,-20.0800){\makebox(0,0){$=$}}%
\end{picture}}%

The 3D $R$ was obtained as the intertwiner of the quantized coordinate ring 
$A_q(sl_3)$ \cite{KV}\footnote{
The formula for it on p194 in \cite{KV} contains a misprint unfortunately.
Eq. (\ref{Rex}) here is a correction of it.}.  
It was also found 
from a quantum geometry consideration in a different gauge \cite{BS}. 
They were shown to be the same object in \cite[eq.(2.29)]{KO1}.
Appendix A in \cite{KO3} contains the recursion relations 
characterizing $\Rm$ and useful corollaries which will also be 
utilized in the present paper.
Here we note
\begin{align}\label{rtb}
\Rm = \Rm^{-1},\quad 
\Rm^{a,b,c}_{i,j,k}= \Rm^{c,b,a}_{k,j,i},\quad
\Rm^{a,b,c}_{i,j,k}=
\frac{(q^2)_i(q^2)_j(q^2)_k}{(q^2)_a(q^2)_b(q^2)_c}
\Rm_{a,b,c}^{i,j,k}.
\end{align}
The last property makes it consistent to define the
action of 3D $R$ on $F^*\otimes F^*\otimes F^*$ via
\begin{align*}
(\langle i | \otimes\langle j| \otimes\langle k| )\Rm
= \sum_{a,b,c\ge 0} \Rm^{a,b,c}_{i,j,k}
\langle a| \otimes\langle b| \otimes\langle c|.
\end{align*}
Let  ${\bf h}$ be the linear operator on $F$ and $F^\ast$ 
such that
${\bf h} |m\rangle = m |m\rangle$ and $\langle m | {\bf h} = \langle m | m$.
The factor $\delta^{a+b}_{i+j}\delta^{b+c}_{j+k}$ in 
(\ref{Rex}) implies 
\begin{align}\label{rz}
[\Rm, \,x^{{\bf h}_1}(xy)^{{\bf h}_2}y^{{\bf h}_3} ]=0,
\end{align}
where ${\bf h}_i$ denotes the one acting nontrivially on 
$\overset{i}{F}$ in 
$\overset{1}{F}\otimes \overset{2}{F}\otimes\overset{3}{F}$.

\subsection{Boundary vectors}\label{ss:bv}
Let us introduce the following vectors in $F$ and $F^\ast$:
\begin{align}
&|\chi_1\rangle = \sum_{m\ge 0}\frac{|m\rangle}{(q)_m},\quad
\langle \chi_1| = \sum_{m\ge 0}\frac{\langle m|}{(q)_m},\quad
|\chi_2\rangle = \sum_{m\ge 0}\frac{|2m\rangle}{(q^4)_m},
\quad
\langle \chi_2| = \sum_{m\ge 0}\frac{\langle 2m|}{(q^4)_m}.
\label{xk}
\end{align}
We further set 
$|\chi_s(z)\rangle  = z^{{\bf h}/s}|\chi_s\rangle$ and 
$\langle \chi_s(z) | = \langle \chi_s |z^{{\bf h}/s}$ for $s=1,2$,
where the factor $1/s$ is just a matter of normalization of 
the spectral parameter $z$. 
They are called {\em boundary vectors}.
The following property \cite{KS}, which actually reduces to $x=y=1$ case
by (\ref{rz}),  will play a key role:
\begin{equation}\label{cv}
\begin{split}
{\mathscr R}(|\chi_s(x)\rangle \otimes|\chi_s(xy)\rangle \otimes|\chi_s(y)\rangle)
&=|\chi_s(x)\rangle \otimes|\chi_s(xy)\rangle \otimes|\chi_s(y)\rangle
\in F\otimes F\otimes F,\\
(\langle \chi_s(x)| \otimes \langle \chi_s(xy)| \otimes \langle \chi_s(y)|){\mathscr R}
&= \langle \chi_s(x)| \otimes \langle \chi_s(xy)| \otimes \langle \chi_s(y)|
\in F^*\otimes F^*\otimes F^*.
\end{split}
\end{equation}

\subsection{3D $L$}
Now we proceed to the 3D $L$ \cite{BS}.
Let $V = \C v_0 \oplus \C v_1$ and define ${\mathscr L}$ by
\begin{align}\label{rino}
{\mathscr L} &=({\mathscr L}_{\alpha, \beta}^{\gamma,\delta})
 \in \mathrm{End}(V \otimes V \otimes F),
\quad
{\mathscr L}(v_\alpha \otimes v_\beta \otimes |m\rangle)
= \sum_{\gamma,\delta}v_\gamma \otimes v_\delta \otimes 
{\mathscr L}_{\alpha, \beta}^{\gamma,\delta}|m\rangle,
\end{align}
where ${\mathscr L}_{\alpha, \beta}^{\gamma,\delta}
\in \mathrm{End}(F)$ are zero except the following six cases:
\begin{align}\label{lak}
{\mathscr L}_{0, 0}^{0,0}&= {\mathscr L}_{1,1}^{1,1} = 1,\;\;
{\mathscr L}_{0, 1}^{0,1} = -q{\bf k},\;\;
{\mathscr L}_{1,0}^{1,0} = {\bf k},\;\;
{\mathscr L}_{1,0}^{0,1} = {\bf a}^-,\;\;
{\mathscr L}^{1,0}_{0,1} = {\bf a}^+.
\end{align}
The operators ${\bf a}^\pm, {\bf k} \in \mathrm{End}(F) $ 
are called $q$-oscillators and act on $F$ by
\begin{align}
{\bf a}^+|m\rangle = |m+1\rangle,\quad
{\bf a}^-|m\rangle = (1-q^{2m})|m-1\rangle,\quad
{\bf k}|m\rangle = q^m|m\rangle.
\label{ac1}
\end{align}
Thus ${\bf k}= q^{\bf h}$ in terms of ${\bf h}$ 
defined around (\ref{rz}).
They satisfy the relations
\begin{align}
{\bf k}\,{\bf a}^{\pm} = q^{\pm 1}{\bf a}^{\pm}\,{\bf k},\quad
{\bf a}^+{\bf a}^- = 1-{\bf k}^2,\quad
{\bf a}^-{\bf a}^+ = 1-q^2 {\bf k}^2.
\label{ac2}
\end{align}
The ${\mathscr L}$ will simply be called 3D $L$ in this paper.
It may be regarded as 
a six-vertex model having the $q$-oscillator valued Boltzmann weights.
From this viewpoint the last two relations are viewed as 
a quantization of the so-called free-fermion condition
\cite[eq.(10.16.4)]{Bax}$|_{\omega_7=\omega_8=0}$.
We will also use the notation similar to (\ref{Rabc}) to 
express (\ref{rino}) as
\begin{equation}\label{Lex}
\begin{split}
&{\mathscr L}(v_\alpha \otimes v_\beta \otimes |m\rangle) = 
\sum_{\gamma,\delta, j} {\mathscr L}^{\gamma,\delta, j}_{\alpha, \beta, m}
v_\gamma \otimes v_\delta \otimes |j\rangle,\\
&{\mathscr L}^{0, 0, j}_{0, 0, m}
={\mathscr L}^{1, 1, j}_{1, 1, m}=\delta^j_m,\quad
{\mathscr L}^{0, 1, j}_{0, 1, m}= -\delta^j_mq^{m+1},\quad
{\mathscr L}^{1, 0, j}_{1, 0, m}=\delta^j_mq^m,\\
&{\mathscr L}^{0, 1, j}_{1, 0, m}=\delta^j_{m-1}(1-q^{2m}),\quad
{\mathscr L}^{1, 0, j}_{0, 1, m}=\delta^j_{m+1}.
\end{split}
\end{equation}
The other ${\mathscr L}^{\gamma,\delta, j}_{\alpha, \beta, m}$ are zero.
The 3D $L$ satisfies the $RLLL$ type tetrahedron equation \cite{BS}:
\begin{align}\label{RLLL}
{\mathscr L}_{1,2,4}{\mathscr L}_{1,3,5}{\mathscr L}_{2,3,6}\Rm_{4,5,6}=
\Rm_{4,5,6}{\mathscr L}_{2,3,6}{\mathscr L}_{1,3,5}{\mathscr L}_{1,2,4}.
\end{align}
This is an equality in 
$\mathrm{End}(\overset{1}{V}\otimes \overset{2}{V} \otimes \overset{3}{V}
\otimes \overset{4}{F} \otimes \overset{5}{F} \otimes \overset{6}{F})$,
where 
$\overset{1}{V},\overset{2}{V},\overset{3}{V}$ are copies of $V$ and 
$\overset{4}{F},\overset{5}{F},\overset{6}{F}$ are the ones for $F$.
The indices of ${\mathscr R}$ and ${\mathscr L}$ signify the 
components of the tensor product on which these operators act nontrivially.
With the space $V$ denoted by an dotted arrow, 
the relation (\ref{RLLL}) is depicted as follows:

{\unitlength 0.1in
\begin{picture}(  6.8900, 16.5)( 0.4800,-28.5)
%
{\color[named]{Black}{%
\special{pn 8}%
\special{pa 4320 1504}%
\special{pa 3546 2652}%
\special{fp}%
\special{sh 1}%
\special{pa 3546 2652}%
\special{pa 3600 2608}%
\special{pa 3576 2608}%
\special{pa 3566 2586}%
\special{pa 3546 2652}%
\special{fp}%
}}%
\put(29.6700,-21.0300){\makebox(0,0){$=$}}%
\put(17.1000,-27.0900){\makebox(0,0){6}}%
%
{\color[named]{Black}{%
\special{pn 8}%
\special{pa 2530 1490}%
\special{pa 1754 2638}%
\special{fp}%
\special{sh 1}%
\special{pa 1754 2638}%
\special{pa 1808 2594}%
\special{pa 1784 2594}%
\special{pa 1776 2572}%
\special{pa 1754 2638}%
\special{fp}%
}}%
\put(9.4100,-23.5200){\makebox(0,0){5}}%
%
{\color[named]{Black}{%
\special{pn 8}%
\special{pa 1828 1990}%
\special{pa 1030 2316}%
\special{fp}%
\special{sh 1}%
\special{pa 1030 2316}%
\special{pa 1098 2310}%
\special{pa 1078 2296}%
\special{pa 1084 2272}%
\special{pa 1030 2316}%
\special{fp}%
}}%
%
{\color[named]{Black}{%
\special{pn 8}%
\special{pa 2588 1686}%
\special{pa 1914 1952}%
\special{fp}%
}}%
\put(16.3700,-13.9400){\makebox(0,0){2}}%
%
{\color[named]{Black}{%
\special{pn 13}%
\special{pa 2136 2542}%
\special{pa 1674 1466}%
\special{dt 0.045}%
\special{sh 1}%
\special{pa 1674 1466}%
\special{pa 1682 1536}%
\special{pa 1694 1516}%
\special{pa 1718 1520}%
\special{pa 1674 1466}%
\special{fp}%
}}%
\put(19.2900,-14.0800){\makebox(0,0){1}}%
%
{\color[named]{Black}{%
\special{pn 13}%
\special{pa 1272 2426}%
\special{pa 1900 1496}%
\special{dt 0.045}%
\special{sh 1}%
\special{pa 1900 1496}%
\special{pa 1846 1540}%
\special{pa 1870 1540}%
\special{pa 1880 1562}%
\special{pa 1900 1496}%
\special{fp}%
}}%
\put(12.7500,-15.9000){\makebox(0,0){4}}%
%
{\color[named]{Black}{%
\special{pn 8}%
\special{pa 2682 1848}%
\special{pa 1362 1614}%
\special{fp}%
\special{sh 1}%
\special{pa 1362 1614}%
\special{pa 1424 1646}%
\special{pa 1416 1624}%
\special{pa 1432 1606}%
\special{pa 1362 1614}%
\special{fp}%
}}%
\put(10.0800,-20.0700){\makebox(0,0){3}}%
%
{\color[named]{Black}{%
\special{pn 13}%
\special{pa 2442 2374}%
\special{pa 1088 2038}%
\special{dt 0.045}%
\special{sh 1}%
\special{pa 1088 2038}%
\special{pa 1148 2072}%
\special{pa 1140 2050}%
\special{pa 1158 2034}%
\special{pa 1088 2038}%
\special{fp}%
}}%
\put(33.2200,-21.9000){\makebox(0,0){4}}%
%
{\color[named]{Black}{%
\special{pn 8}%
\special{pa 4730 2448}%
\special{pa 3410 2214}%
\special{fp}%
\special{sh 1}%
\special{pa 3410 2214}%
\special{pa 3472 2244}%
\special{pa 3464 2222}%
\special{pa 3480 2206}%
\special{pa 3410 2214}%
\special{fp}%
}}%
\put(36.4100,-15.9200){\makebox(0,0){3}}%
%
{\color[named]{Black}{%
\special{pn 13}%
\special{pa 5074 1958}%
\special{pa 3720 1620}%
\special{dt 0.045}%
\special{sh 1}%
\special{pa 3720 1620}%
\special{pa 3780 1656}%
\special{pa 3772 1634}%
\special{pa 3790 1618}%
\special{pa 3720 1620}%
\special{fp}%
}}%
\put(40.2000,-14.4500){\makebox(0,0){2}}%
%
{\color[named]{Black}{%
\special{pn 13}%
\special{pa 4320 2114}%
\special{pa 4550 2652}%
\special{dt 0.045}%
}}%
%
{\color[named]{Black}{%
\special{pn 13}%
\special{pa 4290 2046}%
\special{pa 4062 1508}%
\special{dt 0.045}%
\special{sh 1}%
\special{pa 4062 1508}%
\special{pa 4070 1576}%
\special{pa 4082 1556}%
\special{pa 4106 1562}%
\special{pa 4062 1508}%
\special{fp}%
}}%
\put(49.9200,-15.6200){\makebox(0,0){1}}%
%
{\color[named]{Black}{%
\special{pn 13}%
\special{pa 4336 2578}%
\special{pa 4964 1650}%
\special{dt 0.045}%
\special{sh 1}%
\special{pa 4964 1650}%
\special{pa 4910 1694}%
\special{pa 4934 1694}%
\special{pa 4944 1716}%
\special{pa 4964 1650}%
\special{fp}%
}}%
\put(33.1800,-24.8300){\makebox(0,0){5}}%
%
{\color[named]{Black}{%
\special{pn 8}%
\special{pa 4314 2082}%
\special{pa 3392 2448}%
\special{fp}%
\special{sh 1}%
\special{pa 3392 2448}%
\special{pa 3462 2442}%
\special{pa 3442 2428}%
\special{pa 3448 2406}%
\special{pa 3392 2448}%
\special{fp}%
}}%
%
{\color[named]{Black}{%
\special{pn 8}%
\special{pa 5088 1782}%
\special{pa 4312 2080}%
\special{fp}%
}}%
\put(35.0100,-27.2400){\makebox(0,0){6}}%
\end{picture}}%

Viewed as an equation on ${\mathscr R}$, 
(\ref{RLLL}) is equivalent to the 
intertwining relation of the irreducible representations of the 
quantized coordinate ring $A_q(sl_3)$ \cite[eq.(2.15)]{KO1} 
in the sense that
the both lead to the same solution given in (\ref{Rex}) 
up to an overall  normalization.

\subsection{$n$-layer version of tetrahedron equation}

In order to treat $\Rm$ and ${\mathscr L}$ on an equal footing
we introduce the notation
\begin{align*}
&W^{(0)}= F,\quad W^{(1)}= V,\\
&{\mathscr S}^{(0)}= \Rm,\quad {\mathscr S}^{(1)} = {\mathscr L},
\quad
{\mathscr S}^{(0)\,a,b,c}_{\phantom{(0)}\, i,j,k} 
= \Rm^{a,b,c}_{i,j,k},\quad
{\mathscr S}^{(1)\,a,b,c}_{\phantom{(1)}\, i,j,k} 
= {\mathscr L}^{a,b,c}_{i,j,k}.
\end{align*}
Then ${\mathscr S}^{(\epsilon)} \in \mathrm{End}(
W^{(\epsilon)} \otimes W^{(\epsilon)} \otimes F)$ for $\epsilon=0,1$.
From (\ref{Rex}) and (\ref{Lex}) it obeys the conservation law:
\begin{align}\label{ice0} 
{\mathscr S}^{(\epsilon)\, a,b,c}_{\phantom{(\epsilon)}\, i,j,k} = 0 
\;\; \text{unless}\;\; (a+b,b+c)=(i+j,j+k).
\end{align} 
The tetrahedron equations of $RRRR$ type (\ref{TE})
and $RLLL$ type (\ref{RLLL}) are summarized as 
\begin{align}\label{TE2}
{\mathscr S}^{(\epsilon)}_{1,2,4}{\mathscr S}^{(\epsilon)}_{1,3,5}{\mathscr S}^{(\epsilon)}_{2,3,6}\Rm_{4,5,6}=
\Rm_{4,5,6}{\mathscr S}^{(\epsilon)}_{2,3,6}{\mathscr S}^{(\epsilon)}_{1,3,5}{\mathscr S}^{(\epsilon)}_{1,2,4}
\quad (\epsilon=0,1),
\end{align}
which is an equality in 
$\mathrm{End}(
W^{(\epsilon)} \otimes W^{(\epsilon)}\otimes W^{(\epsilon)}
\otimes F\otimes F \otimes F)$.

Let $n$ be a positive integer.
Given an arbitrary sequence 
$(\epsilon_1,\ldots, \epsilon_n) \in \{0,1\}^n$, we set 
\begin{align}\label{Wdef}
&\W= W^{(\epsilon_1)} \otimes \cdots \otimes W^{(\epsilon_n)}.
\end{align}
Regarding (\ref{TE2}) as a one-layer relation,
we extend it to the $n$-layer version.
Let $\overset{\alpha_i}{W^{(\epsilon_i)}},
\overset{\beta_i}{W^{(\epsilon_i)}},
\overset{\gamma_i}{W^{(\epsilon_i)}}$ be copies of $W^{(\epsilon_i)}$,
where $\alpha_i, \beta_i$ and $\gamma_i\,(i=1,\ldots, n)$
are just distinct labels.
Replacing the spaces $1,2,3$ by them in (\ref{TE2})
we have 
\begin{align*}
{\mathscr S}^{(\epsilon_i)}_{\alpha_i, \beta_i, 4}
{\mathscr S}^{(\epsilon_i)}_{\alpha_i, \gamma_i, 5}
{\mathscr S}^{(\epsilon_i)}_{\beta_i, \gamma_i, 6}\Rm_{4,5,6}=
\Rm_{4,5,6}
{\mathscr S}^{(\epsilon_i)}_{\beta_i, \gamma_i, 6}
{\mathscr S}^{(\epsilon_i)}_{\alpha_i, \gamma_i, 5}
{\mathscr S}^{(\epsilon_i)}_{\alpha_i, \beta_i, 4}
\end{align*}
for each $i$.
Thus for any $i$ one can carry $\Rm_{4,5,6}$ through  
${\mathscr S}^{(\epsilon_i)}_{\alpha_i, \beta_i, 4}
{\mathscr S}^{(\epsilon_i)}_{\alpha_i, \gamma_i, 5}
{\mathscr S}^{(\epsilon_i)}_{\beta_i, \gamma_i, 6}$ to the left 
converting it into the reverse order product
${\mathscr S}^{(\epsilon_i)}_{\beta_i, \gamma_i, 6}
{\mathscr S}^{(\epsilon_i)}_{\alpha_i, \gamma_i, 5}
{\mathscr S}^{(\epsilon_i)}_{\alpha_i, \beta_i, 4}$.
Repeating this $n$ times leads to
\begin{equation}\label{TEn}
\begin{split}
&\bigl({\mathscr S}^{(\epsilon_1)}_{\alpha_1, \beta_1, 4}
{\mathscr S}^{(\epsilon_1)}_{\alpha_1, \gamma_1, 5}
{\mathscr S}^{(\epsilon_1)}_{\beta_1, \gamma_1, 6}\bigr)
\cdots 
\bigl({\mathscr S}^{(\epsilon_n)}_{\alpha_n, \beta_n, 4}
{\mathscr S}^{(\epsilon_n)}_{\alpha_n, \gamma_n, 5}
{\mathscr S}^{(\epsilon_n)}_{\beta_n, \gamma_n, 6}\bigr)\Rm_{4,5,6}\\
&= 
\Rm_{4,5,6}
\bigl({\mathscr S}^{(\epsilon_1)}_{\beta_1, \gamma_1, 6}
{\mathscr S}^{(\epsilon_1)}_{\alpha_1, \gamma_1, 5}
{\mathscr S}^{(\epsilon_1)}_{\alpha_1, \beta_1, 4}
\bigr)
\cdots 
\bigl({\mathscr S}^{(\epsilon_n)}_{\beta_n, \gamma_n, 6}
{\mathscr S}^{(\epsilon_n)}_{\alpha_n, \gamma_n, 5}
{\mathscr S}^{(\epsilon_n)}_{\alpha_n, \beta_n, 4}
\bigr).
\end{split}
\end{equation}
This is an equality in 
$\mathrm{End}(\overset{\boldsymbol \alpha}{\W}\otimes 
\overset{\boldsymbol\beta}{\W}\otimes 
\overset{\boldsymbol\gamma}{\W}\otimes 
\overset{4}{F}\otimes 
\overset{5}{F}\otimes 
\overset{6}{F})$,
where 
${\boldsymbol\alpha}=(\alpha_1,\ldots, \alpha_n)$
is the array of labels and 
$\overset{\boldsymbol \alpha}{\W}= 
\overset{\alpha_1}{W^{(\epsilon_1)}}\otimes \cdots \otimes 
\overset{\alpha_n}{W^{(\epsilon_n)}}$.
The notations  
$\overset{\boldsymbol\beta}{\W}$ and $\overset{\boldsymbol\gamma}{\W}$
should be understood similarly.
They are just copies of $\W$ defined in (\ref{Wdef}).

{\unitlength 0.1in
\begin{picture}( 17.7100,  18)(3,-40)
%
{\color[named]{Black}{%
\special{pn 8}%
\special{pa 3734 3274}%
\special{pa 5506 2630}%
\special{fp}%
}}%
%
{\color[named]{Black}{%
\special{pn 8}%
\special{pa 3896 3596}%
\special{pa 5666 2952}%
\special{fp}%
}}%
%
{\color[named]{Black}{%
\special{pn 8}%
\special{pa 3806 3436}%
\special{pa 5732 2736}%
\special{fp}%
}}%
%
{\color[named]{Black}{%
\special{pn 13}%
\special{pa 4314 3314}%
\special{pa 3848 3130}%
\special{da 0.070}%
\special{sh 1}%
\special{pa 3848 3130}%
\special{pa 3902 3172}%
\special{pa 3898 3150}%
\special{pa 3916 3136}%
\special{pa 3848 3130}%
\special{fp}%
}}%
%
{\color[named]{Black}{%
\special{pn 13}%
\special{pa 4024 3646}%
\special{pa 3978 3064}%
\special{da 0.070}%
\special{sh 1}%
\special{pa 3978 3064}%
\special{pa 3962 3132}%
\special{pa 3982 3118}%
\special{pa 4002 3130}%
\special{pa 3978 3064}%
\special{fp}%
}}%
%
{\color[named]{Black}{%
\special{pn 13}%
\special{pa 3936 3644}%
\special{pa 4306 3194}%
\special{da 0.070}%
\special{sh 1}%
\special{pa 4306 3194}%
\special{pa 4248 3232}%
\special{pa 4272 3234}%
\special{pa 4278 3258}%
\special{pa 4306 3194}%
\special{fp}%
}}%
%
{\color[named]{Black}{%
\special{pn 13}%
\special{pa 4756 3162}%
\special{pa 4290 2976}%
\special{da 0.070}%
\special{sh 1}%
\special{pa 4290 2976}%
\special{pa 4344 3020}%
\special{pa 4340 2996}%
\special{pa 4358 2982}%
\special{pa 4290 2976}%
\special{fp}%
}}%
%
{\color[named]{Black}{%
\special{pn 13}%
\special{pa 4466 3494}%
\special{pa 4420 2912}%
\special{da 0.070}%
\special{sh 1}%
\special{pa 4420 2912}%
\special{pa 4404 2980}%
\special{pa 4424 2966}%
\special{pa 4444 2978}%
\special{pa 4420 2912}%
\special{fp}%
}}%
%
{\color[named]{Black}{%
\special{pn 13}%
\special{pa 4378 3492}%
\special{pa 4748 3040}%
\special{da 0.070}%
\special{sh 1}%
\special{pa 4748 3040}%
\special{pa 4690 3080}%
\special{pa 4714 3082}%
\special{pa 4722 3104}%
\special{pa 4748 3040}%
\special{fp}%
}}%
%
{\color[named]{Black}{%
\special{pn 13}%
\special{pa 5586 2856}%
\special{pa 5120 2670}%
\special{da 0.070}%
\special{sh 1}%
\special{pa 5120 2670}%
\special{pa 5174 2714}%
\special{pa 5170 2690}%
\special{pa 5188 2676}%
\special{pa 5120 2670}%
\special{fp}%
}}%
%
{\color[named]{Black}{%
\special{pn 13}%
\special{pa 5294 3188}%
\special{pa 5248 2606}%
\special{da 0.070}%
\special{sh 1}%
\special{pa 5248 2606}%
\special{pa 5234 2674}%
\special{pa 5252 2660}%
\special{pa 5274 2672}%
\special{pa 5248 2606}%
\special{fp}%
}}%
%
{\color[named]{Black}{%
\special{pn 13}%
\special{pa 5208 3186}%
\special{pa 5578 2734}%
\special{da 0.070}%
\special{sh 1}%
\special{pa 5578 2734}%
\special{pa 5520 2774}%
\special{pa 5544 2776}%
\special{pa 5550 2798}%
\special{pa 5578 2734}%
\special{fp}%
}}%
%
{\color[named]{Black}{%
\special{pn 8}%
\special{pa 3726 3274}%
\special{pa 3698 3290}%
\special{pa 3642 3324}%
\special{pa 3616 3344}%
\special{pa 3590 3362}%
\special{pa 3568 3384}%
\special{pa 3548 3406}%
\special{pa 3530 3432}%
\special{pa 3514 3458}%
\special{pa 3500 3486}%
\special{pa 3488 3514}%
\special{pa 3476 3544}%
\special{pa 3466 3576}%
\special{pa 3450 3640}%
\special{pa 3442 3674}%
\special{pa 3436 3708}%
\special{pa 3428 3740}%
\special{pa 3428 3742}%
\special{fp}%
}}%
%
{\color[named]{Black}{%
\special{pn 8}%
\special{pa 3904 3596}%
\special{pa 3840 3608}%
\special{pa 3808 3612}%
\special{pa 3778 3616}%
\special{pa 3746 3618}%
\special{pa 3712 3620}%
\special{pa 3648 3620}%
\special{pa 3616 3616}%
\special{pa 3584 3610}%
\special{pa 3554 3604}%
\special{pa 3494 3580}%
\special{pa 3442 3548}%
\special{pa 3390 3508}%
\special{pa 3342 3464}%
\special{pa 3320 3440}%
\special{pa 3284 3404}%
\special{fp}%
}}%
%
{\color[named]{Black}{%
\special{pn 8}%
\special{pa 3806 3436}%
\special{pa 3776 3448}%
\special{pa 3748 3460}%
\special{pa 3688 3484}%
\special{pa 3658 3494}%
\special{pa 3568 3530}%
\special{pa 3418 3584}%
\special{pa 3388 3594}%
\special{pa 3358 3606}%
\special{pa 3298 3626}%
\special{pa 3266 3636}%
\special{pa 3236 3646}%
\special{pa 3220 3652}%
\special{fp}%
}}%
%
{\color[named]{Black}{%
\special{pn 8}%
\special{pa 3356 3436}%
\special{pa 3276 3398}%
\special{fp}%
\special{pa 3276 3398}%
\special{pa 3308 3478}%
\special{fp}%
}}%
%
{\color[named]{Black}{%
\special{pn 8}%
\special{pa 3412 3644}%
\special{pa 3438 3728}%
\special{fp}%
\special{pa 3438 3728}%
\special{pa 3476 3650}%
\special{fp}%
}}%
%
{\color[named]{Black}{%
\special{pn 8}%
\special{pa 3284 3596}%
\special{pa 3218 3654}%
\special{fp}%
\special{pa 3218 3654}%
\special{pa 3306 3656}%
\special{fp}%
}}%
\put(42.5700,-31.6900){\makebox(0,0)[lb]{$\alpha_1$}}%
\put(39.3500,-30.4800){\makebox(0,0)[lb]{$\beta_1$}}%
\put(38.3900,-31.4500){\makebox(0,0)[rb]{$\gamma_1$}}%
\put(43.4600,-28.7900){\makebox(0,0)[lb]{$\beta_2$}}%
\put(47.1600,-30.1600){\makebox(0,0)[lb]{$\alpha_2$}}%
\put(42.6500,-29.8400){\makebox(0,0)[rb]{$\gamma_2$}}%
\put(55.4500,-27.1000){\makebox(0,0)[lb]{$\alpha_n$}}%
\put(51.5900,-25.6500){\makebox(0,0)[lb]{$\beta_n$}}%
\put(50.8600,-26.6200){\makebox(0,0)[rb]{$\gamma_n$}}%
\put(31.3800,-36.6800){\makebox(0,0){5}}%
\put(32.2700,-33.7000){\makebox(0,0){4}}%
\put(34.2000,-38.3700){\makebox(0,0){6}}%
\put(8.4400,-33.7800){\makebox(0,0)[rb]{$\gamma_1$}}%
\put(19.1500,-26.9400){\makebox(0,0)[lb]{$\alpha_2$}}%
\put(13.8300,-31.6100){\makebox(0,0)[rb]{$\gamma_2$}}%
\put(25.4300,-24.7700){\makebox(0,0)[lb]{$\alpha_n$}}%
\put(20.9200,-29.2000){\makebox(0,0)[rb]{$\gamma_n$}}%
\put(3.6100,-34.1100){\makebox(0,0){4}}%
\put(5.3000,-37.1700){\makebox(0,0){6}}%
%
{\color[named]{Black}{%
\special{pn 13}%
\special{pa 916 3412}%
\special{pa 1288 2960}%
\special{da 0.070}%
\special{sh 1}%
\special{pa 1288 2960}%
\special{pa 1230 3000}%
\special{pa 1254 3002}%
\special{pa 1260 3024}%
\special{pa 1288 2960}%
\special{fp}%
}}%
%
{\color[named]{Black}{%
\special{pn 13}%
\special{pa 1218 3526}%
\special{pa 1170 2910}%
\special{da 0.070}%
\special{sh 1}%
\special{pa 1170 2910}%
\special{pa 1154 2978}%
\special{pa 1174 2962}%
\special{pa 1194 2974}%
\special{pa 1170 2910}%
\special{fp}%
}}%
%
{\color[named]{Black}{%
\special{pn 13}%
\special{pa 1304 3492}%
\special{pa 860 3316}%
\special{da 0.070}%
\special{sh 1}%
\special{pa 860 3316}%
\special{pa 914 3358}%
\special{pa 910 3336}%
\special{pa 928 3322}%
\special{pa 860 3316}%
\special{fp}%
}}%
%
{\color[named]{Black}{%
\special{pn 13}%
\special{pa 1488 3202}%
\special{pa 1896 2706}%
\special{da 0.070}%
\special{sh 1}%
\special{pa 1896 2706}%
\special{pa 1838 2746}%
\special{pa 1862 2748}%
\special{pa 1868 2770}%
\special{pa 1896 2706}%
\special{fp}%
}}%
%
{\color[named]{Black}{%
\special{pn 13}%
\special{pa 1820 3328}%
\special{pa 1768 2650}%
\special{da 0.070}%
\special{sh 1}%
\special{pa 1768 2650}%
\special{pa 1752 2718}%
\special{pa 1772 2702}%
\special{pa 1792 2714}%
\special{pa 1768 2650}%
\special{fp}%
}}%
%
{\color[named]{Black}{%
\special{pn 13}%
\special{pa 1914 3290}%
\special{pa 1426 3098}%
\special{da 0.070}%
\special{sh 1}%
\special{pa 1426 3098}%
\special{pa 1480 3140}%
\special{pa 1476 3118}%
\special{pa 1494 3104}%
\special{pa 1426 3098}%
\special{fp}%
}}%
%
{\color[named]{Black}{%
\special{pn 13}%
\special{pa 2164 2960}%
\special{pa 2534 2510}%
\special{da 0.070}%
\special{sh 1}%
\special{pa 2534 2510}%
\special{pa 2476 2548}%
\special{pa 2500 2550}%
\special{pa 2508 2574}%
\special{pa 2534 2510}%
\special{fp}%
}}%
%
{\color[named]{Black}{%
\special{pn 13}%
\special{pa 2466 3076}%
\special{pa 2418 2458}%
\special{da 0.070}%
\special{sh 1}%
\special{pa 2418 2458}%
\special{pa 2402 2526}%
\special{pa 2422 2512}%
\special{pa 2442 2524}%
\special{pa 2418 2458}%
\special{fp}%
}}%
%
{\color[named]{Black}{%
\special{pn 13}%
\special{pa 2552 3040}%
\special{pa 2108 2864}%
\special{da 0.070}%
\special{sh 1}%
\special{pa 2108 2864}%
\special{pa 2162 2908}%
\special{pa 2158 2884}%
\special{pa 2176 2870}%
\special{pa 2108 2864}%
\special{fp}%
}}%
%
{\color[named]{Black}{%
\special{pn 8}%
\special{pa 2744 2526}%
\special{pa 450 3362}%
\special{fp}%
\special{sh 1}%
\special{pa 450 3362}%
\special{pa 520 3358}%
\special{pa 500 3344}%
\special{pa 506 3320}%
\special{pa 450 3362}%
\special{fp}%
}}%
%
{\color[named]{Black}{%
\special{pn 8}%
\special{pa 2890 2856}%
\special{pa 594 3692}%
\special{fp}%
\special{sh 1}%
\special{pa 594 3692}%
\special{pa 664 3688}%
\special{pa 644 3674}%
\special{pa 650 3650}%
\special{pa 594 3692}%
\special{fp}%
}}%
%
{\color[named]{Black}{%
\special{pn 8}%
\special{pa 1150 3298}%
\special{pa 462 3550}%
\special{fp}%
\special{sh 1}%
\special{pa 462 3550}%
\special{pa 530 3546}%
\special{pa 512 3532}%
\special{pa 518 3508}%
\special{pa 462 3550}%
\special{fp}%
}}%
%
{\color[named]{Black}{%
\special{pn 8}%
\special{pa 2934 2662}%
\special{pa 2496 2822}%
\special{fp}%
}}%
%
{\color[named]{Black}{%
\special{pn 8}%
\special{pa 1762 3074}%
\special{pa 1246 3262}%
\special{fp}%
}}%
%
{\color[named]{Black}{%
\special{pn 8}%
\special{pa 2390 2848}%
\special{pa 1846 3046}%
\special{fp}%
}}%
\put(12.7100,-29.2800){\makebox(0,0)[lb]{$\alpha_1$}}%
\put(17.7800,-26.0600){\makebox(0,0)[rb]{$\beta_2$}}%
\put(12.3800,-28.5500){\makebox(0,0)[rb]{$\beta_1$}}%
\put(24.4600,-24.2000){\makebox(0,0)[rb]{$\beta_n$}}%
%
{\color[named]{Black}{%
\special{pn 8}%
\special{pa 2744 2518}%
\special{pa 2808 2510}%
\special{pa 2872 2504}%
\special{pa 2904 2502}%
\special{pa 2968 2502}%
\special{pa 3000 2504}%
\special{pa 3032 2508}%
\special{pa 3064 2514}%
\special{pa 3124 2534}%
\special{pa 3154 2546}%
\special{pa 3182 2558}%
\special{pa 3240 2588}%
\special{pa 3260 2598}%
\special{fp}%
}}%
%
{\color[named]{Black}{%
\special{pn 8}%
\special{pa 2890 2848}%
\special{pa 2954 2824}%
\special{pa 2984 2812}%
\special{pa 3014 2798}%
\special{pa 3042 2782}%
\special{pa 3066 2764}%
\special{pa 3088 2744}%
\special{pa 3108 2720}%
\special{pa 3124 2692}%
\special{pa 3138 2664}%
\special{pa 3162 2602}%
\special{pa 3174 2572}%
\special{pa 3194 2512}%
\special{pa 3206 2482}%
\special{pa 3214 2458}%
\special{fp}%
}}%
%
{\color[named]{Black}{%
\special{pn 8}%
\special{pa 2938 2654}%
\special{pa 3000 2636}%
\special{pa 3060 2616}%
\special{pa 3090 2604}%
\special{pa 3174 2562}%
\special{pa 3230 2530}%
\special{pa 3258 2512}%
\special{pa 3284 2494}%
\special{pa 3312 2478}%
\special{pa 3324 2470}%
\special{fp}%
}}%
\put(3.8500,-35.6400){\makebox(0,0){5}}%
\put(30.9000,-30.8900){\makebox(0,0){$=$}}%
\end{picture}}%

Here the broken arrows represent either 
usual arrow or dotted arrow depending on whether 
the corresponding $\epsilon_i$ is 0 or 1 and accordingly 
whether 
${\mathscr S}^{(\epsilon_i)}$ is ${\mathscr R}$ or ${\mathscr L}$.

The argument so far is just a 3D analogue of the well known 
fact in 2D that a single site relation $RLL = LLR$ for a local $L$ operator 
implies a similar relation for the $n$-site monodromy matrix
in the quantum inverse scattering method.

\subsection{Reduction to Yang-Baxter equation}\label{ss:Rybe}

In the $n$-layer tetrahedron equation (\ref{TEn}), 
the space $\overset{4}{F}\otimes 
\overset{5}{F}\otimes 
\overset{6}{F}$
will be referred to as {\em auxiliary space}.
One can reduce (\ref{TEn})  to
the Yang-Baxter equation by evaluating 
the auxiliary space away appropriately.
One natural way is to take the {\em trace} of (\ref{TEn})  
over the auxiliary space
after left multiplication of 
$x^{{\bf h}_4}(xy)^{{\bf h}_5}y^{{\bf h}_6}$ 
and right multiplication of $\Rm^{-1}_{4,5,6}$ \cite{BS}.
Another way is to evaluate (\ref{TEn}) 
between the boundary vectors 
$\langle \chi_s(x)| \otimes \langle \chi_s(xy)| \otimes \langle \chi_s(y)|$ 
and $|\chi_t\rangle \otimes|\chi_t\rangle \otimes|\chi_t\rangle \; 
(s,t=1,2)$ in (\ref{cv}) 
by regarding them as belonging to the 
auxiliary space and its dual\footnote{
Using $|\chi_t(x')\rangle \otimes
|\chi_t(x'y')\rangle \otimes
|\chi_t(y')\rangle $ 
just leads to a redefinition of $x,y$ due to 
(\ref{rz}) and (\ref{ice0}).} \cite{KS}.
By using  (\ref{rz}) and (\ref{cv})  it is easy to see that 
the result reduces to the Yang-Baxter equation
\begin{align}\label{sybe}
S_{\boldsymbol{\alpha, \beta}}(x)
S_{\boldsymbol{\alpha, \gamma}}(xy)
S_{\boldsymbol{\beta,\gamma}}(y)
=
S_{\boldsymbol{\beta,\gamma}}(y)
S_{\boldsymbol{\alpha, \gamma}}(xy)
S_{\boldsymbol{\alpha, \beta}}(x)
\in \mathrm{End}(
\overset{\boldsymbol\alpha}{\W}\otimes
\overset{\boldsymbol\beta}{\W}\otimes
\overset{\boldsymbol\gamma}{\W})
\end{align}
for the matrix 
$S_{\boldsymbol{\alpha, \beta}}(z)
\in \mathrm{End}
(\overset{\boldsymbol\alpha}{\W}\otimes \overset{\boldsymbol\beta}{\W})$
constructed as (cf. \cite[sec.~VIII]{S09} and \cite[sec.~5]{KO2})
\begin{align}
S_{\boldsymbol{\alpha, \beta}}(z)
&=\varrho(z)\mathrm{Tr}_3\left(z^{{\bf h}_3}
{\mathscr S}^{(\epsilon_1)}_{\alpha_1, \beta_1, 3}
\cdots
{\mathscr S}^{(\epsilon_n)}_{\alpha_n, \beta_n, 3}\right)
\quad \,\;\;\;(\text{trace}),
\label{sdef0}\\
&=\varrho(z)\langle \chi_s|z^{{\bf h}_3/s}
{\mathscr S}^{(\epsilon_1)}_{\alpha_1, \beta_1, 3}
\cdots
{\mathscr S}^{(\epsilon_n)}_{\alpha_n, \beta_n, 3}
|\chi_t\rangle
\quad (\text{evaluation by boundary vectors}), \label{sdef}
\end{align}
where the scalar $\varrho(z)$ is inserted to control the 
normalization.
The trace or evaluation by boundary vectors are done 
with respect to the auxiliary Fock space $F = \overset{3}{F}$ 
signified by 3. 
 
To express the matrix elements 
of $S_{\boldsymbol{\alpha, \beta}}(z)$ uniformly,
we introduce the following notation for the basis of $\W$ (\ref{Wdef}): 
\begin{align}
&\W = \bigoplus_{m_1,\ldots, m_n} \!\!\C
|m_1,\ldots, m_n\rangle, \qquad
|m_1,\ldots, m_n\rangle = |m_1\rangle^{(\epsilon_1)}
\otimes \cdots \otimes 
|m_n\rangle^{(\epsilon_n)},\label{syk}\\
&|m\rangle^{(0)} = |m\rangle \in F\;\;(m \in \Z_{\ge 0}),\qquad
|m\rangle^{(1)} = v_m \in V\; \; (m\in \{0,1\}).\label{kby}
\end{align}
The range of the indices $m_i$ are to be understood as 
$\Z_{\ge 0}$ or $\{0,1\}$ according to $\epsilon_i=0$ or $1$ as in 
(\ref{kby}). 
It will crudely be denoted by $0 \le m_i \le 1/\epsilon_i$.
We use the shorthand 
$|{\bf m}\rangle = |m_1,\ldots, m_n\rangle$
for ${\bf m}=(m_1,\ldots, m_n)$ and 
write (\ref{syk}) as $\W= \bigoplus_{\bf m} \C |{\bf m}\rangle$.
In particular $|{\bf 0}\rangle$ with 
${\bf 0}:=(0,\ldots, 0)$ denotes the vacuum vector.
We set $|{\bf m}| = m_1+\cdots + m_n$.
In the later sections (e.g. Sections \ref{sec:IrreA} and \ref{sec:IrreD}) where 
the distinction between $F$ and $V$ is clear from the context,
we will denote $v_0$ and $v_1$ also by $|0\rangle$
and $|1\rangle$.

Let $S^{\mathrm{tr}}(z)$ and 
$S^{s,t}(z) \in \mathrm{End}(\W \otimes \W)$
denote the solutions (\ref{sdef0}) and (\ref{sdef}) 
of the Yang-Baxter equation, 
where the inessential labels 
$\boldsymbol{\alpha}, \boldsymbol{\beta}$
are now suppressed.
Their actions are described as 
\begin{align}
&S^{\mathrm{tr}}(z)\bigl(|{\bf i}\rangle \otimes |{\bf j}\rangle\bigr)
= \sum_{{\bf a},{\bf b}}
S^{\mathrm{tr}}(z)^{{\bf a},{\bf b}}_{{\bf i},{\bf j}}
|{\bf a}\rangle \otimes |{\bf b}\rangle,
\quad
S^{s,t}(z)\bigl(|{\bf i}\rangle \otimes |{\bf j}\rangle\bigr)
= \sum_{{\bf a},{\bf b}}
S^{s,t}(z)^{{\bf a},{\bf b}}_{{\bf i},{\bf j}}
|{\bf a}\rangle \otimes |{\bf b}\rangle,
\label{sact}
\end{align}
with the matrix elements constructed as
\begin{align}
&S^{\mathrm{tr}}(z)^{{\bf a},{\bf b}}_{{\bf i},{\bf j}}
=\varrho(z)\!\!\sum_{c_0, \ldots, c_{n-1}}\!\!
z^{c_0}
{\mathscr S}^{(\epsilon_1)\, a_1, b_1, c_0}_{
\phantom{(\epsilon_1)}\,i_1, j_1, c_1}
{\mathscr S}^{(\epsilon_2)\, a_2, b_2, c_1}_{
\phantom{(\epsilon_2)}\,i_2, j_2, c_2}
\cdots
{\mathscr S}^{(\epsilon_{n-1})\,a_{n\!-\!1}, b_{n\!-\!1}, c_{n\!-\!2}}_{
\phantom{(\epsilon_{n-1})}\,i_{n\!-\!1}, j_{n\!-\!1}, c_{n\!-\!1}}
{\mathscr S}^{(\epsilon_n)\, a_n, b_n, c_{n\!-\!1}}_{
\phantom{(\epsilon_n)}i_n, j_n, c_0},\label{sabij0}\\
&S^{s,t}(z)^{{\bf a},{\bf b}}_{{\bf i},{\bf j}}
=\varrho(z)\!\!\sum_{c_0, \ldots, c_n}\!\!
\frac{z^{c_0}(q^2)_{sc_0}}{(q^{s^2})_{c_0}(q^{t^2})_{c_n}}
{\mathscr S}^{(\epsilon_1)\,a_1, b_1, sc_0}_{
\phantom{(\epsilon_1)}\,i_1, j_1, c_1}
{\mathscr S}^{(\epsilon_2)\,a_2, b_2, c_1}_{
\phantom{(\epsilon_2)}\,i_2, j_2, c_2}\cdots
{\mathscr S}^{(\epsilon_{n-1})\,a_{n\!-\!1}, b_{n\!-\!1}, c_{n\!-\!2}}_{
\phantom{(\epsilon_{n-1})}\,i_{n\!-\!1}, j_{n\!-\!1}, c_{n\!-\!1}}
{\mathscr S}^{(\epsilon_n)\,a_n, b_n, c_{n\!-\!1}}_{
\phantom{(\epsilon_n)}\,i_n, j_n, tc_n}.\label{sabij}
\end{align} 
The factor $(q^2)_{sc_0}$ in (\ref{sabij}) originates in 
$\langle m | m' \rangle = \delta_{m,m'}(q^2)_m$.
See Section \ref{subsec2.1}.
From (\ref{ice0}) it follows that
\begin{align}
&S^{\mathrm{tr}}(z)^{{\bf a},{\bf b}}_{{\bf i},{\bf j}}= 0 \;\;
\text{unless}\;\; \;{\bf a} + {\bf b} = {\bf i} + {\bf j} \;\;
\text{and}\;\;  |{\bf a}| = |{\bf i}|,\;
|{\bf b}| = |{\bf j}|,\label{iceA}\\
&S^{s,t}(z)^{{\bf a},{\bf b}}_{{\bf i},{\bf j}} = 0\;\;
\text{unless}\;\; \;{\bf a} + {\bf b} = {\bf i} + {\bf j}\quad (1\le s,t\le 2),
\label{iceD}\\
&S^{2,2}(z)^{{\bf a},{\bf b}}_{{\bf i},{\bf j}}= 0\;\;
\text{unless}\;\;  |{\bf a}| \equiv|{\bf i}|,\;
|{\bf b}| \equiv |{\bf j}|\mod 2.\label{ice22}
\end{align}
Given such 
${\bf a},{\bf b},{\bf i}$ and ${\bf j}$, (\ref{ice0}) effectively reduces 
the sums over $c_i \in \Z_{\ge 0}$ in both (\ref{sabij0}) and 
(\ref{sabij}) into a {\em single} sum.
The latter property in (\ref{iceA}) implies the 
direct sum decomposition:
\begin{align}
S^{\mathrm{tr}}(z) = \bigoplus_{l,m \ge 0}  S^{\mathrm{tr}}_{l,m}(z),\quad
S^{\mathrm{tr}}_{l,m}(z) \in \mathrm{End}(\W_l \otimes \W_m),\quad
\W_l = \bigoplus_{{\bf m}, |{\bf m}|=l}\C|{\bf m}\rangle \subset \W,
\label{wl}
\end{align}
where the former sum ranges over $0 \le l, m\le n$ if 
$\epsilon_1\cdots \epsilon_n = 1$ and 
$l,m \in \Z_{\ge 0}$ otherwise. 
Similarly $S^{2,2}(z)$ decomposes into four components due to (\ref{ice22}).
The normalization factor $\varrho(z)$ can be taken depending on the components 
and will be specified in Section \ref{ss:ex}.

Assign a solid arrow to $F$ and 
a dotted arrow to $V$, and 
depict the matrix elements of 3D $R$ and 3D $L$ as 
\[
\begin{picture}(200,49)(-90,-35)
\put(-122,-20){$\Rm^{a,b,c}_{i,j,k}=$}
\put(-30,-10){\vector(-3,-1){40}}
\put(-78,-27){$\scriptstyle{c}$}
\put(-48,-16){\vector(0,1){20}}\put(-48,-16){\line(0,-1){16}}
\put(-48,-16){\vector(3,-1){18}} \put(-48,-16){\line(-3,1){18}}
\put(-51,7){$\scriptstyle{b}$}
\put(-72,-10){$\scriptstyle{i}$}
\put(-27,-26){$\scriptstyle{a}$}
\put(-50,-39){$\scriptstyle{j}$}
\put(-27,-12){$\scriptstyle{k}$}
\put(130,-12){
\put(-88,-7){${\mathscr L}^{a,b,c}_{i,j,k}=$}
\put(3,1){\vector(-3,-1){40}}
\multiput(-15,-18)(0,3){10}{.}\put(-13.5,12.6){\vector(0,1){1}}
\multiput(-30,0)(3,-1){11}{.}\put(5,-10.6){\vector(3,-1){1}}
\put(-44,-14){$\scriptstyle{c}$}
\put(-37,0){$\scriptstyle{i}$}
\put(-16,17){$\scriptstyle{b}$}
\put(-16,-26){$\scriptstyle{j}$}
\put(9,-14){$\scriptstyle{a}$}
\put(6,0){$\scriptstyle{k}$}
}
\end{picture}
\]
Then the construction (\ref{sabij0}) and 
(\ref{sabij}) are depicted as
\[
\begin{picture}(200,100)(-210,-55)


\put(-220,0){
\put(3,1){\vector(-3,-1){73}}
\put(-20,-53){$\scriptstyle{S^{\mathrm{tr}}(z)^{{\bf a},{\bf b}}_{{\bf i},{\bf j}}}$}

\qbezier(-70,-23)(-120,-30)(-65,6)
\qbezier(-65,6)(-20,29)(2,33)
\qbezier(2,33)(27,40)(50,40.3)
\qbezier(50,40.3)(103,41)(56,18.7)

\put(-70,-30){$\scriptstyle{c_0}$}
 
\multiput(-48,-32)(0,6){6}{\put(0,0){\line(0,1){4}}}
\multiput(-31,-22)(-6,2){6}{\put(0,0){\line(-3,1){3}}}

\put(-51,6){$\scriptstyle{b_1}$}\put(-48,3){\vector(0,1){1}}
\put(-74,-10){$\scriptstyle{i_1}$}
\put(-30,-29){$\scriptstyle{a_1}$}\put(-29,-23){\vector(3,-1){1}}
\put(-50,-39){$\scriptstyle{j_1}$}

\put(-30,-15){$\scriptstyle{c_1}$}

\multiput(-15,-18)(0,6){5}{\put(0,0){\line(0,1){4}}}
\multiput(1,-10)(-6,2){5}{\put(0,0){\line(-3,1){3}}}

\put(-36,0){$\scriptstyle{i_2}$}
\put(-18,17){$\scriptstyle{b_2}$}\put(-15,13){\vector(0,1){1}}
\put(-17,-25){$\scriptstyle{j_2}$}
\put(4,-16){$\scriptstyle{a_2}$}\put(4,-11){\vector(3,-1){1}}

\put(2,-4){$\scriptstyle{c_2}$}

\multiput(5.1,1.7)(3,1){7}{.} 
\put(6,2){
\put(21,7){\line(3,1){30}}

\multiput(35,1)(0,6){4}{\put(0,0){\line(0,1){4}}}
\multiput(46,8)(-6,2){4}{\put(0,0){\line(-3,1){3}}}

\put(15,16){$\scriptstyle{i_n}$}
\put(51,3){$\scriptstyle{a_n}$}\put(49,7){\vector(3,-1){1}}
\put(33,29){$\scriptstyle{b_n}$}\put(35,26){\vector(0,1){1}}
\put(34,-7){$\scriptstyle{j_n}$}

\put(16,-1){$\scriptstyle{c_{n-1}}$}

}

}


\put(3,1){\vector(-3,-1){73}}
\put(-20,-53){$\scriptstyle{S^{s,t}(z)^{{\bf a},{\bf b}}_{{\bf i},{\bf j}}}$}
\put(-115,-27){$\scriptstyle{\langle \chi_s(z) |}$}

\put(-83,-27){$\scriptstyle{sc_0}$}
 
\multiput(-48,-32)(0,6){6}{\put(0,0){\line(0,1){4}}}
\multiput(-31,-22)(-6,2){6}{\put(0,0){\line(-3,1){3}}}

\put(-51,8){$\scriptstyle{b_1}$}\put(-48,4){\vector(0,1){1}}
\put(-74,-10){$\scriptstyle{i_1}$}
\put(-30,-29){$\scriptstyle{a_1}$}\put(-29,-23){\vector(3,-1){1}}
\put(-50,-39){$\scriptstyle{j_1}$}

\put(-30,-15){$\scriptstyle{c_1}$}

\multiput(-15,-18)(0,6){5}{\put(0,0){\line(0,1){4}}}
\multiput(1,-10)(-6,2){5}{\put(0,0){\line(-3,1){3}}}

\put(-36,0){$\scriptstyle{i_2}$}
\put(-18,17){$\scriptstyle{b_2}$}\put(-15,13){\vector(0,1){1}}
\put(-17,-25){$\scriptstyle{j_2}$}
\put(4,-16){$\scriptstyle{a_2}$}\put(4,-11){\vector(3,-1){1}}

\put(2,-4){$\scriptstyle{c_2}$}

\multiput(5.1,1.7)(3,1){7}{.} 
\put(6,2){
\put(21,7){\line(3,1){30}}

\multiput(35,1)(0,6){4}{\put(0,0){\line(0,1){4}}}
\multiput(46,8)(-6,2){4}{\put(0,0){\line(-3,1){3}}}

\put(15,16){$\scriptstyle{i_n}$}
\put(51,3){$\scriptstyle{a_n}$}\put(49,7){\vector(3,-1){1}}
\put(33,29){$\scriptstyle{b_n}$}\put(35,26){\vector(0,1){1}}
\put(34,-7){$\scriptstyle{j_n}$}

\put(16,-1){$\scriptstyle{c_{n-1}}$}

\put(53,17){$\scriptstyle{tc_n}$}
}
 
\put(75,19){$\scriptstyle{|\chi_t(1) \rangle}$}
 
\end{picture}
\]
Here the broken arrows designate either 
solid or dotted arrows according to 
$\epsilon_i = 0$ or $1$ at the corresponding site.
Thus (\ref{sabij0}) and (\ref{sabij}) may be regarded as 
a ``matrix product construction"  of 
$S^{\mathrm{tr}}(z)$ and $S^{s,t}(z)$ in terms of 
3D $R$ and 3D $L$ with the auxiliary space $F$.

\subsection{Examples: Normalization of
$S^{\mathrm{tr}}(z)$ and $S^{1,1}(z)$ }\label{ss:ex}
Set 
\begin{align*}
{\bf e}_i = (0,\ldots, 0,\overset{i}{1},0,\ldots, 0)
\in \Z^n,\qquad
{\bf e}_{> m} = {\bf e}_{m+1} + \cdots + {\bf e}_n\;\; \;(1 \le m \le n).
\end{align*}
We calculate some typical matrix elements from 
(\ref{sabij0}) and (\ref{sabij}).
As the first example we consider 
$S^{\mathrm{tr}}_{l,m}(z)\, (0 \le l,m \le n)$ with 
$\epsilon_1\cdots \epsilon_n = 1$.
We normalize it as
\begin{align}\label{norm1}
S^{\mathrm{tr}}_{l,m}(z)\bigl(|{\bf e}_{> n-l}\rangle \otimes | {\bf e}_{> n-m}\rangle\bigr) = 
|{\bf e}_{>n-l}\rangle \otimes | {\bf e}_{>n-m}\rangle.
\end{align}
By the definition (\ref{sabij0})  the relevant matrix element is calculated as 
\begin{align*}
S^{\mathrm{tr}}_{l,m}(z)^{{\bf e}_{> n-l}, {\bf e}_{>n-m}}_{{\bf e}_{>n-l}, {\bf e}_{>n-m}}
= \varrho(z)\sum_{c\ge 0}z^c
({\mathscr L}^{0,0,c}_{0,0,c})^{n-\max(l,m)}
({\mathscr L}^{1,0,c}_{1,0,c})^{(l-m)_+}
({\mathscr L}^{0,1,c}_{0,1,c})^{(m-l)_+}
({\mathscr L}^{1,1,c}_{1,1,c})^{\min(l,m)},
\end{align*}
where $(x)_+ = \max(x,0)$.
Thus from (\ref{Lex}) we find that the condition (\ref{norm1}) leads to 
the choice $\varrho(z) = (-q)^{-(m-l)_+}(1-q^{|l-m|}z)$.

As the second example we consider 
$S^{\mathrm{tr}}_{l,m}(z)\, (l,m \in \Z_{\ge 0})$ with 
$\epsilon_1\cdots \epsilon_n = 0$.
We pick {\em any} $i$ such that $\epsilon_i=0$ 
and normalize it as
\begin{align}\label{norm2}
S^{\mathrm{tr}}_{l,m}(z) \bigl(|l{\bf e}_i \rangle \otimes |m{\bf e}_i\rangle \bigr) = 
|l{\bf e}_i \rangle \otimes |m{\bf e}_i\rangle.
\end{align}
The relevant matrix element reads
$1=S^{\mathrm{tr}}_{l,m}(z)^{l{\bf e}_i, m{\bf e}_i}_{l{\bf e}_i, m{\bf e}_i}=
\varrho(z)\sum_{c\ge 0}z^c {\mathscr R}^{l,m,c}_{l,m,c}$.
It is an easy exercise using (\ref{Rex}) to show that
this gives
$\varrho(z) = 
\frac{z^{-m}(q^{l-m}z;q^2)_{m+1}}{(q^{l-m+2}z^{-1};q^2)_m}$.
Moreover this result is independent of the choice of such $i$. 

As the last example we consider $S^{1,1}(z)$ and normalize it as
\begin{align}\label{norm3}
S^{1,1}(z)\bigl( |{\bf 0}\rangle \otimes |{\bf 0}\rangle \bigr) = |{\bf 0}\rangle \otimes |{\bf 0}\rangle.
\end{align}
The relevant matrix element reads
$1=S^{1,1}(z)^{{\bf 0}, {\bf 0}}_{{\bf 0},{\bf 0}}
= \varrho(z) \sum_{c\ge 0}\frac{z^c(q^2)_c}{(q)^2_c}$.
Thus we take
$\varrho(z) = \frac{(z;q)_\infty}{(-qz;q)_\infty}$.
Under the normalization specified by (\ref{norm1})--(\ref{norm3}),
all the matrix elements are {\em rational} in $z$ and $q$.

\subsection{Equivalence relations}\label{ss:er}

Let us write $S^{\mathrm{tr}}(z)$ and $S^{s,t}(z)$ in 
(\ref{sact}) as 
$S^{\mathrm{tr}}(z|\epsilon_1,\ldots, \epsilon_n)$
and 
$S^{s,t}(z|\epsilon_1,\ldots, \epsilon_n)$
when their dependence on 
$(\epsilon_1,\ldots, \epsilon_n) \in \{0,1\}^n$ in (\ref{Wdef})
is to be emphasized.
The following fact was briefly mentioned in \cite[sec.~VIII]{S09}.

\begin{proposition}\label{pr:equiv}
Suppose $(\epsilon_1,\ldots, \epsilon_n) \in \{0,1\}^n$ is a permutation of  
$(\epsilon'_1,\ldots, \epsilon'_n)$.
Let ${\mathcal  W}$ and ${\mathcal W}'$ be the spaces 
(\ref{Wdef}) associated to them.
Then there is an invertible linear map 
$\Phi: {\mathcal W}\otimes {\mathcal W} \rightarrow 
{\mathcal W}'\otimes  {\mathcal W}'$
such that 
\begin{align*}
&\Phi \,S^{\mathrm{tr}}(z|\epsilon_1,\ldots, \epsilon_n) = 
S^{\mathrm{tr}}(z|\epsilon'_1,\ldots, \epsilon'_n) \,\Phi,
\qquad
\Phi \, S^{s,t}(z|\epsilon_1,\ldots, \epsilon_n) = 
S^{s,t}(z|\epsilon'_1,\ldots, \epsilon'_n) \,\Phi.
\end{align*}
\end{proposition}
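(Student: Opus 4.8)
The plan is to reduce the general permutation to adjacent transpositions, and for a single adjacent transposition to exhibit $\Phi$ explicitly as a local operator acting only on the two tensor factors $W^{(\epsilon_i)}\otimes W^{(\epsilon_{i+1})}$ in each copy of $\W$ that get swapped. Since the symmetric group $S_n$ is generated by the transpositions $(i,i{+}1)$, and since a composite of the desired intertwiners (with the intermediate spaces $\W''$, etc.) is again such an intertwiner, it suffices to treat $\epsilon'=(\ldots,\epsilon_{i+1},\epsilon_i,\ldots)$ obtained from $\epsilon$ by swapping the $i$-th and $(i{+}1)$-th entries. If $\epsilon_i=\epsilon_{i+1}$ there is nothing to do (take $\Phi=\mathrm{id}$), so the real content is the case $\{\epsilon_i,\epsilon_{i+1}\}=\{0,1\}$, i.e. interchanging a local $F$ with an adjacent local $V$.

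The key observation driving the construction of $\Phi$ is that in the matrix-product formulas (\ref{sabij0}) and (\ref{sabij}), the tensor factors of $\W$ enter only through the ordered product ${\mathscr S}^{(\epsilon_1)}_{\cdots,3}\cdots{\mathscr S}^{(\epsilon_n)}_{\cdots,3}$ along the auxiliary Fock space $\overset{3}{F}$. Swapping $\epsilon_i$ and $\epsilon_{i+1}$ amounts to swapping the order of the two consecutive operators ${\mathscr S}^{(\epsilon_i)}_{\alpha_i,\beta_i,3}$ and ${\mathscr S}^{(\epsilon_{i+1})}_{\alpha_{i+1},\beta_{i+1},3}$ — which do not commute. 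So the strategy is: find a local invertible operator $P$ on $W^{(\epsilon_i)}\otimes W^{(\epsilon_{i+1})}\otimes F$ (acting on slots $\alpha_i,\alpha_{i+1},3$, and identically with $\beta$'s) such that
\begin{align*}
{\mathscr S}^{(\epsilon_i)}_{\alpha_i,\beta_i,3}\,{\mathscr S}^{(\epsilon_{i+1})}_{\alpha_{i+1},\beta_{i+1},3}
= P^{-1}\,{\mathscr S}^{(\epsilon_{i+1})}_{\alpha_i,\beta_i,3}\,{\mathscr S}^{(\epsilon_i)}_{\alpha_{i+1},\beta_{i+1},3}\,P
\end{align*}
in a form where $P$ factorizes as $P_{\alpha}\,P_{\beta}$ with each factor acting only on one copy of $\W$ and commuting with the auxiliary $\overset{3}{F}$ in the sense needed to pass through the trace (or through $\langle\chi_s|\cdots|\chi_t\rangle$ and the $z^{{\bf h}_3}$ insertion). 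Then $\Phi$ is the product over the swapped slots of these local factors, and the intertwining relations for $S^{\mathrm{tr}}$ and $S^{s,t}$ follow by cyclicity of the trace (respectively by $P$ being trivial on the boundary vectors, which it is since it acts on $W$-slots, not on $\overset{3}{F}$). The operator $P$ itself should be read off from the explicit matrix elements (\ref{Rex}) and (\ref{Lex}): concretely I expect it to be close to a permutation/identity, with the nontrivial content being the $F\leftrightarrow V$ bookkeeping — note the excerpt already flags in Section \ref{ss:er} a "similarity transformation exchanging $F\otimes V$ and $V\otimes F$ locally", which is precisely this $P$.

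The main obstacle is establishing the local commutation-up-to-conjugation identity above when one of the two operators is $\Rm$ (acting on $F\otimes F\otimes F$) and the other is ${\mathscr L}$ (acting on $V\otimes V\otimes F$): these act on genuinely different spaces, so "swapping them" requires first making precise what $W^{(\epsilon_i)}\otimes W^{(\epsilon_{i+1})}$ becomes after the swap and verifying that the conservation law (\ref{ice0}) constrains things enough that a single local operator $P$ does the job for all matrix elements simultaneously. I would attack this by writing out both sides on basis vectors $v_\alpha\otimes|i\rangle\otimes|m\rangle$ (for the $VF\to FV$ case) using (\ref{Lex}), (\ref{Rex}), and the recursions from Appendix A of \cite{KO3}, and solving for the entries of $P$; the fact that ${\mathscr L}$ has only six nonzero blocks should keep this finite and tractable. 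Once the one-step identity holds with $P=P_\alpha P_\beta$ local, the reduction to general permutations via $S_n$ generators and composition of the $\Phi$'s is routine, using that a product of intertwiners $\W\to\W''\to\W'$ is an intertwiner $\W\to\W'$.
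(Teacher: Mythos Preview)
Your reduction to adjacent transpositions and the overall strategy of finding a local intertwiner are correct. However, there is a genuine structural gap in your ansatz for $P$.

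You posit that $P$ acts on $W^{(\epsilon_i)}\otimes W^{(\epsilon_{i+1})}\otimes \overset{3}{F}$ and factors as $P_\alpha P_\beta$, with each piece living on a single copy of $\W$ (plus the auxiliary). Both assumptions are wrong. The intertwiner the paper produces acts on the \emph{four} spaces $\alpha_i,\beta_i,\alpha_{i+1},\beta_{i+1}$ (two $V$'s and two $F$'s), does \emph{not} touch the auxiliary $\overset{3}{F}$ at all, and does \emph{not} factor as a tensor of identical pieces over the $\alpha$- and $\beta$-components of $\W$; its matrix elements genuinely couple the two copies. Because it avoids the auxiliary space entirely, there is nothing to check about compatibility with the trace or with the boundary vectors---the problem you anticipate simply does not arise.

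The key idea you are missing is that this intertwiner comes for free from the tetrahedron equation itself. Trace the $RLLL$ relation (\ref{RLLL}) over the first $V$-space to get
\[
\phi_{2,3,4,5}\,{\mathscr L}_{2,3,6}{\mathscr R}_{4,5,6}={\mathscr R}_{4,5,6}{\mathscr L}_{2,3,6}\,\phi_{2,3,4,5},\qquad
\phi_{2,3,4,5}=\mathrm{Tr}_1\bigl({\mathscr L}_{1,2,4}{\mathscr L}_{1,3,5}\bigr).
\]
Relabeling $(2,3,4,5,6)\to(\alpha_i,\beta_i,\alpha_{i+1},\beta_{i+1},\mathrm{aux})$, this is exactly the swap ${\mathscr L}{\mathscr R}\leftrightarrow{\mathscr R}{\mathscr L}$ along the auxiliary direction, conjugated by an operator on the $\W\otimes\W$ slots only. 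The only remaining work is checking that $\phi$ is invertible, which the paper does by computing $\phi^2$ explicitly as a diagonal matrix in the $V\otimes V$ indices with nonvanishing $q$-oscillator entries on $F\otimes F$. Your proposed brute-force route of solving for the entries of $P$ from (\ref{Rex}) and (\ref{Lex}) would, if you dropped the factorization ansatz, eventually lead to this same $\phi$, but it is far cleaner to read it off the tetrahedron equation directly.
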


\begin{proof}
In the $RLLL=LLLR$ relation (\ref{RLLL}), 
take the trace over the space $\overset{1}{V}$.
The result reads
\begin{align*}
\phi_{2,3,4,5} \, {\mathscr L}_{2,3,6}{\mathscr R}_{4,5,6} = 
{\mathscr R}_{4,5,6}{\mathscr L}_{2,3,6}\, \phi_{2,3,4,5},
\end{align*}
where $\phi_{2,3,4,5}= 
\mathrm{Tr}_1\bigl({\mathscr L}_{1,2,4}{\mathscr L}_{1,3,5}\bigr)$.
It may be regarded as a linear map 
$\phi: (\overset{2}{V}\otimes \overset{4}{F}) \otimes 
(\overset{3}{V}\otimes \overset{5}{F})
\rightarrow
(\overset{4}{F}\otimes \overset{2}{V})\otimes 
(\overset{5}{F}\otimes \overset{3}{V})$.
When represented as 
$\phi (\overset{2}{v}_\alpha \otimes \overset{3}{v}_\beta)
= \sum_{\alpha',\beta'} \phi^{\alpha',\beta'}_{\alpha,\beta} 
\overset{2}{v}_{\alpha'} \otimes \overset{3}{v}_{\beta'}$
in terms of the four by four matrix 
$(\phi^{\alpha',\beta'}_{\alpha,\beta})$
with elements from $\mathrm{End}(\overset{4}{F}\otimes \overset{5}{F})$,
it looks as 
\begin{align*}
(\phi^{\alpha',\beta'}_{\alpha,\beta})=\begin{pmatrix}
1+{\bf k}_4{\bf k}_5 & 0 & 0 & 0\\
0 & {\bf k}_4 - q{\bf k}_5 & {\bf a}^+_4{\bf a}^-_5 & 0\\
0 & {\bf a}^+_5{\bf a}^-_4 & {\bf k}_5-q{\bf k}_4 & 0 \\
0 & 0 & 0 & 1+q^2{\bf k}_4{\bf k}_5
\end{pmatrix},
\end{align*}
where $(\alpha,\beta)=(0,0),(0,1),(1,0),(1,1)$ from the left to the right and 
similarly for $(\alpha',\beta')$ from the top to the bottom.
It is easy to check that the square of this matrix equals
$\mathrm{diag}((1+{\bf k}_4{\bf k}_5)^2,(1-q{\bf k}_4{\bf k}_5)^2, 
(1-q{\bf k}_4{\bf k}_5)^2, (1+q^2{\bf k}_4{\bf k}_5)^2)$.
Since the spectrum of ${\bf k}$ is $q^{\Z_{\ge 0}}$, $\phi$
is invertible.
It follows that reversing the product of ${\mathscr L}$ and ${\mathscr R}$ 
in $\overset{6}{F}$ is equivalent to a similarity transformation 
in the other spaces by $\phi$.
Applying this observation to (\ref{sdef0}) and (\ref{sdef}), we have
\begin{align*}
(\cdots \otimes 1\otimes \phi
\otimes 1 \otimes \cdots) S^{\mathrm{tr}}(z|\ldots,1,0,\ldots)
&= S^{\mathrm{tr}}(z|\ldots,0,1,\ldots)
(\cdots \otimes 1\otimes \phi
\otimes 1 \otimes \cdots),\\
(\cdots \otimes 1\otimes \phi
\otimes 1 \otimes \cdots) S^{s,t}(z|\ldots,1,0,\ldots)
&= S^{s,t}(z|\ldots,0,1,\ldots)
(\cdots \otimes 1\otimes \phi
\otimes 1 \otimes \cdots) 
\end{align*}
whenever there are consecutive $1,0$ or $0,1$ in $(\epsilon_1,\ldots, \epsilon_n)$.
Repeating this transposition one can convert  
$(\epsilon_1,\ldots, \epsilon_n)$ into $(\epsilon'_1,\ldots, \epsilon'_n)$.
By denoting the composition of the corresponding $\phi$'s by $\Phi$,
the assertion follows.
\end{proof}

Proposition \ref{pr:equiv} reduces the 
study of the $2^n$-families 
$S^{\mathrm{tr}}(z|\epsilon_1,\ldots, \epsilon_n)$
and 
$S^{s,t}(z|\epsilon_1,\ldots, \epsilon_n)$
to the $(n\!+\!1)$-families
\begin{align*}
S^{\mathrm{tr}}(z|\overbrace{1,\ldots,1}^\kappa, \overbrace{0,\ldots,0}^{n-\kappa}),
\quad
S^{s,t}(z|\overbrace{1,\ldots,1}^\kappa, \overbrace{0,\ldots,0}^{n-\kappa})
\qquad (0 \le \kappa \le n).
\end{align*}

\subsection{Results on homogeneous cases and present work}\label{ss:kr}

Let us temporarily suppress $z$ in $S^{\mathrm{tr}}(z)$ and $S^{s,t}(z)$ in 
(\ref{sact}) and write them as 
$S^{\mathrm{tr}}(\epsilon_1,\ldots, \epsilon_n)$
and 
$S^{s,t}(\epsilon_1,\ldots, \epsilon_n)$.
Let $U_{q}({\mathfrak g})$ be a quantum affine algebra and let 
$R_{U_q({\mathfrak g})}(M\otimes M')$ denote the 
quantum $R$ matrix acting on the 
$U_{q}({\mathfrak g})$-module $M\otimes M'$.
Known results concern the homogeneous cases 
$\epsilon_1=\cdots = \epsilon_n=0, 1$.
Leaving minor technical remarks aside\footnote{
For example, a slight gauge adjustment is necessary as in (\ref{ksk}).}, 
they are stated in the present convention as follows 
(`rep'  means representation).
\begin{align}
S^{\mathrm{tr}}(0,\ldots, 0)
&= \bigoplus_{l,m\ge 0} R_{U_q(A^{(1)}_{n-1})}\!(V_l\otimes V_m),\quad
V_l = \text{$l$-symmetric tensor rep},
\label{tr0}\\
S^{\mathrm{tr}}(1,\ldots, 1)
&= \bigoplus_{0 \le l,m\le n} \!\!
R_{U_{-q^{-1}}(A^{(1)}_{n-1})}\!(V^l\otimes V^m),\quad
V^l = \text{$l$-anti-symmetric tensor rep} ,
\label{tr1}\\
S^{1,1}(1,\ldots, 1)
&= R_{U_{-q^{-1}}(D^{(2)}_{n+1})}\!(V_{sp}\otimes V_{sp}),\quad
V_{sp} = \text{spin rep},
\label{11L}\\
S^{2,1}(1,\ldots, 1)
&= R_{U_{-q^{-1}}(B^{(1)}_{n})}(V_{sp}\otimes V_{sp}),\quad
V_{sp} = \text{spin rep},
\label{21L}\\
S^{2,2}(1,\ldots, 1)
&= R_{U_{-q^{-1}}(D^{(1)}_{n})}(V_{sp}\otimes V_{sp}),\quad
V_{sp} = (\text{spin rep}) \oplus \sigma(\text{spin rep}),
\label{22L}\\
S^{1,1}(0,\ldots, 0)
&= R_{U_q(D^{(2)}_{n+1})}\!(V_{osc}\otimes V_{osc}),\quad
V_{osc} = \text{$q$-oscillator rep},
\label{11R}\\
S^{1,2}(0,\ldots, 0)
&= R_{U_q(A^{(2)}_{2n})}\!(V_{osc}\otimes V_{osc}),\quad
V_{osc} = \text{$q$-oscillator rep},
\label{12R}\\
S^{2,2}(0,\ldots, 0)
&= R_{U_q(C^{(1)}_{n})}\!(V_{osc}\otimes V_{osc}),\quad
V_{osc} = (\text{$q$-osc. rep\!$^+$}) \oplus 
(\text{$q$-osc. rep\!$^-$}).
\label{22R}
\end{align}
The result (\ref{tr0}) is stated in  \cite{BS} where
$V_l \simeq \W_l$ (\ref{wl}) as a vector space.
See also \cite[Appendix B]{KO3} for a proof.
The result (\ref{tr1}) where $V^l \simeq \W_l$ (\ref{wl}) 
has not been stated explicitly in the literature
and it will be covered as a special case in this paper.
The results (\ref{11L})--(\ref{22L}) are due to 
\cite{KS} where $V_{sp} \simeq \W=V^{\otimes n}$.
The $\sigma$ in (\ref{22L}) is the order 2 Dynkin diagram automorphism 
of $D_n$.
See \cite[Remark 7.2]{KS} for $S^{1,2}(1,\ldots, 1)$.
The results (\ref{11R})--(\ref{22R})  and 
the $q$-oscillator representations are obtained in \cite{KO3}
where $V_{osc} \simeq \W = F^{\otimes n}$.
In (\ref{22R}) `$q$-osc. rep$^\pm$' 
denotes the even and odd irreducible sub-representations 
of $V_{osc}$ in \cite[eq.(2.20)]{KO3}.
Such a parity decomposition can also be inferred from (\ref{ice22}).
As for $S^{2,1}(0,\ldots, 0)$, it is reducible to  
$S^{1,2}(0,\ldots, 0)$ by \cite[eq.~(2.16)]{KO3}.

Inhomogeneous cases of $\epsilon_i$'s, i.e.  
{\em mixture} of $\Rm$ and $\mathscr{L}$, was first proposed in 
\cite[sec.~VIII]{S09} for the trace construction and in  
{\cite[sec.~5]{KO2} including the boundary vector construction.
These works manifested that the full problem is much larger
than the homogeneous case and indicated possible connections to  
quantum superalgebras.

This paper is the first systematic study on 
$S^{\mathrm{tr}}(\epsilon_1,\ldots, \epsilon_n)$ and 
$S^{1,1}(\epsilon_1,\ldots, \epsilon_n)$ for general 
inhomogeneous case 
$(\epsilon_1,\ldots, \epsilon_n) \in \{0,1 \}^n$.
The latter is a representative example of the boundary 
vector construction 
$S^{s,t}(\epsilon_1,\ldots, \epsilon_n)$.
The other cases $(s,t)\neq (1,1)$ 
are not included in this paper to avoid complexity of the presentation.

It is not the most essential problem 
nor our primary concern to seek a closed formula
for the matrix elements (\ref{sabij0}) and (\ref{sabij}) by manipulating 
the multiple sum therein.
(See Section \ref{ss:ex} and  Examples \ref{ex:r10}--\ref{ex:rB} however.)
Our main interest lies in the
{\em characterization} of 
$S^{\mathrm{tr}}(\epsilon_1,\ldots, \epsilon_n)$ and 
$S^{1,1}(\epsilon_1,\ldots, \epsilon_n)$
by a quantum group like object in the sense similar to the 
usual $R$ matrices characterized by $U_q({\mathfrak g})$ \cite{D86,Ji}.
As a guide to what will happen, compare 
the two homogeneous cases of $S^{1,1}(z)$ 
in (\ref{11L}) and (\ref{11R}) where
the spin representation of $U_{-q^{-1}}(D^{(2)}_{n+1})$ 
and the $q$-oscillator representations of $U_q(D^{(2)}_{n+1})$
are linked.
Thus it is not only the representation but also the 
algebra itself that are interpolated
with various choices of $(\epsilon_1,\ldots, \epsilon_n)$.
We will show that the resulting family of  
algebras offer examples of generalized 
quantum groups \cite{H,HY} 
which include a class of quantum superalgebras.

\section{Generalized quantum groups and quantum $R$ matrices}\label{sec:gg}

\subsection{Hopf algebras ${\mathcal U}_A(\epsilon_1,\ldots, \epsilon_n)$
and ${\mathcal U}_B(\epsilon_1,\ldots, \epsilon_n)$}
\label{subsec:U}

Set
\begin{align}\label{qidef}
p= \mathrm{i}q^{-\frac{1}{2}},\quad
q_i = \begin{cases}
q & \epsilon_i = 0,\\
-q^{-1} & \epsilon_i = 1,
\end{cases}\;\; (1 \le i \le n),\quad
\tilde{n}= \begin{cases}
n-1 & \; \text{for}\;\; {\mathcal U}_A(\epsilon_1,\ldots, \epsilon_n),\\
n & \; \text{for}\;\; {\mathcal U}_B(\epsilon_1,\ldots, \epsilon_n),
\end{cases}
\end{align}
where $\mathrm{i}=\sqrt{-1}$ and 
$\epsilon_i =0,1$ according to (\ref{Wdef}).
We assume $\tilde{n}\ge 1$ and often write
${\mathcal U}_A(\epsilon_1,\ldots, \epsilon_n)$
and 
${\mathcal U}_B(\epsilon_1,\ldots, \epsilon_n)$ 
as ${\mathcal U}_A$ and ${\mathcal U}_B$ for short.
When considering ${\mathcal U}_A$ all the indices (like $i$ in (\ref{qidef})) 
are to be understood as belonging to $\Z/n\Z$. 
We prepare the constants 
$(D_{i,j})_{0 \le i,j \le \tilde{n}}$ and $(r_i)_{0 \le i \le \tilde{n}}$:
\begin{align}
D_{i,j}&=D_{j,i}=
\prod_{k\in \langle i \rangle \cap \langle j \rangle}
q_k^{2\delta_{i,j}-1},
\qquad
\langle i \rangle = \begin{cases}
\{i,i+1\}\;\;\;& \text{for}\; \;{\mathcal U}_A, \nonumber\\
\{i,i+1\}\cap [1,n]\;\;\;& \text{for}\;\;  {\mathcal U}_B,
\end{cases}\\
r_i &  = q \;\;\text{for}\; \;{\mathcal U}_A,
\qquad 
r_i =\begin{cases}
p & i=0, n,\\
q & 0<i<n
\end{cases}\;\;\text{for}\; \;{\mathcal U}_B.\label{Dr}
\end{align}

\begin{example}
For ${\mathcal U}_A(\epsilon_1,\epsilon_2)$ and 
${\mathcal U}_A(\epsilon_1,\epsilon_2,\epsilon_3)$ one has
\begin{align*}
(D_{i,j})_{0\le i,j\le 1}= \begin{pmatrix}
q_1q_2 & q_1^{-1}q_2^{-1}\\
q_1^{-1}q_2^{-1} & q_1q_2
\end{pmatrix},
\qquad
(D_{i,j})_{0\le i,j\le 2} = \begin{pmatrix}
q_3q_1 & q^{-1}_1 & q^{-1}_3 \\
q^{-1}_1 & q_1q_2 & q^{-1}_2\\
q^{-1}_3 & q^{-1}_2 & q_2q_3 
\end{pmatrix},
\end{align*}
where the top left element is $D_{0,0}$.
Similarly for ${\mathcal U}_B(\epsilon_1,\epsilon_2,\epsilon_3)$ one has
\begin{align*}
(D_{i,j})_{0\le i,j\le 3} = \begin{pmatrix}
q_1 & q^{-1}_1 & 1 & 1\\
q^{-1}_1 & q_1q_2 & q^{-1}_2 & 1\\
1 & q^{-1}_2 & q_2q_3 & q^{-1}_3\\
1 & 1 & q^{-1}_3 & q_3
\end{pmatrix},
\qquad
(r_i)_{0 \le i \le 3} = (p,q,q, p).
\end{align*}
\end{example}

Let ${\mathcal U}_A$ and ${\mathcal U}_B$  
be the $\C(q^{\frac{1}{2}})$-algebras  
generated by $e_i, f_i, k^{\pm 1}_i\,(0 \le i \le \tilde{n})$
obeying the relations
\begin{equation}\label{urel}
\begin{split}
&k_i k^{-1}_i = k^{-1}_ik_i = 1,\quad [k_i, k_j]=0,\\
&k_i e_j = D_{i,j}e_j k_i,\quad k_i f_j = D_{i,j}^{-1}f_j k_i,\quad
[e_i,f_j] = \delta_{i,j}\frac{k_i-k^{-1}_i}{r_i-r^{-1}_i}\quad 
(0 \le i,j  \le \tilde{n}).
\end{split}
\end{equation}
They are Hopf algebras 
with coproduct $\Delta$, counit $\varepsilon$ and 
antipode ${\mathcal S}$ given by
\begin{align}
&\Delta k^{\pm 1}_i = k^{\pm 1}_i\otimes k^{\pm 1}_i,\quad
\Delta e_i = 1\otimes e_i + e_i \otimes k_i,\quad
\Delta f_i = f_i\otimes 1 + k^{-1}_i\otimes f_i, \label{Delta}\\
&\varepsilon(k_i) = 1, \quad \varepsilon(e_i) = \varepsilon(f_i) = 0,\quad
{\mathcal S}(k^{\pm 1}_i) = k_i^{\mp 1},\quad
{\mathcal S}(e_i)=-e_ik^{-1}_i, \quad {\mathcal S}(f_i) =  -k_if_i. \nonumber
\end{align}

With a supplement of appropriate Serre relations, 
the homogeneous cases are identified with the quantum affine algebras 
\cite{D86,Ji} as
\begin{equation}\label{equiv}
\begin{split}
{\mathcal U}_A(0,\ldots, 0) &= U_q(A^{(1)}_{n-1}),\quad
{\mathcal U}_A(1,\ldots, 1) = U_{-q^{-1}}(A^{(1)}_{n-1}),\\
{\mathcal U}_B(0,\ldots, 0) &= U_q(D^{(2)}_{n+1}),\quad
{\mathcal U}_B(1,\ldots, 1)= U_{-q^{-1}}(D^{(2)}_{n+1}).
\end{split}
\end{equation}
In the bottom left case, one actually needs to re-scale 
$f_0$ and $e_n$ by the factor $\mathrm{i}\frac{q+1}{q-1}$.
In the bottom right case, the choice of the branch 
$(-q^{-1})^{1/2}= p$ is assumed.
In general ${\mathcal U}_A(\epsilon_1,\ldots, \epsilon_n)$ 
and ${\mathcal U}_B(\epsilon_1,\ldots, \epsilon_n)$ are 
examples of generalized quantum groups \cite{H,HY}.
We let $\overline{\mathcal U}_A(\epsilon_1,\ldots, \epsilon_n)$ and 
$\overline{\mathcal U}_B(\epsilon_1,\ldots, \epsilon_n)$ denote the 
subalgebras of ${\mathcal U}_A(\epsilon_1,\ldots, \epsilon_n)$
and ${\mathcal U}_B(\epsilon_1,\ldots, \epsilon_n)$
without involving $e_0, f_0$ and $k^{\pm 1}_0$.

\subsection{Representation $\pi_x$}

Recall that $\W_l$ is defined 
by (\ref{wl}), (\ref{syk}) and (\ref{kby}).
Let $x$ be a parameter. 
\begin{proposition}\label{pr:relA}
The map 
$\pi_x: {\mathcal U}_A(\epsilon_1,\ldots, \epsilon_n)
 \rightarrow \mathrm{End}(\W_{l})$ 
defined by\footnote{
Image $\pi_x(g)$ is denoted by $g$ for simplicity.} 
\begin{equation}\label{actsA}
\begin{split}
e_i|{\bf m}\rangle&
= x^{\delta_{i,0}}[m_i]|{\bf m}-{\bf e}_i+{\bf e}_{i+1}\rangle,\\
f_i|{\bf m}\rangle&
= x^{-\delta_{i,0}}[m_{i+1}]|{\bf m}+{\bf e}_i-{\bf e}_{i+1}\rangle,\\
k_i|{\bf m}\rangle&
= (q_i)^{-m_i}(q_{i+1})^{m_{i+1}}|{\bf m}\rangle
\end{split}
\end{equation}
for $i \in \Z/n\Z$ is an irreducible representation,
where $0 \le l \le n$ if $\epsilon_1\cdots\epsilon_n=1$ and 
$l \in \Z_{\ge 0}$ otherwise.
\end{proposition}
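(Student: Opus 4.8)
The plan is to verify Proposition~\ref{pr:relA} in two separate parts: first that the formulas \eqref{actsA} genuinely define an algebra homomorphism from $\mathcal{U}_A(\epsilon_1,\ldots,\epsilon_n)$ to $\mathrm{End}(\W_l)$, and second that the resulting module is irreducible. For the homomorphism part, I would simply substitute \eqref{actsA} into each defining relation \eqref{urel}. The relations $k_ik_i^{-1}=1$ and $[k_i,k_j]=0$ are immediate since all $k_i$ act diagonally in the basis $|\mathbf{m}\rangle$. For $k_ie_j = D_{i,j}e_jk_i$ one computes the eigenvalue of $k_i$ on $|\mathbf{m}\rangle$ versus on $e_j|\mathbf{m}\rangle = |\mathbf{m}-\mathbf{e}_j+\mathbf{e}_{j+1}\rangle$; the ratio is $(q_j)^{\mp 1}(q_{j+1})^{\pm1}$ up to the shift at $j$ and $j+1$, and one checks this equals $D_{i,j}$ using the explicit product formula $D_{i,j}=\prod_{k\in\langle i\rangle\cap\langle j\rangle}q_k^{2\delta_{i,j}-1}$ — this is a short finite case check on whether $j,j+1\in\{i,i+1\}$. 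The relation $k_if_j=D_{i,j}^{-1}f_jk_i$ follows symmetrically. The one relation requiring genuine computation is $[e_i,f_j]=\delta_{i,j}\frac{k_i-k_i^{-1}}{r_i-r_i^{-1}}$: for $i\ne j$ the operators act on disjoint or commuting coordinate shifts, so one checks the shifts commute (the only subtle case is $|i-j|=1$, where $e_i$ and $f_j$ touch a common coordinate, but a direct computation with the $[m]$ coefficients shows the commutator vanishes); for $i=j$ one gets $e_if_i - f_ie_i$ acting on $|\mathbf{m}\rangle$ equal to $\bigl([m_{i+1}+1][m_{i+1}] - [m_i+1]\cdot\text{(something)}\bigr)$-type expression, which after using the standard identity $[a][a+1]-[b][b+1]=\ldots$ collapses to $\frac{(q_i)^{-m_i}(q_{i+1})^{m_{i+1}} - (q_i)^{m_i}(q_{i+1})^{-m_{i+1}}}{r_i - r_i^{-1}}$; here one uses $r_i=q$ for $\mathcal{U}_A$ and that $q_i\in\{q,-q^{-1}\}$ so that $[m]_{q_i}$ relates to $[m]_q$ appropriately. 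I would isolate the needed $q$-number identity as a preliminary lemma.

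For irreducibility, the plan is the standard highest/lowest-weight cyclicity argument adapted to the affine (cyclic index) setting. Fix a nonzero submodule $M\subseteq\W_l$. Since all $k_i$ act diagonally with distinct joint eigenvalues on distinct weight spaces — here one must check the weight map $\mathbf{m}\mapsto\bigl((q_i)^{-m_i}(q_{i+1})^{m_{i+1}}\bigr)_i$ separates basis vectors within $\W_l$ (true for generic $q$, using $|\mathbf{m}|=l$ fixed; this separation is itself a small lemma exploiting that the exponents are determined up to the global constraint), $M$ is spanned by a subset of the basis $\{|\mathbf{m}\rangle\}$. Then I would show that from any single $|\mathbf{m}\rangle$ one can reach every other $|\mathbf{m}'\rangle$ with $|\mathbf{m}'|=l$ by repeatedly applying the $e_i,f_i$: since $[m_i]\ne 0$ whenever $m_i\ge 1$ (generic $q$, and for $\epsilon_i=1$ the only nonzero case is $m_i=1$ giving $[1]=1$), the operator $e_i$ moves a "unit of mass" from site $i$ to site $i+1$ whenever $m_i\ge 1$, and $f_i$ moves it back; because the index runs over $\Z/n\Z$ these moves act transitively on the set $\{\mathbf{m}: m_i\ge 0 \text{ (or }\in\{0,1\}), |\mathbf{m}|=l\}$ — one can always shuttle mass around the cycle to realize any target configuration, the only care needed being the capacity constraint $m_i\le 1$ at sites with $\epsilon_i=1$, which is handled by routing mass around a full site rather than through it (possible since $\tilde n\ge 1$, equivalently $n\ge 2$, and $l\le n$ when $\epsilon_1\cdots\epsilon_n=1$ guarantees a legal configuration always exists). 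Hence $M=\W_l$.

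The main obstacle I expect is the irreducibility argument in the presence of the fermionic sites ($\epsilon_i=1$, capacity $m_i\le1$): one must verify that the "mass-shuttling" moves generated by $e_i,f_i$ really do connect any two admissible configurations without ever violating a capacity bound along the way, and that no intermediate coefficient $[m_i]$ vanishes so the moves are actually invertible on the relevant weight spaces. When $\epsilon_1\cdots\epsilon_n=1$ (all sites fermionic) this is a statement about the transitivity of a natural action on $\{0,1\}$-sequences of fixed weight $l\le n$ arranged on a cycle, which is true but needs a clean combinatorial argument — I would phrase it as: any two such sequences are connected by "adjacent transposition of a $1$ and a $0$", and each such transposition is exactly an $e_i$ or $f_i$ move (with coefficient $\pm1$, hence invertible). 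For the mixed case the same idea works with bosonic sites serving as buffers. A secondary minor obstacle is bookkeeping the index arithmetic modulo $n$ and confirming that the cyclic relation $k_0e_0 = D_{0,0}e_0k_0$ (the "affine node") is consistent with the convention $\mathbf{e}_{n+1}=\mathbf{e}_1$ built into \eqref{actsA}; this is purely a matter of unwinding definitions. I would structure the written proof as: (1) a $q$-number lemma, (2) verification of \eqref{urel}, (3) a weight-separation lemma, (4) the transitivity lemma, (5) conclusion.
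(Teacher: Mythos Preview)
Your proposal is correct and follows the same approach that the paper indicates: the paper's entire proof of Proposition~\ref{pr:relA} is the sentence ``Proposition \ref{pr:relA} and \ref{pr:relD} can be directly checked,'' with the irreducibility hint (given only for Proposition~\ref{pr:relD}) being the cyclicity $\W = {\mathcal U}_B|{\bf 0}\rangle$ and $|{\bf 0}\rangle \in {\mathcal U}_B|{\bf m}\rangle$ for any ${\bf m}$. Your plan supplies the details the paper omits: direct verification of \eqref{urel}, weight separation for generic $q$, and transitivity of the $e_i,f_i$-moves on the basis of $\W_l$. One small caution worth flagging for when you write it up: in verifying $[e_i,f_i]=\frac{k_i-k_i^{-1}}{q-q^{-1}}$, the naive identity $[m_{i+1}][m_i+1]-[m_i][m_{i+1}+1]=[m_{i+1}-m_i]$ is \emph{not} what one checks at a fermionic site, because when $\epsilon_{i+1}=1$ and $m_{i+1}=1$ the vector $|{\bf m}-{\bf e}_i+{\bf e}_{i+1}\rangle$ is zero by the convention stated after Proposition~\ref{pr:relD}, so $f_ie_i|{\bf m}\rangle=0$ rather than $[m_i][m_{i+1}+1]|{\bf m}\rangle$; the relation then matches because the $k_i$-eigenvalue side also changes form in the mixed $q_i\neq q_{i+1}$ case, but this needs a short case analysis rather than a single $q$-identity.
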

\begin{proposition}\label{pr:relD}
The map 
$\pi_x: {\mathcal U}_B(\epsilon_1,\ldots, \epsilon_n) 
\rightarrow \mathrm{End}(\W)$ 
defined by
\begin{equation}\label{actsD}
\begin{split}
e_0|{\bf m}\rangle &
= x|{\bf m}+{\bf e}_1\rangle,\\
f_0|{\bf m}\rangle &
= x^{-1}[m_1]
|{\bf m}-{\bf e}_1\rangle,\\
k_0|{\bf m}\rangle&
= p^{-1}(q_1)^{m_1}|{\bf m}\rangle,\\
e_i|{\bf m}\rangle&
= [m_i]|{\bf m}-{\bf e}_i+{\bf e}_{i+1}\rangle\quad
(0<i<n),\\
f_i|{\bf m}\rangle&
= [m_{i+1}]|{\bf m}+{\bf e}_i-{\bf e}_{i+1}\rangle
\quad(0<i<n),\\
k_i|{\bf m}\rangle&
= (q_i)^{-m_i}(q_{i+1})^{m_{i+1}}|{\bf m}\rangle
\quad (0<i<n),\\
e_n|{\bf m}\rangle&
= [m_n]
|{\bf m}-{\bf e}_n\rangle,\\
f_n|{\bf m}\rangle &
= |{\bf m}+{\bf e}_n\rangle,\\
k_n|{\bf m}\rangle &
= p(q_n)^{-m_n}|{\bf m}\rangle
\end{split}
\end{equation}
is an irreducible representation.
\end{proposition}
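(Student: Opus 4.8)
The plan is to verify Proposition~\ref{pr:relD} in two stages: first that the formulas \eqref{actsD} define an algebra homomorphism ${\mathcal U}_B(\epsilon_1,\ldots,\epsilon_n) \to \mathrm{End}(\W)$, and second that the resulting module is irreducible. For the homomorphism part one must check that the proposed action respects all the defining relations \eqref{urel}. The $[k_i,k_j]=0$ and $k_ik_i^{-1}=1$ relations are immediate since every $k_i$ acts diagonally in the basis $|{\bf m}\rangle$. For $k_ie_j = D_{i,j}e_je_i$ (and the analogous one with $f_j$), I would compute both sides on $|{\bf m}\rangle$: the left side picks up the $k_i$-eigenvalue on $|{\bf m}-{\bf e}_j+{\bf e}_{j+1}\rangle$ (with the appropriate boundary modifications when $j=0$ or $j=n$), the right side the eigenvalue on $|{\bf m}\rangle$ times $D_{i,j}$, and the ratio is a product of powers of $q_k$ that one matches against the definition of $D_{i,j}$ in \eqref{Dr} using $\langle i\rangle = \{i,i+1\}\cap[1,n]$; the factors of $p$ attached to $k_0$ and $k_n$ must be tracked carefully here. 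The relation $[e_i,f_j]=\delta_{i,j}(k_i-k_i^{-1})/(r_i-r_i^{-1})$ is the substantive one: for $i\neq j$ one checks $e_if_j$ and $f_je_i$ act identically (the index shifts commute unless they overlap), and for $i=j$ one computes $e_if_i - f_ie_i$ on $|{\bf m}\rangle$, getting a diagonal operator whose eigenvalue one must identify with $(k_i-k_i^{-1})/(r_i-r_i^{-1})$; for the bulk indices $0<i<n$ this reduces to the familiar $U_q(\mathfrak{sl}_2)$-type identity $[m_i][m_{i+1}+1]-[m_i-1][m_{i+1}]$-style cancellation after using $[a][b+1]-[a-1][b] = [a-b]\cdot(\text{something})$-type manipulation, while for $i=0$ and $i=n$ the oscillator-like formulas $e_0|{\bf m}\rangle = x|{\bf m}+{\bf e}_1\rangle$, $f_n|{\bf m}\rangle = |{\bf m}+{\bf e}_n\rangle$ produce the nontrivial check that the $p$-normalizations in $k_0$ and $k_n$ and the choice $r_0=r_n=p$ are exactly what makes $[e_0,f_0]$ and $[e_n,f_n]$ come out right. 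There are also Serre-type relations implicit in a presentation of a generalized quantum group; I would remark that since \eqref{urel} as written lists only the relations we impose and the Serre relations (if needed) hold automatically on this module because consecutive index shifts either commute or satisfy a two-term nilpotency that one verifies directly on basis vectors.

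For irreducibility, the plan is to show that any nonzero submodule $M \subseteq \W$ must equal $\W$. The key observation is that the $k_i$ act semisimply with the joint spectrum separating the weight spaces $\C|{\bf m}\rangle$ sufficiently — more precisely, I would argue that the weight of $|{\bf m}\rangle$ under the torus generated by $k_1,\ldots,k_n$ (together with the grading information one can extract) determines ${\bf m}$ up to a controllable ambiguity, so that $M$, being $k$-stable, is spanned by a subset of the $|{\bf m}\rangle$. Then one shows this subset is closed under none of the raising/lowering operators unless it is everything: starting from any $|{\bf m}\rangle \in M$, repeated application of $f_n$ raises $m_n$ without bound, $f_0$ (up to the invertible scalar $x^{-1}[m_1]$, nonzero when $m_1\geq 1$) and $e_0$ move $m_1$, and the bulk $e_i,f_i$ for $0<i<n$ redistribute mass between adjacent coordinates exactly as in the $A_{n-1}$ case; combining these one reaches $|{\bf 0}\rangle$ and then climbs back out to every $|{\bf m}\rangle$. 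The cleanest route is probably: (a) show $|{\bf 0}\rangle \in M$ by applying a suitable monomial in the $e_i$'s and $f_i$'s that strictly decreases a norm like $|{\bf m}|$ or a lexicographic statistic until it cannot anymore — noting that the only obstruction, a coefficient $[m_i]$ vanishing, does not occur for $m_i\geq 1$ since $q$ is generic; (b) show $\W = {\mathcal U}_B\cdot|{\bf 0}\rangle$ by exhibiting, for each ${\bf m}$, an explicit monomial (e.g.\ a product of $f_n$'s to build up $m_n$, then bulk $e_i$'s to shuffle, then $e_0$'s to build up $m_1$) sending $|{\bf 0}\rangle$ to a nonzero multiple of $|{\bf m}\rangle$.

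The main obstacle I anticipate is in the irreducibility argument, specifically handling the infinite-dimensionality: when any $\epsilon_i = 0$ the space $\W$ is infinite-dimensional, so one cannot invoke finite-dimensional highest-weight theory, and one must be genuinely careful that the joint $k$-spectrum distinguishes basis vectors (it need not be injective as a function of ${\bf m}$ because only $n$ torus generators act on what is effectively an $n$-parameter family, but there is additional grading from the structure — this needs a real argument, perhaps using that $f_n^{m_n}$ acts injectively to reduce to the $m_n=0$ sector and then induct) and that the "descent to ${\bf 0}$'' terminates. In the excerpt the authors themselves flag that they give a self-contained irreducibility proof only for $(\epsilon_1,\ldots,\epsilon_n)$ of the form $(1^\kappa,0^{n-\kappa})$ in Section~\ref{sec:IrreD}, so I would follow that reduction: by Proposition~\ref{pr:equiv} (or rather its underlying mechanism) it suffices to treat that canonical ordering, where the structure of $\W = V^{\otimes\kappa}\otimes F^{\otimes(n-\kappa)}$ is most transparent and the $q$-oscillator versus spin nature of each tensor factor is manifest, making both the weight-separation and the explicit-monomial steps tractable. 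A secondary, more clerical obstacle is simply bookkeeping the $p = \mathrm{i}q^{-1/2}$ factors and sign conventions $q_i = -q^{-1}$ when $\epsilon_i=1$ throughout the relation checks, where a single misplaced sign or branch choice would break the $[e_n,f_n]$ computation.
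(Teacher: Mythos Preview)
Your approach is essentially the paper's own: directly verify the relations \eqref{urel}, then for irreducibility show $|{\bf 0}\rangle \in {\mathcal U}_B|{\bf m}\rangle$ for every ${\bf m}$ and $\W = {\mathcal U}_B|{\bf 0}\rangle$. The paper dispatches Proposition~\ref{pr:relD} in a single sentence to exactly this effect, immediately after the statement.

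Two points where you are overcomplicating matters. First, the joint $k$-spectrum \emph{does} separate the basis vectors, with no ambiguity: from the eigenvalue $p(q_n)^{-m_n}$ of $k_n$ you read off $m_n$ (since $q_n\in\{q,-q^{-1}\}$ is not a root of unity for generic $q$); then from $k_{n-1}$ you get $m_{n-1}$, and so on down to $m_1$. So every weight space is one-dimensional and your worry about needing ``additional grading'' or an inductive reduction via $f_n^{m_n}$ is unnecessary.

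Second, and more importantly, you have conflated two different irreducibility statements. Section~\ref{sec:IrreD} proves irreducibility of the \emph{tensor product} $\W\otimes\W$ as a ${\mathcal U}_B$-module, which is genuinely harder and is indeed carried out only for $(\epsilon_1,\ldots,\epsilon_n)=(1^\kappa,0^{n-\kappa})$. Proposition~\ref{pr:relD} asserts only the irreducibility of $\W$ itself, which holds for arbitrary $(\epsilon_1,\ldots,\epsilon_n)\in\{0,1\}^n$ by the elementary argument you already outlined; no reduction via Proposition~\ref{pr:equiv} is needed. Finally, note that ${\mathcal U}_B$ as defined in \eqref{urel} carries \emph{no} Serre relations, so there is nothing further to check on that front.
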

We define $\pi_x(k_i^{-1})$ to be $\pi_x(k_i)^{-1}$.
In the rhs of (\ref{actsA}) and (\ref{actsD}), 
vectors $|{\bf m}'\rangle=|m'_1,\ldots, m'_n\rangle$ 
are to be understood as zero
unless $0 \le m'_i \le 1/\epsilon_i$ for all $1 \le i \le n$.
Thus for example in (\ref{actsD}) 
one has $e_0|{\bf e}_1\rangle = 0$ if $\epsilon_1=1$.
Similarly when
($\epsilon_i,\epsilon_{i+1})=(1,1)$, 
$e_i|{\bf m}\rangle$ with $0 < i < n$ is 
non-vanishing if and only if $(m_i,  m_{i+1})=(1,0)$.

Proposition \ref{pr:relA} and \ref{pr:relD} can be directly checked.
The irreducibility of (\ref{actsD}) is seen from 
$\W = {\mathcal U}_B|{\bf 0}\rangle$ and 
$|{\bf 0}\rangle \in  {\mathcal U}_B|{\bf m}\rangle$ for any 
${\bf m}$.

\begin{remark}\label{re:tanseki}
Up to the remark after (\ref{equiv}), 
the representations in Proposition \ref{pr:relA} and \ref{pr:relD}
reduce to the known ones 
in the homogeneous case $\epsilon_1=\cdots = \epsilon_n$:
\begin{align*}
\W_{l} &\simeq \text{$l$-fold symmetric tensor rep. of }   
U_q(A^{(1)}_{n-1})\;\;
\text{for}\;\;\epsilon_1=\cdots = \epsilon_n=0,\\
\W_{l} &\simeq \text{$l$-fold anti-symmetric tensor rep. of }   
U_{-q^{-1}}(A^{(1)}_{n-1}) \;\;
\text{for}\;\;\epsilon_1=\cdots = \epsilon_n=1,\\
\W &\simeq \text{$q$-oscillator rep. of }   U_q(D^{(2)}_{n+1})\;  \text{\cite{KO3}}
\;\; \text{for}\;\;\epsilon_1=\cdots = \epsilon_n=0, \\
\W &\simeq \text{spin rep. of }   U_{-q^{-1}}(D^{(2)}_{n+1})\; \text{\cite{KS}}
\;\; \text{for}\;\;\epsilon_1=\cdots = \epsilon_n=1.
\end{align*}
\end{remark}

\subsection{Relation with quantum superalgebras}\label{ss:qsa}

We adopt the convention in \cite{FSV} (see also \cite{Y91}) for the quantum
 superalgebras $A_q(m,m')$ and $B_q(m,m')$, which are related to the 
$q$-deformations of Lie superalgebras
$sl(m+1,m'+1)$ and $osp(2m+1,2m')$, respectively.

\subsubsection{$A_q$ and ${\mathcal U}_A$}\label{sss:AU}
We compare $A_{q^{1/2}}(\kappa-1,\kappa'-1)$ and
${\mathcal U} _A(0^\kappa,1^{\kappa'}) = {\mathcal U}_A
(\overbrace{0,\ldots,0}^\kappa,\overbrace{1,\ldots,1}^{\kappa'})$ 
with $\kappa + \kappa'=n$.
We assume $0< \kappa < n$.
As an illustration, consider $A_{q^{1/2}}(1,2)$
generated by 
$\tilde{e}_i, \tilde{f}_i, \tilde{k}^{\pm 1}_i\, (1 \le i \le 4)$.
(Tilde is assigned for distinction.)
Replacing $\tilde{k}_i^2$ with $\tilde{k}_i$
they satisfy \cite[eq.(3.2)]{FSV} 
\begin{equation}\label{ra12}
\begin{split}
&\tilde{k}_i \tilde{e}_j = G_{ij} \tilde{e}_j \tilde{k}_i,
\quad \tilde{k}_i \tilde{f}_j = G^{-1}_{ij} \tilde{f}_j \tilde{k}_i,
\quad (G_{i,j})_{1 \le i,j \le 4} = 
\begin{pmatrix}
q^2 & q^{-1} & 1 & 1\\
q^{-1} & 1 & q & 1\\
1 & q & q^{-2} & q\\
1 & 1 & q & q^{-2}
\end{pmatrix},
\\
&[\tilde{e}_1,\tilde{f}_1] = \frac{\tilde{k}_1-\tilde{k}^{-1}_1}{q-q^{-1}},\quad
[\tilde{e}_2,\tilde{f}_2]_+ = \frac{\tilde{k}_2-\tilde{k}^{-1}_2}{q-q^{-1}},\quad
[\tilde{e}_i,\tilde{f}_i] = -\frac{\tilde{k}_i-\tilde{k}^{-1}_i}{q-q^{-1}}\;(i=3,4),\\
&[\tilde{e}_i,\tilde{f}_j] = 0\;(i\neq j),
\end{split}
\end{equation}
where $[a, b]_+ = ab + ba$.
They are also to obey the so-called
Serre relations including, e.g. $\tilde{e}_2^2=0,
\tilde{e}_1^2\tilde{e}_2-[2]
\tilde{e}_1\tilde{e}_2\tilde{e}_1+\tilde{e}_2\tilde{e}_1^2=0$ 
and 
$\tilde{e}_2\tilde{e}_1\tilde{e}_2\tilde{e}_3+\tilde{e}_2\tilde{e}_3\tilde{e}_2\tilde{e}_1
-[2]\tilde{e}_2\tilde{e}_1\tilde{e}_3\tilde{e}_2+\tilde{e}_1\tilde{e}_2\tilde{e}_3\tilde{e}_2
+\tilde{e}_3\tilde{e}_2\tilde{e}_1\tilde{e}_2=0$ 
\cite[eq.(3.4)]{Y91}.

On the other hand 
the subalgebra $\overline{\mathcal U}_A(0^2,1^3)$ 
of ${\mathcal U}_A(0^2,1^3)$ 
(see the end of Section \ref{subsec:U} for the definition) 
has the generators 
$e_i, f_i, k_i^{\pm 1}\, (1 \le i \le 4)$ satisfying the relations (\ref{urel}): 
\begin{equation}\label{ua00111}
\begin{split}
&k_i e_j = D_{ij} e_j k_i,\quad k_i f_j = D^{-1}_{ij} f_j k_i,
\quad (D_{i,j})_{1 \le i,j \le 4} = 
\begin{pmatrix}
q^2 & q^{-1} & 1 & 1\\
q^{-1} & -1 & -q & 1\\
1 & -q & q^{-2} & -q\\
1 & 1 & -q & q^{-2}
\end{pmatrix},
\\
&[e_i,f_j] = \delta_{i,j} \frac{k_i-k^{-1}_i}{q-q^{-1}}.
\end{split}
\end{equation}
The difference of $G_{i,j}$ and $D_{i,j}$ are just by signs.
To compare $\overline{\mathcal U}_A(0^2,1^3)$ with $A_{q^{1/2}}(1,2)$
we add involutive elements $\theta_i$ ($i=2,3,4$) to 
$\overline{\mathcal U}_A(0^2,1^3)$ that (anti-)commute with $e_i,f_i$ as
\begin{align}\label{eft}
[e_i,\theta_j]  =  [f_i, \theta_j] = 0 \leftrightarrow G_{i,j} = D_{i,j},
\quad
[e_i,\theta_j]_+ = [f_i, \theta_j]_+ = 0 \leftrightarrow G_{i,j} = -D_{i,j}.
\end{align}
We assume that $\theta_i$'s commute with $k_j$'s.
Set
\begin{alignat*}{2}
&\tilde{e}_1=e_1,\; \quad\tilde{f}_1=f_1,\;\tilde{k}_1=k_1,\quad&
&\tilde{e}_2=e_2,\; \tilde{f}_2=f_2\theta_2,\;\tilde{k}_2=k_2\theta_2,\\
&\tilde{e}_3=e_3\theta_3,\; \tilde{f}_3=f_3,\;\tilde{k}_3=-k_3\theta_3,&\qquad
&\tilde{e}_4=e_4,\; \tilde{f}_4=f_4\theta_4,\;\tilde{k}_4=-k_4\theta_4.
\end{alignat*}
Then all the relations \eqref{ua00111} are transferred to 
\eqref{ra12}.
On the relevant space $\W=F^{\otimes 2} \otimes V^{\otimes 3}$,
$\theta_i$ ($i=2,3,4$) satisfying (\ref{eft}) is realized as
\begin{equation} \label{Aq_theta}
\theta_2|{\bf m}\rangle=(-1)^{m_3}|{\bf m}\rangle,\quad
\theta_3|{\bf m}\rangle=(-1)^{m_3+m_4}|{\bf m}\rangle,\quad
\theta_4|{\bf m}\rangle=(-1)^{m_4+m_5}|{\bf m}\rangle.
\end{equation}
Moreover the Serre
relations \cite[eq.(3.4)]{Y91} are all valid.
 Hence $\W$ is also an $A_{q^{1/2}}(1,2)$-module.

A similar correspondence holds for general case.
Let $\tilde{e}_i, \tilde{f}_i, \tilde{k}^{\pm 1}_i$ ($\tilde{k}_i$ corresponds to $k_i^2$ in 
\cite{FSV}) denote the generators of $A_{q^{1/2}}(\kappa-1,\kappa'-1)$.
Let $e_i, f_i, k^{\pm 1}_i\, (1 \le i < n)$ denote the 
generators of $\overline{\mathcal U} _A(0^\kappa,1^{\kappa'})$ 
with $\kappa+\kappa' = n$ and $0 < \kappa < n$.
Add involutive elements $\theta_\kappa,\ldots,\theta_{n-1}$ to
$\overline{\mathcal U} _A(0^\kappa,1^{\kappa'})$
that (anti-)commute with $e_i\,(1\le i <n)$ as 
\[
e_i\theta_\kappa=(-1)^{\delta_{i,\kappa}+\delta_{i,\kappa+1}}\theta_\kappa e_i,\;\;
e_i\theta_j=(-1)^{\delta_{i,j-1}+\delta_{i,j+1}}\theta_je_i\;(\kappa<j<n-1),\;\;
e_i\theta_{n-1}=(-1)^{\delta_{i,n-2}}\theta_{n-1}e_i,
\]
with $f_i$ similarly and commute with $k_j$'s. Set
\[
\tilde{e}_i=e_i\theta_i^{\chi(i\equiv \kappa-1)},\quad 
\tilde{f}_i=f_i\theta_i^{\chi(i\equiv \kappa)},\quad 
\tilde{k}_i=(-1)^{1+\delta_{i,\kappa}}k_i\theta_i \quad \text{for}\;\; 
\kappa \le i < n
\]
and $(\tilde{e}_i,\tilde{f}_i,\tilde{k}_i)= (e_i,f_i,k_i)$ for $1 \le i < \kappa$,
where $\chi(\mathrm{true})=1,
\chi(\mathrm{false})=0$ and 
$\equiv$ means the equality mod 2.
Then all the relations of $\overline{\mathcal U} _A(0^\kappa,1^{\kappa'})$
are transferred to those of $A_{q^{1/2}}(\kappa-1,\kappa'-1)$.
On $\W = F^{\otimes \kappa}\otimes V^{\otimes \kappa'}$, such 
$\theta_{\kappa},\ldots, \theta_{n-1}$ are realized as 
$\theta_j|{\bf m}\rangle=
(-1)^{\chi(j>\kappa)m_j+\chi(j\ge\kappa)m_{j+1}}|{\bf m}\rangle$. 
Then the relations 
of $A_{q^{1/2}}(\kappa-1,\kappa'-1)$ 
including the Serre ones are all valid.
Thus we conclude that $\W$ is also an 
$A_{q^{1/2}}(\kappa-1,\kappa'-1)$-module.

\subsubsection{$B_q$ and ${\mathcal U}_B$}
As opposed to the $A_q$ case, we need to take the 
deformation parameter $q$ of $B_q$ to be $p^{-1}$ to adjust conventions.
Thus we compare $B_{p^{-1}} (\kappa',\kappa)$ and
${\mathcal U} _B(0^\kappa,1^{\kappa'})$ 
with $\kappa + \kappa'=n$ for $0 < \kappa < n$.
As an illustration, consider $B_{p^{-1}}(2,2)$ generated by 
$\tilde{e}_i, \tilde{f}_i, \tilde{k}^{\pm 1}_i\, (1 \le i \le 4)$.
Replacing $\tilde{k}_i^2$ with $\tilde{k}_i$ again
they satisfy \cite[eq.(3.2)]{FSV} 
\begin{equation}\label{rd22}
\begin{split}
&\tilde{k}_i \tilde{e}_j = 
G_{ij} \tilde{e}_j \tilde{k}_i,\quad \tilde{k}_i \tilde{f}_j 
= G^{-1}_{ij} \tilde{f}_j \tilde{k}_i,
\quad (G_{i,j})_{1 \le i,j \le 4} = 
\begin{pmatrix}
q^2 & -q^{-1} & 1 & 1\\
-q^{-1} & 1 & -q & 1\\
1 & -q & q^{-2} & -q\\
1 & 1 & -q & -q^{-1}
\end{pmatrix},
\\
&[\tilde{e}_1,\tilde{f}_1] 
= -\frac{\tilde{k}_1-\tilde{k}^{-1}_1}{q-q^{-1}},\quad
[\tilde{e}_2,\tilde{f}_2]_+ 
= -\frac{\tilde{k}_2-\tilde{k}^{-1}_2}{q-q^{-1}},\quad
[\tilde{e}_3,\tilde{f}_3] 
= \frac{\tilde{k}_3-\tilde{k}^{-1}_3}{q-q^{-1}},\\
&[\tilde{e}_4,\tilde{f}_4] 
= \frac{\tilde{k}_4-\tilde{k}^{-1}_4}{p-p^{-1}},\quad
[\tilde{e}_i,\tilde{f}_j] = 0\;(i\neq j)
\end{split}
\end{equation}
and the Serre relations \cite[eq.(3.4)]{Y91}.

On the other hand the subalgebra 
$\overline{\mathcal U}_B(0^2,1^2)$ of ${\mathcal U}_B(0^2,1^2)$
has the generators 
$e_i, f_i, k_i^{\pm 1}\, (1 \le i \le 4)$ satisfying the relations (\ref{urel}): 
\begin{equation}\label{ud0011}
\begin{split}
&k_i e_j = D_{ij} e_j k_i,\quad k_i f_j = D^{-1}_{ij} f_j k_i,
\quad (D_{i,j})_{1 \le i,j \le 4} = 
\begin{pmatrix}
q^2 & q^{-1} & 1 & 1\\
q^{-1} & -1 & -q & 1\\
1 & -q & q^{-2} & -q\\
1 & 1 & -q & -q^{-1}
\end{pmatrix},
\\
&[e_i,f_j] = \delta_{i,j} \frac{k_i-k^{-1}_i}{r_i-r_i^{-1}},\quad
(r_1,\ldots, r_4) = (q,q,q,p).
\end{split}
\end{equation}
To compare $\overline{\mathcal U}_B(0^2,1^2)$ with $B_{p^{-1}}(2,2)$
we add involutive elements $\theta_i$ ($i=1,2$) 
to $\overline{\mathcal U}_B(0^2,1^2)$
that (anti-)commute with $e_i,f_i$ as in \eqref{eft}. We assume that
$\theta_i$'s commute with $k_j$'s. Set 
\begin{alignat*}{2}
&\tilde{e}_1=e_1\theta_1,\; \tilde{f}_1=f_1,\;\tilde{k}_1=-k_1\theta_1,\quad&
&\tilde{e}_2=e_2,\; \tilde{f}_2=f_2\theta_2,\;\tilde{k}_2=-k_2\theta_2,\\
&\tilde{e}_3=e_3,\; \quad \tilde{f}_3=f_3,\;\tilde{k}_3=k_3,&\qquad
&\tilde{e}_4=e_4,\; \tilde{f}_4=f_4,\;\quad\tilde{k}_4=k_4.
\end{alignat*}
Then all the relations \eqref{ud0011} are transferred to \eqref{rd22}.
On $\W = F^{\otimes 2}\otimes V^{\otimes 2}$, 
$\theta_2, \theta_3$ satisfying (\ref{eft}) are realized as
\begin{equation*} 
\theta_1|{\bf m}\rangle=(-1)^{m_1+m_2}|{\bf m}\rangle,\quad
\theta_2|{\bf m}\rangle=(-1)^{m_2}|{\bf m}\rangle.
\end{equation*}
Moreover the Serre
relations \cite[eq.(3.4)]{Y91} are all valid.
Hence $\W$ is also a $B_{p^{-1}}(2,2)$-module.

General case is similar.
Let $\tilde{e}_i, \tilde{f}_i, \tilde{k}^{\pm 1}_i$($\tilde{k}_i$ corresponds to $k_i^2$ in 
\cite{FSV}) denote the generators of $B_{p^{-1}}(\kappa',\kappa)$.
Let $e_i, f_i, k_i^{\pm1}\, (1 \le i \le n)$ denote the generators of 
$\overline{\mathcal U} _B
(0^\kappa,1^{\kappa'})$ 
with $\kappa+\kappa' = n$
and $0<\kappa<n$. 
Add elements $\theta_1,\ldots,\theta_\kappa$ to
$\overline{\mathcal U} _B(0^\kappa,1^{\kappa'})$ 
that (anti-)commute with $e_i\,(1 \le i \le n)$ as
\[
e_i\theta_j=(-1)^{\delta_{i,j-1}+\delta_{i,j+1}}\theta_j e_i\;\; (j<\kappa),\quad
e_i\theta_\kappa=(-1)^{\delta_{i,\kappa}+\delta_{i,\kappa-1}}
\theta_\kappa e_i,
\]
with $f_i$ similarly and commute with $k_j$'s. 
Set 
\[
\tilde{e}_i=e_i\theta_i^{\chi(i\equiv \kappa-1)},\quad 
\tilde{f}_i=f_i\theta_i^{\chi(i\equiv \kappa)},\quad 
\tilde{k}_i=-k_i\theta_i\qquad \text{for }\;\; 1 \le i\le \kappa
\]
($\equiv$ is again mod 2)
and $(\tilde{e}_i,\tilde{f}_i,\tilde{k}_i)= (e_i,f_i,k_i)$ for $\kappa < i \le n$.
Then all the relations of $\overline{\mathcal U} _B(0^\kappa,1^{\kappa'})$ are 
transferred to those of $B_{p^{-1}}(\kappa',\kappa)$.
On $\W = F^{\otimes \kappa} \otimes V^{\otimes \kappa'}$, such 
$\theta_1,\ldots, \theta_{\kappa}$ are realized as 
$\theta_j|{\bf m}\rangle=
(-1)^{\chi(j\le \kappa)m_j+\chi(j<\kappa)m_{j+1}}|{\bf m}\rangle$. 
Then the relations 
of $B_{p^{-1}}(\kappa',\kappa)$ 
including the Serre ones are all valid.
Thus we conclude that $\W$ is also
a $B_{p^{-1}}(\kappa',\kappa)$-module.

\subsection{Quantum $R$ matrices}\label{ss:qr}

Consider the linear equation on 
$R \in \mathrm{End}(\W_l \otimes \W_m)$ for 
${\mathcal U}_A$ and 
$R \in \mathrm{End}(\W\otimes \W)$ for 
${\mathcal U}_B$:
\begin{align}\label{eqrc}
\Delta'(g)R = R \,\Delta(g)
\quad 
\forall g \in {\mathcal U}_A\;\;\text{or}\;\; {\mathcal U}_B,
\end{align}
where $\pi_x\otimes \pi_y$ has been omitted on the both sides and 
$\Delta'$ is the coproduct opposite to (\ref{Delta}).
Namely $\Delta' = P \circ \Delta \circ P$ where
$P(u \otimes v) = v \otimes u$ is the exchange of the components. 
A little inspection of the representations $\pi_x, \pi_y$ tells that 
$R$ depends on $x$ and $y$ only via the ratio $z=x/y$.
Henceforth we write $R$ as $R(z)$.
Suppose $(\epsilon_1,\ldots, \epsilon_n)  = 
(1^\kappa,0^{n-\kappa})$.
For $0 < \kappa < n$ we will show that 
the ${\mathcal U}_A$-module $\W_l \otimes \W_m$ and 
the ${\mathcal U}_B$-module $\W\otimes \W$ are both irreducible
in Propositions \ref{pr:irrAn-1}, \ref{pr:irrAn-2} and \ref{pr:irrD}.
(The same fact holds also for $\kappa=0, n$ due to the 
earlier results mentioned in Remark \ref{re:tanseki}.)
Therefore $R$ is determined (if exists) by postulating (\ref{eqrc}) 
for $g=k_r,e_r$ and $f_r$ with $0 \le r \le \tilde{n}$ up to an overall scalar.
Explicitly these conditions read 
\begin{align}
(k_r \otimes k_r)R(z) &= R(z)(k_r\otimes k_r),\label{kR}\\
(e_r\otimes1 + k_r \otimes e_r) R(z) &= 
R(z)(1\otimes e_r + e_r\otimes k_r),\label{eR}\\
(1\otimes f_r + f_r \otimes k^{-1}_r) R(z) &= 
R(z)(f_r\otimes 1 + k^{-1}_r\otimes f_r)\label{fR}
\end{align}
for $0 \le r \le \tilde{n}$, where 
$\pi_x\otimes \pi_y$ is again omitted.
We call the intertwiner $R(z)$ 
the {\em quantum $R$ matrix}.
Its existence will be established in Theorem \ref{th:Scom},
which also provides an explicit construction. 
From (\ref{sybe}) and Theorem \ref{th:main}
it satisfies the Yang-Baxter equation
\begin{align}\label{yber}
R_{12}(x)R_{13}(xy)R_{23}(y)
= R_{23}(y)R_{13}(xy)R_{12}(x),
\end{align}
which is an equality in  
$\mathrm{End}(\W_k \otimes \W_l \otimes \W_m)$ for some $k,l,m$ for 
${\mathcal U}_A$ and 
$\mathrm{End}(\W \otimes \W \otimes \W)$ for 
${\mathcal U}_B$.

For ${\mathcal U}_B$ 
we introduce a gauge transformed quantum $R$ matrix by
\begin{align}\label{gtr}
{\tilde R}(z) = (K^{-1}\otimes 1) R(z)(1\otimes K),\qquad
K|{\bf m}\rangle = p^{-m_1-\cdots - m_n}|{\bf m}\rangle.
\end{align}
It is easy to see that ${\tilde R}(z)$ 
also satisfies the Yang-Baxter equation.
We fix the normalization of $R(z)$ by 
\begin{alignat}{2}
&R(z)\bigl(|{\bf e}_{> n-l}\rangle \otimes | {\bf e}_{>n-m}\rangle\bigr) =
|{\bf e}_{>n-l}\rangle \otimes | {\bf e}_{>n-m}\rangle&\;&
\text{for } \;{\mathcal U}_A(1,\ldots, 1),\label{rnor1}\\
&R(z)\bigl(|l{\bf e}_i \rangle \otimes |m{\bf e}_i\rangle\bigr)
=  |l{\bf e}_i \rangle \otimes |m{\bf e}_i\rangle&&
\text{for } \;{\mathcal U}_A(\epsilon_1,\ldots, \epsilon_n)
 \;\text{with}\;\epsilon_1\cdots \epsilon_n = 0,\label{rnor2}\\
&{\tilde R}(z)(|{\bf 0}\rangle \otimes |{\bf 0}\rangle)
= |{\bf 0}\rangle \otimes |{\bf 0}\rangle&&
\text{for } \;{\mathcal U}_B(\epsilon_1,\ldots, \epsilon_n),\label{rnor3}
\end{alignat}
where $i$ in (\ref{rnor2}) is taken to be the same as that in (\ref{norm2}).

In view of Section \ref{ss:qsa}, these $R$ matrices with $0 < \kappa <n$ are 
related to quantum superalgebras.
However they do not fall in the known examples, e.g.
\cite{BSh,PS,GLZT} since the structure of the space
$\W = V^{\otimes \kappa}\otimes F^{\otimes n-\kappa}$
is distinct from them.

\begin{example}\label{ex:r10}
Consider ${\mathcal U}_A(1,0)$.
For $l, m \ge 1$, one has $\W_m = \C |0,m\rangle \oplus 
\C|1,m-1\rangle \subset \W=V \otimes F$ and similarly for $\W_l$. 
The action of $R(z)$ on $\W_l \otimes \W_m$ is given by
\begin{align*}
R(z) (|0,l\rangle \otimes |0,m\rangle) 
&= |0,l\rangle \otimes |0,m\rangle,\\
R(z) (|1,l-1\rangle \otimes |0,m\rangle) 
&= \frac{1-q^{2m}}{z-q^{l+m}}|0,l\rangle \otimes |1,m-1\rangle
+\frac{q^mz-q^l}{z-q^{l+m}}|1,l-1\rangle \otimes |0,m\rangle,\\
R(z) (|0,l\rangle \otimes |1,m-1\rangle) 
&= \frac{q^lz-q^m}{z-q^{l+m}}|0,l\rangle \otimes |1,m-1\rangle
+\frac{(1-q^{2l})z}{z-q^{l+m}}|1,l-1\rangle \otimes |0,m\rangle,\\
R(z) (|1,l-1\rangle \otimes |1,m-1\rangle) 
&= \frac{1-q^{l+m}z}{z-q^{l+m}}|1,l-1\rangle \otimes |1,m-1\rangle.
\end{align*}
These formulas are deduced from the spectral decomposition 
(\ref{spd2}).
Equating them to $S^{\mathrm tr}(z|1,0)$ by Theorem \ref{th:main}
already leads to a highly nontrivial identity on the sum (\ref{sabij0})
involving the 3D $R$.
\end{example}

\begin{example}\label{ex:r110}
Consider ${\mathcal U}_A(1,1,0)$.
For $l, m \ge 2$, one has $\W_m = \C |0,0,m\rangle 
\oplus \C|0,1,m-1\rangle \oplus \C|1,0,,m-1\rangle
\oplus \C|1,1,m-2\rangle \subset \W=V \otimes V \otimes F$ and similarly for $\W_l$. 
The action of $R(z)$ on $\W_l \otimes \W_m$ is given by
\begin{align*}
R(z)(|0,i_2,i_3\rangle \otimes |0,j_2,j_3\rangle)&=
[R(z)(|i_2,i_3\rangle \otimes |j_2,j_3\rangle)]|_{|0\bullet \bullet\rangle
\otimes |0\bullet \bullet\rangle},\\
R(z)(|i_1,0,i_3\rangle \otimes |j_1,0,j_3\rangle)&=
[R(z)(|i_1,i_3\rangle \otimes |j_1,j_3\rangle)]|_{|\bullet 0 \bullet\rangle
\otimes |\bullet 0 \bullet\rangle},\\
R(z)(|1,i_2,i_3\rangle \otimes |1,j_2,j_3\rangle)&=
\frac{1-q^{l+m}z}{z-q^{l+m}}
[R(z)(|i_2,i_3\rangle \otimes |j_2,j_3\rangle)]|_{|1\bullet \bullet\rangle
\otimes |1\bullet \bullet\rangle},\\
R(z)(|i_1,1,i_3\rangle \otimes |j_1,1,j_3\rangle)&=
\frac{1-q^{l+m}z}{z-q^{l+m}}
[R(z)(|i_1,i_3\rangle \otimes |j_1,j_3\rangle)]|_{|\bullet 1 \bullet\rangle
\otimes |\bullet 1 \bullet\rangle},\\
R(z)(|0,0,l\rangle \otimes |1,1,m-2\rangle)&=
\frac{q^lz-q^m}{z-q^{l+m}}
[R(zq)(|0,l\rangle\otimes |1,m-2\rangle)]|_{|0\bullet \bullet\rangle
\otimes |1\bullet \bullet\rangle}\\
&+
\frac{(1-q^{2l})z}{z-q^{l+m}}
[R(zq)(|0,l-1\rangle\otimes |1,m-1\rangle)]|_{|1\bullet \bullet\rangle
\otimes |0\bullet \bullet\rangle},\\
R(z)(|1,1,l-2\rangle \otimes |0,0,m\rangle)&=
\frac{q(1-q^{2m})}{z-q^{l+m}}
[R(zq)(|1,l-1\rangle\otimes |0,m-1\rangle)]|_{|0\bullet \bullet\rangle
\otimes |1\bullet \bullet\rangle}\\
&+
\frac{q^mz-q^l}{z-q^{l+m}}
[R(zq)(|1,l-2\rangle\otimes |0,m\rangle)]|_{|1\bullet \bullet\rangle
\otimes |0\bullet \bullet\rangle},\\
R(z)(|1,0,l-1\rangle \otimes |0,1,m-1\rangle)&=
\frac{q^{m-1}(1-q^2)}{z-q^{l+m}}
[R(zq)(|1,l-1\rangle\otimes |0,m-1\rangle)]|_{|0\bullet \bullet\rangle
\otimes |1\bullet \bullet\rangle}\\
&+
\frac{q^mz-q^l}{z-q^{l+m}}
[R(zq)(|0,l-1\rangle\otimes |1,m-1\rangle)]|_{|1\bullet \bullet\rangle
\otimes |0\bullet \bullet\rangle}\\
&+
\frac{1-q^{2m-2}}{z-q^{l+m}}
[R(zq)(|0,l\rangle\otimes |1,m-2\rangle)]|_{|0\bullet \bullet\rangle
\otimes |1\bullet \bullet\rangle},\\
R(z)(|0,1,l-1\rangle \otimes |1,0,m-1\rangle)&=
\frac{q(1-q^{2l-2})z}{z-q^{l+m}}
[R(zq)(|1,l-2\rangle\otimes |0,m\rangle)]|_{|1\bullet \bullet\rangle
\otimes |0\bullet \bullet\rangle}\\
&+
\frac{q^lz-q^m}{z-q^{l+m}}
[R(zq)(|1,l-1\rangle\otimes |0,m-1\rangle)]|_{|0\bullet \bullet\rangle
\otimes |1\bullet \bullet\rangle}\\
&+
\frac{q^{m-1}(1-q^2)z}{z-q^{l+m}}
[R(zq)(|0,l-1\rangle\otimes |1,m-1\rangle)]|_{|1\bullet \bullet\rangle
\otimes |0\bullet \bullet\rangle},
\end{align*}
where $R(z)$ on the rhs is the one in Example \ref{ex:r10} for 
appropriate $l$ and $m$.
The notation $[X]|_{|\bullet 0 \bullet\rangle
\otimes |\bullet 0 \bullet\rangle}$ for instance stands for the 
replacement $|a,b\rangle \otimes |c,d\rangle 
\mapsto |a,0,b\rangle \otimes |c,0,d\rangle$ for 
all the monomials contained in $X$. 
Thus for example, 
\begin{align*}
&R(z)(|1,0,l-1\rangle \otimes |0,1,m-1\rangle) 
=
\frac{(q^{2m}-q^2)(q^m-q^lz)}{(q^{l+m}-z)(q^{l+m}-q^2z)}|0,0,l\rangle \otimes |1,1,m-2\rangle\\
&+\frac{(q^2-1)q^{l+m}+(q^2-q^{2+2l}-q^{2+2m}+q^{2l+2m})z}{(q^{l+m}-z)(q^{l+m}-q^2z)}
|0,1,l-1\rangle \otimes |1,0,m-1\rangle\\
&+\frac{q(q^m-q^lz)(q^l-q^mz)}{(q^{l+m}-z)(q^{l+m}-q^2z)}
|1,0,l-1\rangle \otimes |0,1,m-1\rangle
+\frac{(q^{2l}-q^2)(q^l-q^mz)z}{(q^{l+m}-z)(q^{l+m}-q^2z)}|1,1,l-2\rangle \otimes |0,0,m\rangle.
\end{align*}
Again these formulas are derived from the spectral decomposition (\ref{spd2}) and 
comparison with Example \ref{ex:r10}.
\end{example}

\begin{example}\label{ex:rB}
Consider ${\mathcal U}_B(1,0)$.
$\tilde{R}(z)$ (\ref{gtr}) acts on $\W\otimes \W$ with $\W=V \otimes F$,
satisfies (\ref{iceD}) 
and contains integer powers of $q$ only. 
Proposition \ref{spd4} leads to the following
$(i=0,1)$:
\begin{align*}
\tilde{R}(z)(|i,0\rangle \otimes |i,0\rangle) 
&=  |i,0\rangle \otimes |i,0\rangle,\\
\tilde{R}(z)(|0,0\rangle \otimes |1,0\rangle)
&= \frac{(1+q)z}{1+qz} |1,0\rangle \otimes |0,0\rangle
-\frac{q(1-z)}{1+qz}|0,0\rangle \otimes |1,0\rangle,\\
\tilde{R}(z)(|1,0\rangle \otimes |0,0\rangle)
&= \frac{1+q}{1+qz} |0,0\rangle \otimes |1,0\rangle
+\frac{1-z}{1+qz}|1,0\rangle \otimes |0,0\rangle,\\
\tilde{R}(z)(|i,1\rangle \otimes |i,0\rangle)
&= \frac{1+q}{1+qz}|i,0\rangle \otimes |i,1\rangle
+\frac{1-z}{1+qz}|i,1\rangle \otimes |i,0\rangle,\\
\tilde{R}(z)(|1,1\rangle \otimes |0,0\rangle)
&= \frac{(1+q)(1+q^2)}{(1+qz)(1+q^2z)}|0,0\rangle \otimes |1,1\rangle
+\frac{q(1+q)(1-z)}{(1+qz)(1+q^2z)}|0,1\rangle \otimes |1,0\rangle\\
&+\frac{(1+q)(1-z)}{(1+qz)(1+q^2z)}|1,0\rangle \otimes |0,1\rangle
+\frac{(1-z)(1-qz)}{(1+qz)(1+q^2z)}|1,1\rangle \otimes |0,0\rangle,\\
\tilde{R}(z)(|0,1\rangle \otimes |1,0\rangle)
&=-\frac{q(1+q)(1-z)}{(1+qz)(1+q^2z)}|0,0\rangle \otimes |1,1\rangle
-\frac{q(1-z)(1-qz)}{(1+qz)(1+q^2z)}|0,1\rangle \otimes |1,0\rangle\\
&+\frac{(1+q)z(1+q-q(1-q)z)}{(1+qz)(1+q^2z)}|1,0\rangle \otimes |0,1\rangle
+ \frac{(1+q)(1-z)z}{(1+qz)(1+q^2z)}|1,1\rangle \otimes |0,0\rangle,\\
\tilde{R}(z)(|i,2\rangle \otimes |i,0\rangle) 
&= \frac{(1+q)(1+q^2)}{(1+qz)(1+q^2z)}|i,0\rangle \otimes |i,2\rangle
+\frac{(1+q)(1+q^2)(1-z)}{(1+qz)(1+q^2z)}|i,1\rangle \otimes |i,1\rangle\\
&+\frac{(1-z)(1-qz)}{(1+qz)(1+q^2z)}|i,2\rangle \otimes |i,0\rangle,\\
\tilde{R}(z)(|i,1\rangle \otimes |i,1\rangle) 
&=-\frac{q(1+q)(1-z)}{(1+qz)(1+q^2z)}|i,0\rangle \otimes |i,2\rangle
+\frac{(1+q)(1+q+q^2)z-q(q+z^2)}{(1+qz)(1+q^2z)}
|i,1\rangle \otimes |i,1\rangle\\
&+\frac{(1+q)(1-z)z}{(1+qz)(1+q^2z)}|i,2\rangle \otimes |i,0\rangle,\\
\tilde{R}(z)(|i,0\rangle \otimes |i,2\rangle) 
&= \frac{q^2(1-z)(1-qz)}{(1+qz)(1+q^2z)}|i,0\rangle \otimes |i,2\rangle
-\frac{q(1+q)(1+q^2)(1-z)z}{(1+qz)(1+q^2z)}
|i,1\rangle \otimes |i,1\rangle\\
&+\frac{(1+q)(1+q^2)z^2}{(1+qz)(1+q^2z)}|i,2\rangle \otimes |i,0\rangle.
\end{align*}
\end{example}

\section{Main result: 
$S^{\mathrm{tr}}(z)$ and $S^{1,1}(z)$ as quantum $R$ matrices}\label{sec:qR}

Denote the $R(z)$ for  
${\mathcal U}_A(\epsilon_1,\ldots, \epsilon_n)$ by
$R_A(z|\epsilon_1,\ldots, \epsilon_n)$ 
and ${\tilde R}(z)$ (\ref{gtr}) for  
${\mathcal U}_B(\epsilon_1,\ldots, \epsilon_n)$ by
${\tilde R}_B(z|\epsilon_1,\ldots, \epsilon_n)$.
The main result of this paper is the following.
\begin{theorem}\label{th:main}
Suppose $(\epsilon_1,\ldots, \epsilon_n)  = 
(1^\kappa, 0^{n-\kappa})$.
For any $0 \le \kappa \le n$ the following identification holds:
\begin{align*}
S^{\mathrm{tr}}(z|\epsilon_1,\ldots, \epsilon_n) 
= R_A(z|\epsilon_1,\ldots, \epsilon_n),\qquad
S^{1,1}(z|\epsilon_1,\ldots, \epsilon_n) 
= {\tilde R}_B(z|\epsilon_1,\ldots, \epsilon_n).
\end{align*}
\end{theorem}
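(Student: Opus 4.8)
The plan is to show that $S^{\mathrm{tr}}(z\,|\,\epsilon_1,\ldots,\epsilon_n)$ and $S^{1,1}(z\,|\,\epsilon_1,\ldots,\epsilon_n)$ satisfy exactly the properties that pin down the quantum $R$ matrices in Section \ref{ss:qr}: the intertwining equations (\ref{kR})--(\ref{fR}) for all $0\le r\le\tilde n$, together with the normalizations (\ref{rnor1})--(\ref{rnor3}), under the standing fact that the module $\pi_x\otimes\pi_y$ is irreducible. For the homogeneous endpoints $\kappa=0$ and $\kappa=n$ the representations $\pi_x$ collapse to the classical ones listed in Remark \ref{re:tanseki} and the identifications are already recorded as (\ref{tr0}), (\ref{tr1}), (\ref{11L}) and (\ref{11R}); so the new content is the range $0<\kappa<n$, on which we concentrate.

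\emph{Step 1 (intertwining, established below as Theorem \ref{th:Scom}).} Relation (\ref{kR}) is immediate: $\pi_x(k_r)$ is diagonal with eigenvalue depending only on $(m_r,m_{r+1})$, while by the conservation laws (\ref{iceA}) and (\ref{iceD}) the operators $S^{\mathrm{tr}}(z)$ and $S^{1,1}(z)$ respect ${\bf a}+{\bf b}={\bf i}+{\bf j}$, so the two sides of (\ref{kR}) act by the same scalar on each matrix element. For the \emph{bulk} generators $e_r,f_r$ with $0<r<\tilde n$, which act only on the adjacent slots $r,r+1$ of $\W$, I would prove local identities for the pair ${\mathscr S}^{(\epsilon_r)}{\mathscr S}^{(\epsilon_{r+1})}$ that let the $e_r$- (resp.\ $f_r$-) action be transported across the matrix product (\ref{sabij0}), (\ref{sabij}) one layer at a time; these are checked from the explicit formulas (\ref{Rex}), (\ref{Lex}) and the recursion relations for the 3D $R$ recalled from Appendix A of \cite{KO3}, and are insensitive to the other layers. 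For the \emph{boundary} generators $e_0,f_0$ of ${\mathcal U}_A$ (which close the auxiliary chain cyclically via the trace) and $e_0,f_0,e_n,f_n$ of ${\mathcal U}_B$ (which sit at the two open ends against the boundary vectors), one must invoke the tetrahedron equations (\ref{TE}), (\ref{RLLL}), the symmetries and involutivity (\ref{rtb}) of $\Rm$, and, in the ${\mathcal U}_B$ case, the fixed-point property (\ref{cv}) of $|\chi_s\rangle$ and $\langle\chi_s|$, to move the action of the boundary generator from one tensor factor of $\W\otimes\W$ to the other; for ${\mathcal U}_B$ this is carried out after the gauge twist (\ref{gtr}), so that the object compared with $S^{1,1}(z)$ is ${\tilde R}_B(z)$. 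As a by-product this yields the explicit spectral decompositions (\ref{spd2}), (\ref{spd3}) and Proposition \ref{spd4}, hence the formulas in Examples \ref{ex:r10}--\ref{ex:rB}.

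\emph{Step 2 (irreducibility and normalization).} For $(\epsilon_1,\ldots,\epsilon_n)=(1^\kappa,0^{n-\kappa})$ with $0<\kappa<n$, I would prove that the ${\mathcal U}_A$-module $\W_l\otimes\W_m$ and the ${\mathcal U}_B$-module $\W\otimes\W$ are irreducible --- this is Propositions \ref{pr:irrAn-1}, \ref{pr:irrAn-2} (for ${\mathcal U}_A$) and \ref{pr:irrD} (for ${\mathcal U}_B$), proved in Sections \ref{sec:IrreA} and \ref{sec:IrreD}. The argument is the standard cyclicity scheme: from any nonzero submodule, repeated application of the $e_r,f_r$ combined with genericity of $q$ and of $z=x/y$ reaches a distinguished vector ($|{\bf 0}\rangle\otimes|{\bf 0}\rangle$, or $|{\bf e}_{>n-l}\rangle\otimes|{\bf e}_{>n-m}\rangle$, or $|l{\bf e}_i\rangle\otimes|m{\bf e}_i\rangle$), which is then shown to generate the whole module. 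Granted this, Schur's lemma makes any solution of (\ref{kR})--(\ref{fR}) unique up to a scalar; and since the normalization points of $S$ in (\ref{norm1})--(\ref{norm3}) are the same vectors with the same prescribed value as those fixing $R$ in (\ref{rnor1})--(\ref{rnor3}) --- the matrix elements having been computed already in Section \ref{ss:ex} --- that scalar equals $1$. The identifications $S^{\mathrm{tr}}=R_A$ and $S^{1,1}={\tilde R}_B$ follow.

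The main obstacle I expect is the irreducibility of $\pi_x\otimes\pi_y$ in the genuinely mixed range $0<\kappa<n$: the underlying space is the hybrid $V^{\otimes\kappa}\otimes F^{\otimes(n-\kappa)}$, so neither the crystal-type arguments for (anti)symmetric tensors of $U_q(A^{(1)}_{n-1})$ nor the known treatments of the $q$-oscillator and spin modules of $U_q(D^{(2)}_{n+1})$ transfer verbatim, and one has to control carefully the interaction of the bounded coordinates ($0\le m_i\le1$) with the unbounded ones --- in particular to exclude spurious invariant subspaces created by the truncation in (\ref{actsA}), (\ref{actsD}). A secondary difficulty is the boundary-generator half of Step 1, where the joint use of the tetrahedron equation with the trace and with the boundary vectors (\ref{cv}) is the most delicate point.
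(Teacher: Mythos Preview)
Your overall strategy matches the paper's: prove the intertwining relations (\ref{kR})--(\ref{fR}) for $S^{\mathrm{tr}}$ and the gauge-twisted $\tilde S^{1,1}$ (Theorem \ref{th:Scom}), establish irreducibility of the tensor product modules (Sections \ref{sec:IrreA}, \ref{sec:IrreD}), and then invoke Schur together with the matching normalizations (\ref{norm1})--(\ref{norm3}) against (\ref{rnor1})--(\ref{rnor3}).

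A few corrections to your Step 1 plan. For the \emph{boundary} generators you do \emph{not} need the tetrahedron equations (\ref{TE}), (\ref{RLLL}) nor the $\Rm$-invariance (\ref{cv}) of the boundary vectors; those are what produce the Yang--Baxter equation, not the intertwining. The paper's argument is purely local at the edge site: one writes out the matrix element, shifts the summation index $c_0$ or $c_n$ by $\pm 1$ so that both terms carry the same power of $z$, and the resulting identity involves a \emph{single} ${\mathscr S}^{(\epsilon)}$ (equations (\ref{ior}) and (\ref{ior2})) rather than a product of two. What enters from the boundary vector $|\chi_1\rangle$ is only its explicit weight $1/(q)_{c}$, which under the shift produces the factors $(1-q^{c_n})$ or $(1+q^{c_0+1})$. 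Likewise for $S^{\mathrm{tr}}$ the generator $e_0$ is not a separate boundary case: the cyclic identification makes it a bulk generator handled by the same two-site identity (\ref{sseq}). Your range ``$0<r<\tilde n$'' for the bulk is therefore slightly off.

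Two smaller misattributions: the spectral decompositions (\ref{spd1})--(\ref{spd3}) and Proposition \ref{spd4} arise in the irreducibility analysis of Step~2, not as a by-product of Step~1; and the result (\ref{tr1}) for $\kappa=n$ in the trace case is not pre-existing --- it is obtained here as the special case $\kappa=n$ of the present argument.
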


The two equalities hold in 
$\mathrm{End}(\W_l \otimes \W_m)$ for each $l,m$ and in 
$\mathrm{End}(\W \otimes \W)$, respectively.
Combined with Proposition \ref{pr:equiv},
Theorem \ref{th:main} tells that 
$S^{\mathrm{tr}}(z|\epsilon_1,\ldots, \epsilon_n)$ and 
$S^{1,1}(z|\epsilon_1,\ldots, \epsilon_n)$ with 
{\em arbitrary} $(\epsilon_1,\ldots, \epsilon_n) \in \{0,1\}^n$
are equivalent to the quantum $R$ matrices of the generalized quantum groups.
For $(\epsilon_1,\ldots, \epsilon_n)$ not of the above form,
the right hand sides are yet to be characterized uniquely.
See also the comments below.

The rest of the paper is devoted to a proof of 
Theorem \ref{th:main}.
It consists of three Parts.
In Part I (Section \ref{sec:proof}) we prove that 
$S^{\mathrm{tr}}$ and $S^{1,1}$ possess the same 
commutativity (\ref{eqrc}) with ${\mathcal U}_A$ and ${\mathcal U}_B$ as 
the quantum $R$ matrices (Theorem \ref{th:Scom}).
This will be done for an arbitrary sequence
$(\epsilon_1,\ldots, \epsilon_n)  \in \{0,1\}^n$.
In Part II (Section \ref{sec:IrreA}) and Part III (Section \ref{sec:IrreD}) 
we show that the relevant ${\mathcal U}_A$-module 
${\mathcal W}_l \otimes {\mathcal W}_m$ and 
the ${\mathcal U}_B$-module 
${\mathcal W} \otimes {\mathcal W}$ are irreducible
for the choice $(\epsilon_1,\ldots, \epsilon_n)  = (1^\kappa, 0^{n-\kappa})$.
This is an indispensable claim to guarantee that the $R$ matrices are 
characterized as the commutant of ${\mathcal U}_A$ and ${\mathcal U}_B$
up to a normalization.
Finally the agreement of the normalization is assured by 
(\ref{norm1})--(\ref{norm3}) and 
(\ref{rnor1})--(\ref{rnor3}).
We have not proved the irreducibility of 
${\mathcal U}_A$-module 
${\mathcal W}_l \otimes {\mathcal W}_m$ and 
the ${\mathcal U}_B$-module 
${\mathcal W} \otimes {\mathcal W}$
for $(\epsilon_1,\ldots, \epsilon_n)$ not of the above form
although we expect they are so.

In Part II and III we will utilize the following fact.
\begin{proposition} \label{pr:ht*lt}
Let $\geh$ be a finite-dimensional simple Lie algebra and $U_q(\geh)$ be
its quantized enveloping algebra with the standard Drinfeld-Jimbo generators 
$e_i, f_i, k^{\pm 1}_i$. Let $V,V'$ be irreducible $U_q(\geh)$-modules.
Let $u_h,u_l$ be highest and lowest weight vectors of $V$, i.e.
$e_i u_h = f_iu_l = 0$ for all $i$. 
Let $u'_h,u'_l$ be highest and lowest weight vectors of $V'$.
Then each of the vectors $u_h\ot u'_l$ and $u_l\ot u'_h$ generates $V\ot V'$.
\end{proposition}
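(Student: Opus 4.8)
The plan is to prove the two cyclicity statements by a two–step ``bootstrapping'' along the two tensor factors, the engine being the triangular (PBW) decomposition $U_q(\geh)=U^-U^0U^+=U^+U^0U^-$, where $U^\pm$ denotes the subalgebra generated by the $e_i$ (resp.\ the $f_i$) and $U^0$ the one generated by the $k_i^{\pm 1}$. First I would record the consequences of irreducibility. Since $u_h$ is a weight vector with $e_iu_h=0$ for all $i$, one has $U_q(\geh)u_h=U^-U^0U^+u_h=U^-u_h$, and irreducibility of $V$ forces $V=U^-u_h$; using the opposite ordering $U^+U^0U^-$ together with $f_iu_l=0$ gives likewise $V=U^+u_l$. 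The same reasoning applied to $V'$ yields $V'=U^-u'_h=U^+u'_l$. Throughout I use the standard coproduct $\Delta e_i=1\ot e_i+e_i\ot k_i$, $\Delta f_i=f_i\ot 1+k_i^{-1}\ot f_i$ (other standard conventions being handled mutatis mutandis).

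Set $M=U_q(\geh)(u_h\ot u'_l)$, a submodule by construction; the goal is $M=V\ot V'$. The first step is $u_h\ot V'\subseteq M$: because $e_iu_h=0$ one gets $\Delta(e_i)(u_h\ot w)=u_h\ot e_iw$ for every $w\in V'$, and iterating this identity along an arbitrary word $e_{i_1}\cdots e_{i_k}$ shows $u_h\ot e_{i_1}\cdots e_{i_k}u'_l\in M$; since $V'=U^+u'_l$, these span $u_h\ot V'$. The second step promotes this to $V\ot V'\subseteq M$ by lowering the left factor: if $v\ot V'\subseteq M$, then for any $w\in V'$ the element $\Delta(f_i)(v\ot w)=f_iv\ot w+k_i^{-1}v\ot f_iw$ lies in $M$ while its second summand lies in $v\ot V'\subseteq M$, whence $f_iv\ot w\in M$ and so $(f_iv)\ot V'\subseteq M$. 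Starting from $v=u_h$ and iterating over words in the $f_i$, combined with $V=U^-u_h$, yields $V\ot V'\subseteq M$, i.e.\ $M=V\ot V'$.

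For $u_l\ot u'_h$ the argument is the mirror image. With $M'=U_q(\geh)(u_l\ot u'_h)$, one first uses $f_iu_l=0$ and $\Delta(f_i)(u_l\ot w)=k_i^{-1}u_l\ot f_iw$, iterated along words in the $f_i$, to obtain $u_l\ot U^-u'_h=u_l\ot V'\subseteq M'$. Then one raises the left factor via $\Delta(e_i)(u_l\ot w)=u_l\ot e_iw+e_iu_l\ot k_iw$: if $v\ot V'\subseteq M'$, then $e_iv\ot k_iw\in M'$ for all $w\in V'$, and since $k_i$ is invertible $k_iw$ again runs over all of $V'$, giving $(e_iv)\ot V'\subseteq M'$. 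Iterating from $v=u_l$ and invoking $V=U^+u_l$ finishes the proof.

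The argument is essentially formal; the only inputs that genuinely use the hypotheses are (i) the reductions $V=U^-u_h=U^+u_l$ and $V'=U^-u'_h=U^+u'_l$, which is where the triangular decomposition and irreducibility enter, and (ii) the invertibility of the $k_i$, needed in the mirror step so that $k_iw$ sweeps out $V'$. I do not expect any real obstacle beyond this bookkeeping.
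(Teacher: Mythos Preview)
Your argument is correct and follows essentially the same two-step scheme as the paper: first obtain $u_h\otimes V'\subseteq M$ (the paper dismisses this as ``easy to see'' while you spell out the use of $e_iu_h=0$ and $V'=U^+u'_l$), then propagate to $V\otimes V'$ via $\Delta(f_i)$ exactly as the paper does. One small point worth making explicit: in your inductive step you write that $k_i^{-1}v\otimes f_iw$ lies in $v\otimes V'$, which tacitly uses that $v$ is a weight vector (so $k_i^{-1}v$ is a scalar multiple of $v$); since your iteration starts at $u_h$ and proceeds by words in the $f_i$, this is always the case, but the paper is careful to state the induction hypothesis as ``for a weight vector $u$''.
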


\begin{proof}
Set $M=U_q(\geh)(u_h\ot u'_l)$. It is easy to see that $M$ contains $u_h \otimes V'$.
We are left to show that for a weight vector $u$, if $u \otimes V'$ is contained in $M$,
then $f_i u \otimes V'$ is also contained in $M$. Note that for any $u'\in V'$,
$\Delta(f_i)(u \otimes u')=f_i u \otimes u' + k_i^{-1} u \otimes f_i u'$.
The lhs belongs to $M$ by the definition of $M$ and so does the second
term of the rhs by the assumption. Hence $f_i u \otimes u' \in M$. The other case is
similar.
\end{proof}

\section{Proof Part I: 
Commutativity with ${\mathcal U}_A$ and ${\mathcal U}_B$.}\label{sec:proof}

For $S^{1,1}(z)$ we introduce a slight gauge transformation
\begin{align}\label{ksk}
\tilde{S}^{1,1}(z) = (K\otimes 1)S^{1,1}(z)(1\otimes K^{-1}),
\end{align}
where $K$ is defined in (\ref{gtr}).
The main property of $S^{\mathrm{tr}}(z)$ and $S^{1,1}(z)$ is the 
commutativity with the generalized quantum groups 
${\mathcal U}_A$ and ${\mathcal U}_B$ identical with (\ref{eqrc}).
\begin{theorem}\label{th:Scom}
For an arbitrary sequence $(\epsilon_1,\ldots, \epsilon_n)  \in \{0,1\}^n$,
the following commutativity holds: 
\begin{align*}
\Delta'(g)S^{\mathrm{tr}}(z|\epsilon_1,\ldots, \epsilon_n)
 &= S^{\mathrm{tr}}(z|\epsilon_1,\ldots, \epsilon_n) \Delta(g)\quad 
\; \forall g \in {\mathcal U}_A(\epsilon_1,\ldots, \epsilon_n),\\
\Delta'(g) \tilde{S}^{1,1}(z|\epsilon_1,\ldots, \epsilon_n)
 &= \tilde{S}^{1,1}(z|\epsilon_1,\ldots, \epsilon_n)\Delta(g)\quad 
 \forall g \in {\mathcal U}_B(\epsilon_1,\ldots, \epsilon_n),
\end{align*} 
where $\Delta(g)$ and $\Delta'(g)$ stand for the tensor product
representation 
$(\pi_x \otimes \pi_y)\Delta(g)$ and $(\pi_x \otimes \pi_y)\Delta'(g)$ of those 
in Proposition \ref{pr:relA} and \ref{pr:relD} and $z=x/y$.
\end{theorem}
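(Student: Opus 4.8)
The plan is to reduce the intertwining relations to local identities on the 3D $R$ and 3D $L$ operators, and then to verify those local identities directly from the tetrahedron equation. I would begin with the $k_r$ relation (\ref{kR}), which is easy: from (\ref{ice0}) the operators $S^{\mathrm{tr}}(z)$ and $\tilde S^{1,1}(z)$ preserve the grading $\mathbf{a}+\mathbf{b}=\mathbf{i}+\mathbf{j}$ componentwise, and since each $\pi_x(k_r)$ is diagonal and depends only on the local occupation numbers $m_r,m_{r+1}$, the commutativity with $k_r\otimes k_r$ is immediate. So the substance of the proof lies in the $e_r$ and $f_r$ relations, i.e.\ (\ref{eR}) and (\ref{fR}).

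For the $e_r$, $f_r$ relations with $1\le r\le \tilde n$ (the ``bulk'' generators, which do not involve the spectral parameter), the strategy is as follows. Using the matrix-product form (\ref{sabij0}) or (\ref{sabij}) of $S$, the action of $e_r\otimes 1 + k_r\otimes e_r$ on one side versus $1\otimes e_r + e_r\otimes k_r$ on the other changes the occupation numbers only at sites $r$ and $r+1$. Hence it suffices to prove a \emph{two-site local} identity: that the composite operator $\mathscr S^{(\epsilon_r)}_{r}\mathscr S^{(\epsilon_{r+1})}_{r+1}$ on $W^{(\epsilon_r)}\otimes W^{(\epsilon_{r+1})}\otimes F$ intertwines the appropriate $\mathfrak{sl}_2$-type (or $\mathfrak{gl}(1|1)$-type when $\epsilon_r\ne\epsilon_{r+1}$) coproduct actions. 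Concretely I would show that for each pair $(\epsilon_r,\epsilon_{r+1})\in\{0,1\}^2$ there is an identity of the form
\begin{align*}
\bigl(\Delta(e_r)\bigr)\,\mathscr S^{(\epsilon_r)}_{1}\mathscr S^{(\epsilon_{r+1})}_{2}
= \mathscr S^{(\epsilon_r)}_{1}\mathscr S^{(\epsilon_{r+1})}_{2}\,\bigl(\Delta'(e_r)\bigr)
\end{align*}
as operators on $\overset{1}{W}\otimes\overset{2}{W}\otimes\overset{3}{F}$, where the ``$1,2$'' label the two tensor copies of $W$ being acted on and $3$ is the auxiliary Fock space, together with its $f_r$ and opposite-order counterparts. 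These are finite identities in $q$-oscillators / $2\times 2$ matrices: the $RR$ case ($\epsilon_r=\epsilon_{r+1}=0$) is essentially known from \cite{KO3}, the $LL$ case ($=1$) from \cite{KS}, and the mixed cases $RL$ and $LR$ must be checked by hand using the explicit formulas (\ref{Rex}) and (\ref{Lex}) and the $q$-oscillator relations (\ref{ac2}); by Proposition \ref{pr:equiv} one may even assume the mixed pair occurs in a fixed order. When $r=\tilde n$ for $\mathcal U_B$ one uses the special ``boundary'' form of $e_n,f_n$ in (\ref{actsD}) together with the first boundary relation in (\ref{cv}) for $s=1$.

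The genuinely $z$-dependent case is $r=0$, where $e_0$ carries a factor $x^{\delta_{i,0}}$ and the index $0$ wraps around (the trace, resp.\ the closing boundary vector, is what makes the cyclic structure appear). Here the plan is to combine the local intertwining identity at the ``seam'' (between site $n$ and site $1$) with the cyclic property of the trace --- i.e.\ $\mathrm{Tr}_3(z^{\mathbf{h}_3}XY)=\mathrm{Tr}_3(Y z^{\mathbf{h}_3}X)$ up to the factor $z^{\pm 1}$ picked up by commuting $z^{\mathbf{h}_3}$ past an operator that shifts $\mathbf{h}_3$ --- which is exactly what produces the $x$ (equivalently $z=x/y$) dependence in $\pi_x(e_0),\pi_x(f_0)$; for $S^{1,1}$ one uses instead the covariance (\ref{cv}) of the boundary vectors under $\mathscr R$ together with the action of $z^{\mathbf{h}_3/s}$. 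I expect this $r=0$ case to be the main obstacle, since it is where the trace/boundary structure, the spectral parameter, and the cyclic (affine) index all interact simultaneously; the bulk cases are more mechanical. Throughout, the gauge transformations (\ref{ksk}) and (\ref{gtr}) by $K$ are needed precisely to absorb the $p$-powers appearing in $\pi_x(k_0),\pi_x(k_n)$ for $\mathcal U_B$, so I would track those factors carefully. Once all the local identities are in place, assembling them along the chain (as in the passage from the one-layer to the $n$-layer tetrahedron equation) yields the two global intertwining relations, completing the proof of Theorem \ref{th:Scom}.
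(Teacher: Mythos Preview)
Your overall strategy---reduce the global intertwining relations to local one- or two-site identities and verify them case by case for $(\epsilon_r,\epsilon_{r+1})\in\{0,1\}^2$---is exactly what the paper does, and the $k_r$ part and the bulk $e_r,f_r$ part match closely.

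Two places where your plan would stall. First, you cannot use Proposition~\ref{pr:equiv} to assume the mixed pair occurs in a fixed order: the theorem is stated for \emph{every} sequence $(\epsilon_1,\ldots,\epsilon_n)$, and for a given sequence both orders $(0,1)$ and $(1,0)$ can occur at different $r$ (in particular at the cyclic seam for $S^{\mathrm{tr}}$). The paper checks all four pairs; the two mixed cases each reduce to four scalar identities on $\mathscr R$, namely (\ref{hkr1})--(\ref{hkr4}) and (\ref{hzk})--(\ref{yum}). Second, for $S^{1,1}$ at $r=0$ and $r=n$, the relation (\ref{cv}) is not what enters: that identity concerns $\mathscr R$ acting on a triple of boundary vectors and was already used to derive the Yang--Baxter equation. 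What actually matters is the explicit form of $|\chi_1\rangle,\langle\chi_1|$ inside the matrix element (\ref{sabij}); shifting the summation variable $c_n$ (resp.\ $c_0$) produces the factor $(1-q^{c_n})$ (resp.\ $(1+q^{c_0+1})$), and the relation collapses to a \emph{one-site} identity (\ref{ior}) (resp.\ (\ref{ior2})). Your ``cyclic trace'' intuition for $r=0$ in $S^{\mathrm{tr}}$ is right in spirit, but in the paper this is realized as exactly the same index shift of $c_0$ within the uniform bulk case (i), so $r=0$ for $S^{\mathrm{tr}}$ is no harder than any other bulk $r$; the genuinely separate cases are only $r=0,n$ for $S^{1,1}$.
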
 

\begin{proof}
It suffices to show that 
$S(z)=S^{\mathrm{tr}}(z)$ and  $S^{1,1}(z)$ satisfy 
\begin{align}
(k_r \otimes k_r)S(z) &= S(z)(k_r\otimes k_r),\label{ck}\\
(\tilde{e}_r\otimes 1+k_r\ot e_r) S(z) &= 
S(z)(1\ot \tilde{e}_r+ e_r\otimes k_r),\label{ce}\\
(1\otimes f_r +\tilde{f}_r \otimes k^{-1}_r) S(z) &= 
S(z)(f_r\otimes 1 + k^{-1}_r\otimes \tilde{f}_r)\label{cf}
\end{align}
for $0 \le r \le \tilde{n}$, where 
$\tilde{e}_r = e_r, \tilde{f}_r = f_r$ for $S^{\mathrm{tr}}(z)$ and 
$\tilde{e}_r = K^{-1}e_r K,\tilde{f}_r = K^{-1}f_r K$ for $S^{1,1}(z)$. 
$\pi_x \otimes \pi_y$ is again omitted.
It is easy to see that (\ref{ck}) is guaranteed by (\ref{iceA})--(\ref{iceD}).
In what follows we demonstrate a proof of (\ref{ce}).
The relation (\ref{cf}) can be verified similarly.
It unifies the earlier proofs 
for $S^{1,1}(z|1,\ldots,1)$ in \cite{KS} ,
$S^{1,1}(z|0,\ldots,0)$ in \cite{KO3} and 
$S^{\mathrm{tr}}(z|0,\ldots,0)$ in \cite[Prop.17]{KO3}.

Consider the action of the both sides of (\ref{ce}) on 
a base vector $|{\bf i}\rangle \otimes |{\bf j}\rangle \in 
\W\otimes \W$:
\begin{align}
y^{-\delta_{r,0}}
(\tilde{e}_r\otimes 1+k_r\ot e_r)S(z)
(|{\bf i}\rangle \otimes |{\bf j}\rangle) 
&= 
\sum_{{\bf a}, {\bf b}}A^{\bf{a,b}}_{\bf{i,j}}(z)
|{\bf a}\rangle \otimes |{\bf b}\rangle,\label{Ah}\\
y^{-\delta_{r,0}}
S(z)(1\ot \tilde{e}_r+ e_r\otimes k_r)
(|{\bf i}\rangle \otimes |{\bf j}\rangle) 
&= 
\sum_{{\bf a}, {\bf b}}B^{\bf{a,b}}_{\bf{i,j}}(z)
|{\bf a}\rangle \otimes |{\bf b}\rangle,\label{Bh}
\end{align}
where we have 
multiplied $y^{-\delta_{r,0}}$ to confine the dependence on 
$x$ and $y$ to the ratio $z=x/y$.
We are to show the equality of the matrix elements
$A^{\bf{a,b}}_{\bf{i,j}}(z)=B^{\bf{a,b}}_{\bf{i,j}}(z)$.

(i) Case $0 \le r <n$ for $S^{\mathrm{tr}}(z)$ and 
Case $0 < r < n$ for $S^{1,1}(z)$.
$\tilde{e}_r = e_r$ holds also for $S^{1,1}(z)$.
For $S^{\mathrm{tr}}(z)$, the index $0$ is to be identified with $n$.
The action of $e_r$ and $k_r$  in  (\ref{actsA}) and (\ref{actsD}) only 
concerns the $r$\,th and $(r\!+\!1)$th components
$|m_r\rangle^{(\epsilon_r)} \otimes 
|m_{r+1}\rangle^{(\epsilon_{r+1})}$ of $\W$.
Denoting them simply by $|m_r, m_{r+1} \rangle$, 
we depict (\ref{Ah})  by the following diagram: 
\[
\begin{picture}(120,90)(-40,-80)

\put(-23,4){$|i_r, i_{r+1}\rangle\otimes |j_r, j_{r+1}\rangle$}

\put(-8,-3){\vector(-1,-1){25}}
\put(-16,-22){$S(z)$}

\put(51,-3){\vector(1,-1){25}}
\put(39,-22){$S(z)$}

\put(-125,-40){$|a_r\!+\!1,a_{r+1}\!-\!1\rangle\otimes |b_r,b_{r+1}\rangle$}
\put(60,-40){$|a_r,a_{r+1}\rangle\otimes |b_r\!+\!1,b_{r+1}\!-\!1\rangle$}

\put(-33,-47){\vector(1,-1){25}}
\put(-19,-57){$e_r\!\otimes\! 1$}
\put(-80,-67){$z^{\delta_{r,0}}[a_r\!+\!1]$}
\put(76,-47){\vector(-1,-1){25}}
\put(31,-57){$k_r\!\otimes\! e_r$}
\put(68,-67){$(q_r)^{-a_r}(q_{r+1})^{a_{r+1}}[b_r\!+\!1]$}

\put(-23,-85){$|a_r,a_{r+1}\rangle\otimes |b_r,b_{r+1}\rangle$}
\end{picture}
\]
Thus we have
\begin{align}
A^{\bf{a,b}}_{\bf{i,j}}(z)
=\sum_{c_0,\ldots, c_n}&z^{c_0+\delta_{r,0}}
U(c_0, \ldots, c_{r-1}, c_{r+1}, \ldots, c_n)
\Bigl([a_r\!+\!1]
{\mathscr S}^{(\epsilon_r)\, a_r+1, b_r,c_{r-1}}_{
\phantom{(\epsilon_r)}\,i_r,j_r,c_r}
{\mathscr S}^{(\epsilon_{r+1})\, a_{r+1}-1,b_{r+1},c_r}_{
\phantom{(\epsilon_{r+1})}\, i_{r+1},j_{r+1},c_{r+1}}\nonumber\\
&+(q_r)^{-a_r}(q_{r+1})^{a_{r+1}}[b_r\!+\!1]
{\mathscr S}^{(\epsilon_r)\, a_r,b_r+1,c_{r-1}}_{
\phantom{(\epsilon_r)}\,i_r,j_r,c_r+1}
{\mathscr S}^{(\epsilon_{r+1})\, a_{r+1},b_{r+1}-1,c_r+1}_{
\phantom{(\epsilon_{r+1})}\,i_{r+1},j_{r+1},c_{r+1}}\Bigr)
\label{A3}
\end{align}
for some $U(c_0, \ldots, c_{r-1}, c_{r+1}, \ldots, c_n)$ 
which is independent of $z$.
In the second term we have shifted the dummy summation variable 
$c_r$ to $c_r+1$. 
This has the effect of letting the two terms have the identical 
constraints $b_{l}+c_{l-1} = j_{l}+c_{l}\, (l=r,r+1)$ and the
common $z$-dependence $z^{c_0 + \delta_{r,0}}$.
Similarly the diagram for (\ref{Bh}) looks as
\[
\begin{picture}(120,90)(-40,-80)

\put(-23,4){$|i_r, i_{r+1}\rangle\otimes |j_r, j_{r+1}\rangle$}

\put(-8,-3){\vector(-1,-1){25}}
\put(-18,-22){$1\!\otimes\! e_r$}

\put(-42,-12){$[j_r]$}

\put(51,-3){\vector(1,-1){25}}
\put(32,-22){$e_r\!\otimes\! k_r$}

\put(68,-12){$z^{\delta_{r,0}}(q_r)^{-j_r}(q_{r+1})^{j_{r+1}}[i_r]$}

\put(-125,-40){$|i_r,i_{r+1}\rangle\otimes |j_r\!-\!1,j_{r+1}\!+\!1\rangle$}
\put(60,-40){$|i_r\!-\!1,i_{r+1}\!+\!1\rangle\otimes |j_r,j_{r+1}\rangle$}

\put(-33,-47){\vector(1,-1){25}}
\put(-19,-57){$S(z)$}

\put(76,-47){\vector(-1,-1){25}}
\put(40,-57){$S(z)$}

\put(-23,-85){$|a_r,a_{r+1}\rangle\otimes |b_r,b_{r+1}\rangle$}
\end{picture}
\]
This leads to the expression
\begin{align}
B^{\bf{a,b}}_{\bf{i,j}}(z)
=\sum_{c_0,\ldots, c_n}&z^{c_0+\delta_{r,0}}
U(c_0, \ldots, c_{r-1}, c_{r+1}, \ldots, c_n)
\Bigl([j_r]
{\mathscr S}^{(\epsilon_r)\, a_r, b_r,c_{r-1}}_{
\phantom{(\epsilon_r)}\,i_r,j_r-1,c_r+1}
{\mathscr S}^{(\epsilon_{r+1})\, a_{r+1},b_{r+1},c_r+1}_{
\phantom{(\epsilon_{r+1})}\,i_{r+1},j_{r+1}+1,c_{r+1}} \nonumber\\
&+(q_r)^{-j_r}(q_{r+1})^{j_{r+1}}[i_r]
{\mathscr S}^{(\epsilon_r)\, a_r,b_r,c_{r-1}}_{
\phantom{(\epsilon_r)}\,i_r-1,j_r,c_r}
{\mathscr S}^{(\epsilon_{r+1})\, a_{r+1},b_{r+1},c_r}_{
\phantom{(\epsilon_{r+1})}\,i_{r+1}+1,j_{r+1},c_{r+1}}\Bigr)
\label{B3}
\end{align}
with the same $U(c_0, \ldots, c_{r-1}, c_{r+1}, \ldots, c_n)$ as (\ref{A3}).
This time $c_r$ has been shifted to $c_r+1$ in the first term 
by the same reason as in (\ref{A3}).
Comparing (\ref{A3}) and (\ref{B3}) and noting the 
conservation law (\ref{ice0}), we find that 
$A^{\bf{a,b}}_{\bf{i,j}}(z)=B^{\bf{a,b}}_{\bf{i,j}}(z)$
is reduced to the equality of the 
quantities in the parenthesis.
Writing $(a_r, b_r, i_r,  j_r)$ as  $(a,b,i,j)$,
$(a_{r+1}, b_{r+1}, i_{r+1},  j_{r+1})$ as $(a',b',i',j')$,
$(c_{r-1}, c_r, c_{r+1})$ as $(c, k, k')$,
$(\epsilon_r, \epsilon_{r+1})$ as $(\epsilon, \epsilon')$ and 
$(q_r, q_{r+1})$ as $(\rho, \rho')$, 
it reads
\begin{equation}\label{sseq}
\begin{split}
&[a+1]\,
{\mathscr S}^{(\epsilon)\,a+1, b, c}_{\phantom{(\epsilon)}\,i,  j, k}\;
{\mathscr S}^{(\epsilon')\,a'-1, b', k}_{\phantom{(\epsilon)}\,i',  j', k'}
+\rho^{-a}(\rho')^{a'}[b+1]\,
{\mathscr S}^{(\epsilon)\,a, b+1, c}_{\phantom{(\epsilon)}\,i,  j, k+1}\;
{\mathscr S}^{(\epsilon')\,a', b'-1, k+1}_{\phantom{(\epsilon)}\,i', j', k'}\\
&=
[j]\,
{\mathscr S}^{(\epsilon)\,a, b, c}_{\phantom{(\epsilon)}\,i, j-1, k+1}\;
{\mathscr S}^{(\epsilon')\,a', b', k+1}_{\phantom{(\epsilon)}\,i',  j'+1, k'}
+\rho^{-j}(\rho')^{j'}[i]\,
{\mathscr S}^{(\epsilon)\,a, b, c}_{\phantom{(\epsilon)}\,i-1, j, k}\;
{\mathscr S}^{(\epsilon')\,a', b', k}_{\phantom{(\epsilon)}\,i'+1,  j', k'}.
\end{split}
\end{equation}
Note that $(\rho,\rho') = ((-1)^\epsilon q^{1-2\epsilon},
(-1)^{\epsilon'} q^{1-2\epsilon'})$
by the definition (\ref{qidef}).
For $(\epsilon,\epsilon')=(0,0)$, (\ref{sseq}) coincides with \cite[eq.(A.16)]{KO3}.
For $(\epsilon,\epsilon')=(1,1)$,  (\ref{sseq})  reads
\begin{align*}
\delta^{a,a'}_{0,1}{\mathscr L}^{1,b}_{i,j}{\mathscr L}^{0,b'}_{i',j'}
+\delta^{b,b'}_{0,1}(-q^{-1})^{-a+a'}{\mathscr L}^{a,1}_{i,j}{\mathscr L}^{a',0}_{i',j'}
= 
\delta^{1,0}_{j,j'}{\mathscr L}^{a,b}_{i,0}{\mathscr L}^{a',b'}_{i',1}
+
\delta^{1,0}_{i,i'}(-q^{-1})^{-j+j'}{\mathscr L}^{a,b}_{0,j}{\mathscr L}^{a',b'}_{1,j'},
\end{align*}
where all the indices are in $\{0,1\}$ and 
$\delta^{a,a'}_{0,1}= \delta^a_0\delta^{a'}_1$, etc.
These $2^8$ relations can directly 
be checked by substituting (\ref{lak})  and using (\ref{ac2}).
For $(\epsilon,\epsilon')=(0,1)$,  (\ref{sseq})  reads
\begin{align*}
&\delta^{a'}_{1}[a+1]\Rm^{a+1,b,c}_{i,j,k}{\mathscr L}^{0,b',k}_{i',j',k'}
+\delta^{b'}_{1}q^{-a}(-q^{-1})^{a'}[b+1]\Rm^{a,b+1,c}_{i,j,k+1}
{\mathscr L}^{a',0,k+1}_{i',j',k'}\\
&- 
\delta^{0}_{j'}[j]\Rm^{a,b,c}_{i,j-1,k+1}{\mathscr L}^{a',b',k+1}_{i',1,k'}
-
\delta^{0}_{i'}q^{-j}(-q^{-1})^{j'}[i]
\Rm^{a,b,c}_{i-1,j,k}{\mathscr L}^{a',b',k}_{1,j',k'}=0.
\end{align*}
Among $2^4$ choices of $(a',b',i',j')$, there are four  
that lead to nontrivial relations for some values of $k'-k$.
They are given by 
$(a',b',i',j',k')=(1,0,0,0,k), \,
(1,1,0,1,k),\,
(0,1,0,0,k+1)$ and $(1,1,1,0,k+1)$.
The corresponding relations read
\begin{align}
&[a+1] \Rm^{a+1,b,c}_{i,j,k} - [j] \Rm^{a,b,c}_{i,j-1,k+1}
-q^{-j+k}[i] \Rm^{a,b,c}_{i-1,j,k}=0,\label{hkr1}\\
&q^{k+1}[a+1]\Rm^{a+1,b,c}_{i,j,k}+q^{-a-1}[b+1]\Rm^{a,b+1,c}_{i,j,k+1}
-q^{-j-1}[i]\Rm^{a,b,c}_{i-1,j,k}=0,\label{hkr2}\\
&q^{-a}[b+1]\Rm^{a,b+1,c}_{i,j,k+1}
+q^{k+2}[j]\Rm^{a,b,c}_{i,j-1,k+1}-
q^{-j}(1-q^{2k+2})[i]\Rm^{a,b,c}_{i-1,j,k}=0,\label{hkr3}\\
&(1-q^{2k+2})[a+1]\Rm^{a+1,b,c}_{i,j,k}-q^{-a+k}[b+1]\Rm^{a,b+1,c}_{i,j,k+1}
-[j]\Rm^{a,b,c}_{i,j-1,k+1}=0. \label{hkr4}
\end{align}
Eqs.  (\ref{hkr1}) and (\ref{hkr3}) are equivalent to 
the known identities named $t_{2 1}$ and $t_{1 1}$ 
appearing before \cite[eq.(A.2)]{KO3}.
Any element of $\Rm$  in (\ref{hkr2}) and (\ref{hkr4}) can be 
converted into the form $\Rm^{a,b,c}_{\bullet, \bullet,\bullet}$ 
by using (\ref{hkr1})  to decrease $a$ and (\ref{hkr3})  to decrease $b$.
The resulting expressions turn out to be identically zero 
because of $(a+b,b+c)=(i+j-1,j+k)$. See (\ref{Rex}).
For $(\epsilon,\epsilon')=(1,0)$,  (\ref{sseq})  reads
\begin{align*}
&\delta^0_a {\mathscr L}^{1,b,c}_{i,j,k}\;\Rm^{a'-1,b',k}_{i',j',k'}
+\delta^0_b(-q^{-1})^{-a}q^{a'}
{\mathscr L}^{a,1,c}_{i,j,k+1}\;\Rm^{a',b'-1,k+1}_{i',j',k'}\\
&- \delta^1_j {\mathscr L}^{a,b,c}_{i,0,k+1}\;\Rm^{a',b',k+1}_{i',j'+1,k'}
-\delta_i^1 (-q^{-1})^{-j}q^{j'}
{\mathscr L}^{a,b,c}_{0,j,k}\;\Rm^{a',b',k}_{i'+1,j',k'} = 0.
\end{align*}
Among $2^4$ choices of $(a,b,i,j)$, there are four  
that lead to nontrivial relations for some values of $c-k$.
They are given by 
$(a,b,i,j,c)=(0,0,0,1,k+1),\, (1,0,1,1,k+1),\,
(0,0,1,0,k)$ and $(0,1,1,1,k)$.
The corresponding relations, after removing primes, read
\begin{align}
&\Rm^{a-1,b,c}_{i,j,k}-q^{a+c+2}\Rm^{a,b-1,c+1}_{i,j,k}
-\Rm^{a,b,c+1}_{i,j+1,k}=0, \label{hzk}\\
&q^{a+1}\Rm^{a,b-1,c}_{i,j,k-1}+q^c\Rm^{a,b,c}_{i,j+1,k-1} - 
q^{j+1}\Rm^{a,b,c-1}_{i+1,j,k-1}=0,\label{ann}\\
&q^c\Rm^{a-1,b,c}_{i,j,k}+q^a(1-q^{2c+2})\Rm^{a,b-1,c+1}_{i,j,k}
-q^j\Rm^{a,b,c}_{i+1,j,k}=0,\label{mak}\\
&\Rm^{a-1,b,c}_{i,j,k}-(1-q^{2c+2})\Rm^{a,b,c+1}_{i,j+1,k}
-q^{c+j+2}\Rm^{a,b,c}_{i+1,j,k}=0. \label{yum}
\end{align}
Eqs. (\ref{hzk}) and (\ref{mak}) are equivalent to 
(A.3) and (A.2) in \cite{KO3}, respectively.
Any element of $\Rm$  in (\ref{ann}) (resp. (\ref{yum})) can be 
converted into the form 
$\Rm_{i,j,k-1}^{\bullet, \bullet, \bullet}$ 
(resp. $\Rm_{i,j,k}^{\bullet, \bullet,\bullet}$)
by using (\ref{mak}) to decrease $i$ and (\ref{hzk}) to decrease $j$.
The resulting expressions are identically zero.

(ii) Case $r=n$ for $S^{1,1}(z)$.
The action of $e_n$ and $k_n$ in (\ref{actsD}) only concern
the $n$\,th component $|m_n\rangle^{(\epsilon_n)}$ 
of $|{\bf m}\rangle$.
Denoting it simply by $|m_n\rangle$, we depict (\ref{Ah}) as
\[
\begin{picture}(120,90)(-35,-80)

\put(0,0){$|i_n\rangle\otimes |j_n\rangle$}

\put(-8,-3){\vector(-1,-1){25}}
\put(-16,-22){$S(z)$}

\put(51,-3){\vector(1,-1){25}}
\put(38,-22){$S(z)$}

\put(-65,-40){$|a_n\!+\!1\rangle\otimes |b_n\rangle$}
\put(55,-40){$|a_n\rangle\otimes |b_n\!+\!1\rangle$}

\put(-33,-47){\vector(1,-1){25}}
\put(-19,-57){${\tilde e}_n\!\otimes\! 1$}
\put(-75,-67){$p^{-1}[a_n+1]$}

\put(76,-47){\vector(-1,-1){25}}
\put(29,-57){$k_n\!\otimes\! e_n$}
\put(68,-67){$p(q_n)^{-a_n}[b_n+1]$}

\put(0,-77){$|a_n\rangle\otimes |b_n\rangle$}
\end{picture}
\]
where ${\tilde e}_n = p^{-1}e_n$ has been used. 
Thus we have
\begin{equation}\label{A2}
\begin{split}
p^{-1}A^{\bf{a,b}}_{\bf{i,j}}(z)
&=\sum_{c_0,\ldots, c_n}
\frac{z^{c_0}}{(q)_{c_n}}X(c_0,\ldots, c_{n-1})\\
\times &\left(p^{-2}[a_n+1](1-q^{c_n})
{\mathscr S}^{(\epsilon_n)\,a_n+1,b_n,c_{n-1}}_{
\phantom{(\epsilon_n)}\, i_n,j_n,c_n-1}
+(q_n)^{-a_n}[b_n+1]
{\mathscr S}^{(\epsilon_n)\,a_n,b_n+1,c_{n-1}}_{
\phantom{(\epsilon_n)}\, i_n,j_n,c_n}\right),
\end{split}
\end{equation}
where $c_n$ has been shifted to $c_n-1$ in the first term.
$X(c_0,\ldots, c_{n-1})$ is independent of $z$.
Similarly (\ref{Bh}) with $r=n$ is depicted as
\[
\begin{picture}(120,90)(-40,-80)

\put(0,0){$|i_n\rangle\otimes |j_n\rangle$}

\put(-56,-12){$p^{-1}[j_n]$}
\put(-8,-3){\vector(-1,-1){25}}
\put(-16,-22){$1\!\otimes\! {\tilde e}_n$}

\put(51,-3){\vector(1,-1){25}}
\put(30,-22){$e_n\!\otimes\! k_n$}
\put(68,-12){$[i_n]\,p(q_n)^{-j_n}$}

\put(-65,-40){$|i_n\rangle\otimes |j_n\!-\!1\rangle$}
\put(55,-40){$|i_n\!-\!1\rangle\otimes |j_n\rangle$}

\put(-33,-47){\vector(1,-1){25}}
\put(-17,-57){$S(z)$}

\put(76,-47){\vector(-1,-1){25}}
\put(39,-57){$S(z)$}

\put(0,-77){$|a_n\rangle\otimes |b_n\rangle$}
\end{picture}
\]
This leads to
\begin{equation}\label{B2}
\begin{split}
p^{-1}B^{\bf{a,b}}_{\bf{i,j}}(z)
&=\sum_{c_0,\ldots, c_n}
\frac{z^{c_0}}{(q)_{c_n}}X(c_0,\ldots, c_{n-1})\\
\times &\left(p^{-2}[j_n]
{\mathscr S}^{(\epsilon_n)\,a_n,b_n,c_{n-1}}_{
\phantom{(\epsilon_n)}\, i_n,j_n-1,c_n}
+(q_n)^{-j_n}[i_n](1-q^{c_n})
{\mathscr S}^{(\epsilon_n)\,a_n,b_n,c_{n-1}}_{
\phantom{(\epsilon_n)}\, i_n-1,j_n,c_n-1}\right),
\end{split}
\end{equation}
where $c_n$ has been shifted to $c_n-1$ in the second term.
$X(c_0,\ldots, c_{n-1})$ is the same as in (\ref{A2}). 
From (\ref{A2}), (\ref{B2}) and (\ref{ice0}), 
$A^{\bf{a,b}}_{\bf{i,j}}(z)=B^{\bf{a,b}}_{\bf{i,j}}(z)$ is reduced to 
the equality of the quantities in the parenthesis:
\begin{equation}\label{ior}
\begin{split}
&-q[a+1](1-q^k)
{\mathscr S}^{(\epsilon)\,a+1,b,c}_{
\phantom{(\epsilon)}\, i,j,k-1}
+\rho^{-a}[b+1]
{\mathscr S}^{(\epsilon)\,a,b+1,c}_{
\phantom{(\epsilon)}\, i,j,k}\\
&+q[j]
{\mathscr S}^{(\epsilon)\,a,b,c}_{
\phantom{(\epsilon)}\, i,j-1,k}
-\rho^{-j}[i](1-q^{k})
{\mathscr S}^{(\epsilon)\,a,b,c}_{
\phantom{(\epsilon)}\, i-1,j,k-1}=0,
\end{split}
\end{equation}
where we have set 
$(a_n,b_n,c_{n-1},i_n,j_n,c_n,\epsilon_n,q_n)
=(a,b,c,i,j,k,\epsilon,\rho)$.
Thus $\rho = (-1)^{\epsilon}q^{1-2\epsilon}$
by (\ref{qidef}).
For $\epsilon=1$, (\ref{ior}) reads
\begin{align*}
-q(1-q^k)\delta^a_0\,
{\mathscr L}^{1,b,c}_{i,j,k-1}
+(-q)^a\delta^b_0\,
{\mathscr L}^{a,1,c}_{i,j,k}
+q\delta^1_j\,
{\mathscr L}^{a,b,c}_{i,0,k}
-(-q)^j(1-q^{k})\delta^1_i\,
{\mathscr L}^{a,b,c}_{0,j,k-1}=0,
\end{align*}
which can directly be verified by using (\ref{Lex}).
For $\epsilon=0$,  (\ref{ior}) represents the relation 
obtained by replacing ${\mathscr S}^{(0)}$ by $\Rm$ and 
$\rho$ by $q$. 
All the elements of $\Rm$ can be converted to the the form 
$\Rm^{a,b,c} _{\bullet,\bullet,\bullet}$ 
by decreasing $a$ by (\ref{hkr1}) and $b$ by (\ref{hkr3}).
The resulting expression turns out to be identically zero.

(iii) Case $r=0$ for $S^{1,1}(z)$.
The action of $e_0$ and $k_0$ in (\ref{actsD}) only concern the first 
component $|m_1\rangle^{(\epsilon_1)}$ of $|{\bf m}\rangle$.
Denoting it simply by $|m_1\rangle$, we depict (\ref{Ah}) and (\ref{Bh}) as
\[
\begin{picture}(120,90)(70,-80)

\put(0,0){$|i_1\rangle\otimes |j_1\rangle$}

\put(-8,-3){\vector(-1,-1){25}}
\put(-16,-22){$S(z)$}

\put(51,-3){\vector(1,-1){25}}
\put(38,-22){$S(z)$}

\put(-65,-40){$|a_1\!-\!1\rangle\otimes |b_1\rangle$}
\put(55,-40){$|a_1\rangle\otimes |b_1\!-\!1\rangle$}

\put(-33,-47){\vector(1,-1){25}}
\put(-19,-57){${\tilde e}_0\!\otimes\! 1$}
\put(-39,-67){$xp$}

\put(76,-47){\vector(-1,-1){25}}
\put(29,-57){$k_0\!\otimes\! e_0$}
\put(68,-67){$yp^{-1}(q_1)^{a_1}$}

\put(0,-77){$|a_1\rangle\otimes |b_1\rangle$}

\put(220,0){
\put(0,0){$|i_1\rangle\otimes |j_1\rangle$}

\put(-37,-12){$yp$}
\put(-8,-3){\vector(-1,-1){25}}
\put(-16,-22){$1\!\otimes\! {\tilde e}_0$}

\put(51,-3){\vector(1,-1){25}}
\put(30,-22){$e_0\!\otimes\! k_0$}
\put(68,-12){$xp^{-1}(q_1)^{j_1}$}

\put(-65,-40){$|i_1\rangle\otimes |j_1\!+\!1\rangle$}
\put(55,-40){$|i_1\!+\!1\rangle\otimes |j_1\rangle$}

\put(-33,-47){\vector(1,-1){25}}
\put(-17,-57){$S(z)$}

\put(76,-47){\vector(-1,-1){25}}
\put(39,-57){$S(z)$}

\put(0,-77){$|a_1\rangle\otimes |b_1\rangle$}
}

\end{picture}
\]
where ${\tilde e}_0 = pe_0$ has been used.
From this and $z=x/y$ we have
\begin{align*}
p^{-1}A^{\bf{a,b}}_{\bf{i,j}}(z)
&=\sum_{c_0,\ldots, c_n}z^{c_0+1}(-q;q)_{c_0}Y(c_1,\ldots, c_n)
\left(
{\mathscr S}^{(\epsilon_1)\,a_1-1,b_1,c_0}_{
\phantom{(\epsilon_1)}\, i_1,j_1,c_1}
+p^{-2}(q_1)^{a_1}(1+q^{c_0+1})
{\mathscr S}^{(\epsilon_1)\,a_1,b_1-1,c_0+1}_{
\phantom{(\epsilon_1)}\, i_1,j_1,c_1}
\right),\\
p^{-1}B^{\bf{a,b}}_{\bf{i,j}}(z)
&=\sum_{c_0,\ldots, c_n}z^{c_0+1}(-q;q)_{c_0}Y(c_1,\ldots, c_n)
\left((1+q^{c_0+1})
{\mathscr S}^{(\epsilon_1)\,a_1,b_1,c_0+1}_{
\phantom{(\epsilon_1)}\, i_1,j_1+1,c_1}
+p^{-2}(q_1)^{j_1}
{\mathscr S}^{(\epsilon_1)\,a_1,b_1,c_0}_{
\phantom{(\epsilon_1)}\, i_1+1,j_1,c_1}
\right)
\end{align*}
with a common $Y(c_1,\ldots, c_n)$ independent of $z$.
We have shifted $c_0$ to $c_0+1$
in the second term of $p^{-1}A^{\bf{a,b}}_{\bf{i,j}}(z)$ and 
in the first term of $p^{-1}B^{\bf{a,b}}_{\bf{i,j}}(z)$.
Now $A^{\bf{a,b}}_{\bf{i,j}}(z)=B^{\bf{a,b}}_{\bf{i,j}}(z)$ is reduced to 
\begin{align}\label{ior2}
{\mathscr S}^{(\epsilon)\,a-1,b,c}_{
\phantom{(\epsilon)}\, i,j,k}
-q\rho^{a}(1+q^{c+1})
{\mathscr S}^{(\epsilon)\,a,b-1,c+1}_{
\phantom{(\epsilon)}\, i,j,k}
-(1+q^{c+1})
{\mathscr S}^{(\epsilon)\,a,b,c+1}_{
\phantom{(\epsilon)}\, i,j+1,k}
+q\rho^{j}
{\mathscr S}^{(\epsilon)\,a,b,c}_{
\phantom{(\epsilon)}\, i+1,j,k}=0
\end{align}
for $\rho = (-1)^\epsilon q^{1-2\epsilon}$.
For $\epsilon=1$, this can be verified directly from (\ref{Lex}).
For $\epsilon=0$, all the elements of $\Rm$ can be expressed in the 
form $\Rm^{\bullet, \bullet, \bullet}_{i,j,k}$ by 
using (\ref{hzk}) to decrease $j$ and (\ref{mak}) to decrease $i$.
The result turns out to be identically zero.
The proof of (\ref{ce}) is completed.
\end{proof}

\section{Proof Part II: 
Irreducibility of ${\mathcal W}_l \otimes {\mathcal W}_m$ for ${\mathcal U}_A$}
\label{sec:IrreA}

Here we consider 
${\mathcal U}_A$ of the form 
${\mathcal U}_A(1^\kappa,0^{n-\kappa})\, 
(0 \le \kappa \le n)$ and show 
that the ${\mathcal U}_A$-module $\W_l\otimes \W_m$ is irreducible.
See (\ref{wl}) for the definition of $\W_l\subset \W$.
We assume that $0 \le l,m \le n$ if $\kappa=n$ and 
$l,m \in \Z_{\ge 0}$ otherwise.
We will flexibly write 
$|a_1,a_2,\ldots, a_n\rangle \otimes |b_1,b_2,\ldots b_n \rangle \in 
\W\otimes \W$ as
\begin{align*}
&(|a_1\rangle \otimes |b_1\rangle) \bt (|a_2\rangle \otimes |b_2\rangle) \bt \cdots 
\bt (|a_n\rangle \otimes |b_n\rangle) \;\;\text{or}\\
&(|a_1,\ldots, a_j\rangle \otimes |b_1,\ldots, b_j\rangle)
\bt 
(|a_{j+1},\ldots, a_n\rangle \otimes |b_{j+1},\ldots, b_n\rangle)\;\;\text{for some $j$}
\end{align*}
and so on. 
The vectors $v_0=|0\rangle^{(1)},  v_1 =|1\rangle^{(1)}\in V$ 
(\ref{kby}) will simply be denoted by
$|0\rangle, |1\rangle$.
They are to be distinguished from 
$|0\rangle=|0\rangle^{(0)}, |1\rangle=|1\rangle^{(0)} \in F$ 
from  the context. (See the remark after (\ref{kby}).)
We treat the cases $\kappa=n,n-1$ and $1 \le \kappa \le n-2$ separately. 
We include the results on the spectral decompositions although the 
concrete forms of the eigenvalues in (\ref{spd1}), (\ref{spd2}) and 
(\ref{spd3}) are not necessary 
for our main issue, namely, the proof of the irreducibility.

\subsection{Case $\kappa=n$}\label{ss:kan-1}
As mentioned in 
(\ref{equiv}), the relevant algebra ${\mathcal U}_A(1,\ldots, 1)$ 
supplemented with the Serre relation is 
$U_{-q^{-1}}(A^{(1)}_{n-1})$.
The representation $\W_l$ in Proposition \ref{pr:relA} is the 
$(-q^{-1})$-analogue of the 
$l$-fold anti-symmetric tensor representation.
Thus we assume $0 \le l,m \le n$.
It is known that ${\mathcal W}_l \otimes {\mathcal W}_m$ is 
an irreducible $U_{-q^{-1}}(A^{(1)}_{n-1})$-module, and 
the quantum $R$ matrix is given for example in \cite{DO}.
We recall it as a preparation for the next case $\kappa=n-1$.
Note that Proposition \ref{pr:relA} 
with $\W= V^{\otimes n}$ and $\forall q_i = -q^{-1}$ gives
\begin{equation}\label{eacts}
\begin{split}
&(\pi_x \otimes \pi_y)\Delta(e_i)
(|\ldots, m_i,m_{i+1},\ldots\rangle \otimes |\ldots, m'_i,m'_{i+1},\ldots\rangle)\\
&\quad= y^{\delta_{i,0}}\delta_{m'_i}^1\delta_{m'_{i+1}}^0
|\ldots, m_i,m_{i+1},\ldots\rangle \otimes |\ldots, 0,1,\ldots\rangle\\
&\quad+ x^{\delta_{i,0}}\delta_{m_i}^1\delta_{m_{i+1}}^0(-q)^{m'_i-m'_{i+1}}
|\ldots, 0,1,\ldots\rangle \otimes |\ldots, m'_i,m'_{i+1},\ldots\rangle
\quad (i \in \Z_n).
\end{split}
\end{equation}

\subsubsection{Singular vectors}
For $r \ge 1$ we define
\begin{equation}\label{Jvec}
\begin{split}
&{\mathcal J}_{r,j} = \sum_{
\begin{subarray}{c} (i_1,\ldots, i_r) \in \{0,1\}^r \\
i_1+\cdots+i_r=j
\end{subarray}}q^{\mathrm{inv}(i_1,\ldots, i_r)}
|i_1,\ldots, i_r\rangle \otimes |{\bar i}_1,\ldots, {\bar i}_r\rangle \in V^{\otimes r} \otimes V^{\otimes r}
\quad (0 \le j \le r),\\
&\mathrm{inv}(i_1,\ldots, i_r) = \sum_{1 \le s<t\le r}i_s{\bar i}_t,\qquad {\bar i} = 1-i.
\end{split}
\end{equation}
It is characterized by the recursion relations
\begin{align}
{\mathcal J}_{r,j} &= {\mathcal J}_{r-1,j-1} \bt (|1\rangle \otimes |0\rangle)
+q^j {\mathcal J}_{r-1,j}
 \bt (|0\rangle \otimes |1\rangle)\label{Jrec1}\\
&= (|0\rangle \otimes |1\rangle)\bt 
{\mathcal J}_{r-1,j} +q^{r-j} (|1\rangle \otimes |0\rangle)\bt
{\mathcal J}_{r-1,j-1}\label{Jrec2}
\end{align}
with the initial condition 
${\mathcal J}_{1,0} = |0\rangle \otimes |1\rangle$ and 
${\mathcal J}_{1,1} = |1\rangle \otimes |0\rangle$,
where ${\mathcal J}_{r,j}$ with $j \not\in [0,r]$ is to be understood as 0.
For example, the $r=2$ case reads
\begin{align*}
{\mathcal J}_{2,0} = |0,0\rangle \otimes |1,1\rangle,\quad
{\mathcal J}_{2,1} = |0,1\rangle \otimes |1,0\rangle+q|1,0\rangle \otimes |0,1\rangle,\quad
{\mathcal J}_{2,2} = |1,1\rangle \otimes |0,0\rangle.
\end{align*}
We also understand that ${\mathcal J}_{0,0}$ is the object 
that formally makes the above recursion relations valid for $r=1$.
Note that ${\mathcal J}_{n,j} \in \W_j \otimes \W_{n-j}$.

\begin{lemma}\label{le:eJ}
$(\pi_x \otimes \pi_y)\Delta(e_i){\mathcal J}_{n,j} = 0$ holds for $1 \le i \le n-1$ and  $0 \le j \le n$.
\end{lemma}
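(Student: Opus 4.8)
The plan is to exploit the fact that for $1 \le i \le n-1$ the operators $e_i$ and $k_i$ in Proposition \ref{pr:relA} act only on the $i$-th and $(i+1)$-th tensor components of $\W = V^{\otimes n}$ and carry no dependence on the spectral parameters (the factor $x^{\delta_{i,0}}$ in (\ref{actsA}) is trivial here). Consequently, after the reorganization $V^{\otimes n}\otimes V^{\otimes n}\simeq\bigotimes_k(V_k\otimes V'_k)$ implicit in the $\bt$-notation, the operator $(\pi_x\otimes\pi_y)\Delta(e_i)=1\otimes\pi_y(e_i)+\pi_x(e_i)\otimes\pi_y(k_i)$ is the identity on every paired factor $V_k\otimes V'_k$ with $k\ne i,i+1$, and on the two remaining factors it is precisely the coproduct action of $e_i$ in a two-site problem. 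So it suffices to peel ${\mathcal J}_{n,j}$ down to that two-site core and check the claim there.

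First I would dispose of the base case $n=2$ (so $i=1$, and here $q_1=q_2=-q^{-1}$ since $\kappa=n$): using (\ref{eacts}) together with the explicit vectors ${\mathcal J}_{2,0}=|0,0\rangle\otimes|1,1\rangle$, ${\mathcal J}_{2,1}=|0,1\rangle\otimes|1,0\rangle+q|1,0\rangle\otimes|0,1\rangle$ and ${\mathcal J}_{2,2}=|1,1\rangle\otimes|0,0\rangle$, one verifies in a few lines that $(\pi_x\otimes\pi_y)\Delta(e_1)$ kills each of them. The only nontrivial cancellation is for ${\mathcal J}_{2,1}$: the $1\otimes e_1$ term produces $|0,1\rangle\otimes|0,1\rangle$, while the $e_1\otimes k_1$ term produces $q\cdot(-q^{-1})|0,1\rangle\otimes|0,1\rangle$, and the two sum to zero.

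For the inductive step take $n\ge 3$. If $1\le i\le n-2$, apply (\ref{Jrec1}) once to peel off the $n$-th site, ${\mathcal J}_{n,j}={\mathcal J}_{n-1,j-1}\bt(|1\rangle\otimes|0\rangle)+q^j{\mathcal J}_{n-1,j}\bt(|0\rangle\otimes|1\rangle)$; since $e_i,k_i$ do not touch the $n$-th site, $(\pi_x\otimes\pi_y)\Delta(e_i)$ factorizes as the corresponding $(n-1)$-site operator on the first factor tensored with the identity on $V_n\otimes V'_n$, so the statement follows from the induction hypothesis applied to ${\mathcal J}_{n-1,j-1}$ and ${\mathcal J}_{n-1,j}$. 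If $i=n-1$, apply instead (\ref{Jrec2}) to peel off the first site, ${\mathcal J}_{n,j}=(|0\rangle\otimes|1\rangle)\bt{\mathcal J}_{n-1,j}+q^{n-j}(|1\rangle\otimes|0\rangle)\bt{\mathcal J}_{n-1,j-1}$; now $e_{n-1}$ acts on the last two sites, which in ${\mathcal J}_{n-1,\bullet}$ are the two sites attached to node $n-2$ of an $(n-1)$-site chain, and the induction hypothesis (valid since $1\le n-2\le (n-1)-1$) finishes the argument.

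The only real obstacle is the bookkeeping in the previous paragraph: one must verify carefully that, under the reorganization of tensor factors, peeling off an "untouched" paired site via (\ref{Jrec1}) or (\ref{Jrec2}) genuinely commutes with $(\pi_x\otimes\pi_y)\Delta(e_i)$ and drops it to the analogous operator on the shorter chain (with the appropriate index shift in the $i=n-1$ case). Once this localization statement is established, the whole lemma collapses to the three-line base-case computation above, and an entirely parallel argument will later give the companion statement $(\pi_x \otimes \pi_y)\Delta(f_i){\mathcal J}_{n,j}=0$ if it is needed.
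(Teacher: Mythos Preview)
Your proposal is correct and follows essentially the same approach as the paper: verify the $n=2$ base case directly from (\ref{eacts}) and the explicit ${\mathcal J}_{2,j}$, then induct on $n$ using (\ref{Jrec1}) to peel off the rightmost site when $1\le i\le n-2$ and (\ref{Jrec2}) to peel off the leftmost site when $i=n-1$. The paper states this in two lines; you have simply spelled out the details of the localization and the index shift.
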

\begin{proof}
By using (\ref{eacts}) and the above example, 
the case $n=2$ can be directly checked.
Then the assertion follows by induction on $n$ thanks to (\ref{Jrec1}) and (\ref{Jrec2}).
\end{proof}

For $0 \le l,m \le n$ define the following vector in 
$\W_l \otimes \W_m$:
\begin{align}
\xi_t = \xi^{l,m}_t = (|{\bf 0}_{n-s-t}\rangle \otimes |{\bf 0}_{n-s-t}\rangle) \bt
{\mathcal J}_{s,l-t}\bt (|{\bf 1}_{t}\rangle \otimes |{\bf 1}_{t}\rangle)
\quad
(s=l+m-2t)
\end{align}
for $(l+m-n)_+ \le t \le \min(l,m)$, where the symbol $(x)_+$ is defined 
after (\ref{norm1}).
We have set 
$|{\bf i}_t \rangle = 
|i\rangle^{\otimes t} \in V^{\otimes t}$
for $i=0,1$.
Note that $\xi^{l,m}_{\min(l,m)} 
= |{\bf e}_{>n-l}\rangle \otimes |{\bf e}_{>n-m}\rangle$.
Using Lemma \ref{le:eJ} one can show
\begin{proposition}\label{pr:sin1}
The weight vectors in $\W_l \otimes \W_m$ annihilated by 
$(\pi_x \otimes \pi_y)\Delta(e_i)$ for $1 \le i \le n-1$ are given by $\xi_t$ with 
$(l+m-n)_+ \le t \le \min(l,m)$ up to an overall scalar.
\end{proposition}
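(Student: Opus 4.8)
The plan is to establish two things: that each $\xi_t$ with $(l+m-n)_+\le t\le\min(l,m)$ is a singular vector (annihilated by $(\pi_x\ot\pi_y)\Delta(e_i)$ for all $1\le i\le n-1$), and that these exhaust all singular vectors, the latter by a character/multiplicity count. For the first point I would write $\xi_t$ in its three-block form: $n-s-t$ leading slots equal to $|0\rangle\ot|0\rangle$, then ${\mathcal J}_{s,l-t}$ on the next $s$ slots, then $t$ trailing slots equal to $|1\rangle\ot|1\rangle$, where $s=l+m-2t$. Recall from (\ref{actsA}) that for $1\le i\le n-1$ the operator $(\pi_x\ot\pi_y)\Delta(e_i)=1\ot e_i+e_i\ot k_i$ touches only slots $i$ and $i+1$ in either tensor factor, and that $e_i$ on a factor requires value $1$ at slot $i$ and value $0$ at slot $i+1$ in that factor. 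Hence: if $\{i,i+1\}$ lies inside the leading block or straddles its right edge, both occurrences of $e_i$ see a $0$ at slot $i$, so $\Delta(e_i)\xi_t=0$ termwise; if $\{i,i+1\}$ lies inside the trailing block or straddles its left edge, both occurrences see a $1$ at slot $i+1$, so again $\Delta(e_i)\xi_t=0$; and if $\{i,i+1\}$ is interior to the ${\mathcal J}$-block this is precisely Lemma \ref{le:eJ} applied to the $s$-slot factor. The recursions (\ref{Jrec1})--(\ref{Jrec2}) keep the bookkeeping at the block boundaries transparent, and the degenerate cases $n-s-t=0$, $t=0$ or $s=0$ present no difficulty.

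For the converse, distinct admissible $t$ give $\xi_t$ of distinct $(k_1,\dots,k_{n-1})$-weight, namely the dominant weight $\mu_t$ of the two-column partition $\lambda_t=(2^t\,1^{l+m-2t})$, so the $\xi_t$ are linearly independent. To see there are no further singular vectors I would restrict to the finite-type subalgebra $\overline{\mathcal U}_A(1,\dots,1)=U_{-q^{-1}}(A_{n-1})$ of (\ref{equiv}); since $q$, hence $-q^{-1}$, is generic, $\W_l\ot\W_m$ is a completely reducible finite-dimensional module, so the dimension of the space of vectors killed by all $e_i$, $1\le i\le n-1$, equals the number of irreducible constituents counted with multiplicity. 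By (\ref{actsA}) the character of $\W_l$ is the elementary symmetric polynomial $e_l(x_1,\dots,x_n)$, so $\mathrm{ch}(\W_l\ot\W_m)=e_le_m$; the Pieri rule expands this as $\sum_t s_{\lambda_t}$, the sum running over exactly the admissible $t$, and since the $s_{\lambda_t}=\mathrm{ch}\,V(\mu_t)$ are linearly independent Schur functions, each $V(\mu_t)$ occurs with multiplicity one and nothing else occurs. Therefore the singular space has dimension $\min(l,m)-(l+m-n)_++1$, is spanned by the $\xi_t$, and in particular each $\mu_t$-weight singular space is one-dimensional, which is the assertion.

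There is no real obstacle here: the only substantive input, namely that the combinatorially defined vectors ${\mathcal J}_{r,j}$ and hence the $\xi_t$ are genuinely singular, has already been isolated as Lemma \ref{le:eJ}, and the rest is the boundary bookkeeping above together with a standard Pieri-plus-complete-reducibility count. One could alternatively avoid invoking the Pieri rule and argue by induction on $n$, peeling off the first tensor slot, analyzing the action of $e_1$ via (\ref{Jrec1})--(\ref{Jrec2}), and matching weight-space dimensions against the case $n-1$, but the character computation is the shortest route.
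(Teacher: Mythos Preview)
Your proposal is correct and aligns with the paper's own treatment, which merely says ``Using Lemma~\ref{le:eJ} one can show'' and otherwise appeals to the known decomposition of antisymmetric tensor products for $U_{-q^{-1}}(A^{(1)}_{n-1})$. Your block-by-block verification of singularity is exactly the intended use of Lemma~\ref{le:eJ}, and your Pieri/complete-reducibility count is the standard way to establish that the list is exhaustive and multiplicity-free.
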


\subsubsection{Spectral decomposition}
A direct calculation shows
\begin{lemma}\label{le:srec1}
For $(l+m-n)_+ \le t < \min(l,m)$ the following relations hold:
\begin{align*}
(\pi_x \otimes \pi_y)\Delta(e_{n-l-m+t+1}\cdots e_{n-1} e_{n-t-1}\cdots e_1e_0)\xi_t &=
(-[2])^{\delta_{t,0}}q^{-1}(q^{l+m-2t}y-x)\xi_{t+1},\\
(\pi_x \otimes \pi_y)\Delta(f_0f_1\cdots f_{n-l-m+t} f_{n-1}f_{n-2}\cdots f_{n-t})\xi_t &=
(qx y)^{-1}(q^{l+m-2t}y-x)\xi_{t+1}.
\end{align*}
\end{lemma}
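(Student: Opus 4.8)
Both identities are of the same type --- a long product of simple root vectors sends $\xi_t$ to a scalar multiple of $\xi_{t+1}$ --- and the second is established by an argument parallel to the first, so I would concentrate on the chain of raising operators. The proof is a bookkeeping computation built on the explicit action (\ref{eacts}) and the recursions (\ref{Jrec1})--(\ref{Jrec2}). By (\ref{eacts}), an interior generator $e_i$ $(1\le i\le n-1)$ acts on a basis tensor $|{\bf m}\rangle\otimes|{\bf m}'\rangle$ by moving a single letter $1$ from site $i$ to site $i+1$ in exactly one of the two tensor factors, with coefficient $1$ or $(-q)^{m'_i-m'_{i+1}}$; the cyclic generator $e_0$ does the same between sites $n$ and $1$ and in addition carries $y$ (when the $1$ moves in the second factor) or $x$ (when it moves in the first). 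So applying the chain to $\xi_t$ amounts to transporting one letter $1$ across the frozen regions of $\xi_t$ and recording the accumulated monomial.

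I would carry this out step by step. Applying $e_0$ to $\xi_t$ (for $t\ge1$, where sites $1$ and $n$ lie in the $(0,0)$- and $(1,1)$-blocks) yields two families of terms, distinguished by whether the transported $1$ lies in the first or the second tensor factor, with coefficients $-qx$ and $y$. The generators $e_1,\dots,e_{n-t-1}$ then walk that $1$ rightward across the $(0,0)$-block --- each step contributing $1$ --- until the recursion (\ref{Jrec2}) absorbs it into the left end of ${\mathcal J}_{s,l-t}$, fixing the intermediate powers $q^{s-(l-t)}=q^{m-t}$; after this ${\mathcal J}_{s,l-t}$ has collapsed to ${\mathcal J}_{s-1,\bullet}$, and the tail generators $e_{n-1},e_{n-2},\dots,e_{n-l-m+t+1}$ together with (\ref{Jrec1}) (with its power $q^{l-t}$) move a $1$ into the right end, producing a new $(1,1)$ and shrinking ${\mathcal J}$ to ${\mathcal J}_{s-2,l-t-1}$ --- i.e.\ producing $\xi_{t+1}$. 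The substance of the computation is that all intermediate $q$-powers telescope and the two families recombine, the accumulated powers converting the ratio $y:-qx$ into $q^{l+m-2t}y:-qx$; hence the scalar $q^{-1}(q^{l+m-2t}y-x)$.

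The endpoint $t=0$ must be treated separately, since then the $(1,1)$-tail of $\xi_0$ is empty and $e_0$ acts directly on the right end of ${\mathcal J}_{s,l}$; because the two terms of (\ref{Jrec1}) place a $(1,0)$ or a $(0,1)$ at that site, the returning $1$ can be received in either tensor factor, and tallying both contributions through the whole chain produces the extra factor $-[2]$ recorded in the statement. The opposite endpoint $t=(l+m-n)_+$, where the $(0,0)$-block is empty, is handled by the mirror version of the same analysis. For small $n$ --- for instance $n=2$, using the displayed $r=2$ forms of ${\mathcal J}_{2,j}$ and (\ref{eacts}) directly --- the identity is verified by hand, and since the recursions (\ref{Jrec1})--(\ref{Jrec2}) strip a boundary site and shorten the chain in a controlled way, the general case may alternatively be organised as an induction on $n$.

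The main obstacle is precisely this coefficient bookkeeping: keeping the signs and the powers $(-q)^{m'_i-m'_{i+1}}$ from (\ref{eacts}) in step with the built-in weights $q^{\mathrm{inv}(\cdot)}$ of ${\mathcal J}_{s,l-t}$, checking the telescoping that isolates exactly $q^{l+m-2t}y-x$, and dealing correctly with the cyclic generator $e_0$ and with the two boundary regimes $t=0$ and $t=(l+m-n)_+$.
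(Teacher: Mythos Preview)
The paper offers no proof beyond the words ``A direct calculation shows,'' so your plan is already more detailed than what appears there. Your approach --- applying the operators one at a time via the explicit action (\ref{eacts}) and using the recursions (\ref{Jrec1})--(\ref{Jrec2}) to control how the transported $1$ enters and exits the ${\mathcal J}$-block --- is exactly the intended direct calculation, and your identification of the delicate points (the accumulated $(-q)$-powers, the cyclic step $e_0$, and the boundary cases $t=0$ and $t=(l+m-n)_+$) is accurate.

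One small caution on the narrative: the first chain $e_{n-t-1}\cdots e_1 e_0$ does not merely push the $1$ across the $(0,0)$-block and hand it to the left end of ${\mathcal J}_{s,l-t}$; the indices $e_{n-l-m+t+1},\ldots,e_{n-t-1}$ lie \emph{inside} the ${\mathcal J}$-block, so the $1$ is driven all the way through it to site $n-t$. The recursions (\ref{Jrec1})--(\ref{Jrec2}) are indeed the right tool to make this passage manageable (each basis term of ${\mathcal J}_{s,l-t}$ reacts differently to a given $e_i$, but the sum closes up), and the end result is the collapse to ${\mathcal J}_{s-2,l-t-1}$ you describe. Just be aware that the bookkeeping inside the block is where most of the work lives, not at its boundary. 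Otherwise your plan is sound and matches the paper's (unstated) method.
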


Set $z=x/y$ and 
let $R(z) \in \mathrm{End}(\W_l \otimes \W_m)$ be 
the quantum $R$ matrix satisfying (\ref{eqrc})
normalized as (\ref{rnor1}).
Due to Proposition \ref{pr:sin1} it has the spectral decomposition 
\begin{align}\label{spdec1}
PR(z) = \sum_{s=(l+m-n)_+}^{\min(l,m)} \rho_s(z){\mathscr P}^{l,m}_s,
\end{align}
where $P$ is defined after (\ref{eqrc})
and ${\mathscr P}^{l,m}_s: \W_l \otimes \W_m \rightarrow \W_m \otimes \W_l$ is 
the projector characterized by
\begin{align}
&{\mathscr P}^{l,m}_s : \xi^{l,m}_{s'} \mapsto \delta_{s,s'}\xi^{m,l}_s,
\label{pxi}\\
&(\pi_y \otimes \pi_x)\Delta(g){\mathscr P}^{l,m}_s 
=  {\mathscr P}^{l,m}_s (\pi_x \otimes \pi_y)\Delta(g)
\label{pcom}
\end{align} 
for all $g \in \overline{\mathcal U}_A(1,\ldots, 1)$.
See the end of Section \ref{subsec:U} for the definition of 
$\overline{\mathcal U}_A(1,\ldots, 1)$.
The combination $PR(z)$ is the intertwiner of 
$\pi_x\otimes \pi_y$ and $\pi_y\otimes \pi_x$ 
denoted by $\check{R}(z)$ in \cite{Ji}.
Substituting (\ref{spdec1}) into either (\ref{eR}) or (\ref{fR}) with $r=0$
one gets
\begin{align}\label{recr}
\frac{\rho_{s+1}(z)}{\rho_s(z)} = \frac{1-q^{l+m-2s}z}{z-q^{l+m-2s}}.
\end{align}
From (\ref{rnor1}) it follows that  
$\rho_{\min(l,m)}(z) = 1$ and 
\begin{align}\label{spd1}
PR(z) = \sum_{s=(l+m-n)_+}^{\min(l,m)}
\left(\prod_{i=s+1}^{\min(l,m)}
\frac{z-q^{l+m-2i+2}}{1-q^{l+m-2i+2}z}\right)
{\mathscr P}^{l,m}_s.
\end{align}

\subsection{Case $\kappa=n-1$}\label{ss:kan-2}

Consider 
${\mathcal U}_A$ of the form 
${\mathcal U}_A(1,\ldots,1,0)$.
We show 
that the ${\mathcal U}_A$-module 
$\W_l \otimes \W_m$ with 
$\W = V^{\otimes n-1} \otimes F$ is irreducible and 
present the spectral decomposition of the associated quantum $R$ matrix.
We assume $l,m \in \Z_{\ge 0}$.

\subsubsection{Singular vectors}
For $0 \le s \le \min(n-1,l,m)$
define the following vector in 
$\W_l \otimes \W_m$:
\begin{align}
\xi_s = \xi^{l,m}_s=
(|{\bf 0}_{n-s-1}\rangle \otimes |{\bf 0}_{n-s-1}\rangle) \bt
\sum_{j=0}^s (-1)^jq^{j(m-s+1)}
{\mathcal J}_{s,s-j}\bt
(|l+j-s\rangle \otimes |m-j\rangle).
\end{align}
The ${\mathcal J}_{s,s-j}$ is defined by (\ref{Jvec}).
Note that
$\xi_0 = |l{\bf e}_n \rangle \otimes |m{\bf e}_n\rangle$.

\begin{proposition}\label{pr:sin2}
The weight vectors in $\W_l \otimes \W_m$ annihilated by 
$(\pi_x \otimes \pi_y)\Delta(e_i)$ for $1 \le i \le n-1$ are given by $\xi_s$ with 
$0 \le s \le \min(n-1,l,m)$ up to an overall scalar.
\end{proposition}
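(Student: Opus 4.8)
The plan is to mimic the two-step structure used for the case $\kappa=n$ in Section~\ref{ss:kan-1}: first verify that each $\xi_s$ is killed by $(\pi_x\ot\pi_y)\Delta(e_i)$ for all $1\le i\le n-1$, and then prove that these are the only weight vectors with this property. Linear independence of the $\xi_s$ is free: for different $s$ the block ${\mathcal J}_{s,\bullet}$ occupies a different set of slots, so the $\xi_s$ lie in different weight spaces of $\overline{\mathcal U}_A(1^{n-1},0)$. Thus it suffices to settle existence and completeness.

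For existence I would separate $e_1,\dots,e_{n-2}$, which by (\ref{actsA}) act only on the first $n-1$ slots (all copies of $V$), from $e_{n-1}$, which couples slot $n-1$ to the Fock slot $n$. For $1\le i\le n-2$ the operators $e_i$ and $k_i$ see only positions $i,i+1\le n-1$, so $\Delta(e_i)=1\ot e_i+e_i\ot k_i$ applied to $\xi_s$ either meets the vacuum prefix $|{\bf 0}_{n-s-1}\rangle\ot|{\bf 0}_{n-s-1}\rangle$ and vanishes because the relevant occupation number is $0$, or acts internally to the block ${\mathcal J}_{s,\bullet}$, where a relabelled instance of Lemma~\ref{le:eJ} gives $0$ (the passive blocks and the Fock slot are untouched and ride along). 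The remaining generator $e_{n-1}$ is the real point: I would use the recursion (\ref{Jrec1}) to strip the last $V$-slot off ${\mathcal J}_{s,s-j}$, insert $e_{n-1}(|1\rangle\ot|a\rangle)=|0\rangle\ot|a+1\rangle$ and $e_{n-1}(|0\rangle\ot|a\rangle)=0$ on that slot together with the $k_{n-1}$-eigenvalue on the Fock slot, and observe that $\Delta(e_{n-1})\xi_s$ collapses to a single sum in $j$ in which the contribution coming from index $j$ and the one coming from $j+1$ carry identical $V$- and $F$-content; the weights $(-1)^jq^{j(m-s+1)}$ are chosen precisely so that these two contributions cancel. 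This is a direct but bookkeeping-heavy computation of exactly the kind recorded in Lemma~\ref{le:srec1}.

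For completeness I would first isolate the constraints $\Delta(e_i)=0$ for $1\le i\le n-2$. By the relations (\ref{urel}) the elements $e_i,f_i,k_i$ $(1\le i\le n-2)$ generate a copy of $U_{-q^{-1}}(\mathfrak{sl}_{n-1})$ acting on the first $n-1$ slots and trivially on the Fock slot, and restricted to this subalgebra one has $\W_l\cong\bigoplus_a\Lambda^{l-a}\mathbb{C}^{n-1}\ot|a\rangle$; hence the highest weight vectors of $\W_l\ot\W_m$ for it are the highest weight vectors of the multiplicity-free $U_{-q^{-1}}(\mathfrak{sl}_{n-1})$-tensor products $\Lambda^{p}\mathbb{C}^{n-1}\ot\Lambda^{r}\mathbb{C}^{n-1}$, which one identifies with the ${\mathcal J}$-type vectors. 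This forces any weight vector annihilated by $e_1,\dots,e_{n-2}$ of the weight carried by ${\mathcal J}_{s,\bullet}$ to be a combination $\sum_j c_j\,(|{\bf 0}_{n-s-1}\rangle\ot|{\bf 0}_{n-s-1}\rangle)\bt{\mathcal J}_{s,j}\bt(|l-j\rangle\ot|m-s+j\rangle)$, with $j$ restricted so that the Fock occupations stay nonnegative. Feeding this into $\Delta(e_{n-1})=0$ and redoing the computation from the existence step turns the constraint into the three-term relation $c_{j+1}q^{m-s+j+1}+c_jq^j=0$; its solution space is one-dimensional, generated by $\xi_s$, and keeping track of when the index set of the output terms overruns the range of the $c_j$ is exactly what forces a nonzero solution to exist only for $0\le s\le\min(n-1,l,m)$. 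I expect this last step — the boundary analysis of the recursion, together with making precise the identification of the highest weight vectors of $\Lambda^p\mathbb{C}^{n-1}\ot\Lambda^r\mathbb{C}^{n-1}$ with ${\mathcal J}$-type vectors — to be the main obstacle; once the recursion and the admissible range of $s$ are pinned down, the count $\min(n-1,l,m)+1$ and hence the proposition follow.
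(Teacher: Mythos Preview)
The paper does not actually supply a proof of Proposition~\ref{pr:sin2}; it is stated without argument, in the same spirit as Proposition~\ref{pr:sin1} (which is only accompanied by the remark ``Using Lemma~\ref{le:eJ} one can show''). So there is no argument in the paper to compare yours against; your two-step plan (existence via Lemma~\ref{le:eJ} and the recursion~(\ref{Jrec1}), then completeness via the $U_{-q^{-1}}(\mathfrak{sl}_{n-1})$-decomposition) is a reasonable way to fill in what the authors have left implicit, and your computation of the two-term recursion on the $c_j$ is correct.

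There is, however, a genuine gap in your completeness step. When you invoke Proposition~\ref{pr:sin1} (with $n$ replaced by $n-1$) to describe the $U_{-q^{-1}}(\mathfrak{sl}_{n-1})$-singular vectors inside a fixed $\Lambda^{p}\mathbb{C}^{n-1}\otimes\Lambda^{r}\mathbb{C}^{n-1}$, the answer is not just a bare ${\mathcal J}$ block: it is
\[
(|{\bf 0}_{n-1-s'-t'}\rangle\otimes|{\bf 0}_{n-1-s'-t'}\rangle)\bt
{\mathcal J}_{s',\,p-t'}\bt(|{\bf 1}_{t'}\rangle\otimes|{\bf 1}_{t'}\rangle),
\qquad s'=p+r-2t',
\]
with a possible $|{\bf 1}_{t'}\rangle\otimes|{\bf 1}_{t'}\rangle$ tail ($t'\ge 0$). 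Your ansatz $\sum_j c_j(|{\bf 0}_{n-s-1}\rangle\otimes|{\bf 0}_{n-s-1}\rangle)\bt{\mathcal J}_{s,j}\bt(|l-j\rangle\otimes|m-s+j\rangle)$ covers only the case $t'=0$; you never rule out singular vectors at weights whose $V$-profile contains a $2$, i.e.\ those with $t'\ge 1$. This extra case must be handled for the ``only'' direction of the proposition.

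Fortunately it is easy once noticed. For $t'\ge1$ the last $V$-slot of every candidate is $|1\rangle\otimes|1\rangle$, so $\Delta(e_{n-1})$ produces, from the $e_{n-1}\otimes k_{n-1}$ piece alone, a term with last $V$-slot $|1\rangle\otimes|0\rangle$ that cannot be cancelled by the $1\otimes e_{n-1}$ piece (which yields last $V$-slot $|0\rangle\otimes|1\rangle$). Projecting onto the $|1\rangle\otimes|0\rangle$ sector, the images for different $a$ have distinct Fock components $|a\rangle\otimes|c_n-a+1\rangle$ and distinct ${\mathcal J}$-indices, hence are linearly independent; this forces all coefficients $c_a$ to vanish. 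Adding this short paragraph would close the gap. (A minor slip: you call the relation on the $c_j$ ``three-term'', but as you wrote it---and as the computation confirms---it is a two-term recursion.)
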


\subsubsection{Spectral decomposition}
\begin{lemma}\label{le:srec2}
For $1 \le s \le \min(n-1,l,m)$ the following relations hold:
\begin{align*}
&(\pi_x \otimes \pi_y)\Delta(
e_{n-1}\cdots e_1 e_{n-s}\cdots e_{n-1} e_0) \xi_s = 
(y-q^{l+m-2s+2} x)(-[2])^{\delta_{s,n-1}}\xi_{s-1},\\
&(\pi_x \otimes \pi_y)\Delta(f_0f_1\cdots f_{n-s-1}) \xi_s =
(xy)^{-1}(y-q^{l+m-2s+2} x)\xi_{s-1}.
\end{align*}
\end{lemma}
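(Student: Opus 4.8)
The plan is to verify both identities by direct computation, reducing the action of the long strings of generators to the two-term recursions (\ref{Jrec1})--(\ref{Jrec2}) characterizing the vectors ${\mathcal J}_{r,j}$. The key structural fact, read off from (\ref{actsA}) and (\ref{Delta}), is that on $\W=V^{\otimes(n-1)}\otimes F$ each $(\pi_x\otimes\pi_y)\Delta(e_i)$ and $(\pi_x\otimes\pi_y)\Delta(f_i)$ with $1\le i\le n-1$ changes only the two adjacent tensor components $i,i+1$, while $(\pi_x\otimes\pi_y)\Delta(e_0)$ and $(\pi_x\otimes\pi_y)\Delta(f_0)$ shift a unit between the first component and the $F$-component $n$ and carry the parameters $x,y$. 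This is parallel to the proof of Lemma~\ref{le:srec1}.

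For the $f$-identity I would proceed by induction, applying the operators of $f_0f_1\cdots f_{n-s-1}$ from the right. Writing $\xi_s=(|{\bf 0}_{n-s-1}\rangle\otimes|{\bf 0}_{n-s-1}\rangle)\bt\eta_s$, one first resolves the leading slot of each ${\mathcal J}_{s,s-j}$ occurring in $\eta_s$ via (\ref{Jrec2}); as $f_{n-s-1},\dots,f_1$ are applied in turn, the two summands of each $\Delta(f_i)=f_i\otimes1+k_i^{-1}\otimes f_i$ select one branch each and carry a unit leftward across the vacuum stretch, producing explicit powers of the $q_i$ (with $q_i=-q^{-1}$ for $i<n$) from the $k_i^{-1}$ factors. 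The final $f_0$ deposits this unit in the $F$-component $n$, contributing the $[m_1]$-type coefficients together with the scalars $x^{-1},y^{-1}$. The outcome has the shape of $\xi_{s-1}$, and summing the resulting geometric-type series over $j$ and reorganizing the coefficients $(-1)^jq^{j(m-s+1)}$ with the standard $q$-binomial identities pins the scalar down to $(xy)^{-1}(y-q^{l+m-2s+2}x)$.

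The $e$-identity is handled symmetrically, now using (\ref{Jrec1}), with $e_0$ first moving a unit out of the $F$-component and the runs $e_{n-1}\cdots e_1$ and $e_{n-s}\cdots e_{n-1}$ redistributing it through the $V$-factors. I expect the genuine obstacle to be the boundary case $s=n-1$: there the leading vacuum block disappears, the index $1$ is hit twice in immediate succession by the two runs, and the cross terms produced when $\Delta$ is applied to the repeated generator --- governed by $1+D_{1,1}=1+q_1q_2$ together with the $k_1$-weights --- are what generate the extra factor $-[2]$; verifying that this matches the generic formula, and handling any further degeneration at $n=2$ (which is in any case covered by Example~\ref{ex:r10} and the decomposition (\ref{spd2})), is where I would spend the most care. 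A cleaner route that avoids most of the bookkeeping would be to show instead that the vector obtained from the full string is annihilated by $(\pi_x\otimes\pi_y)\Delta(e_i)$ for all $1\le i\le n-1$; by Proposition~\ref{pr:sin2} it is then a scalar multiple of some $\xi_t$, a weight comparison forces $t=s-1$, and a single well-chosen matrix element fixes the scalar.
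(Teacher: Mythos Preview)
The paper gives no proof of this lemma; it is stated bare, in the same spirit as Lemma~\ref{le:srec1} (which is preceded by ``A direct calculation shows''). Your sketch therefore provides more than the paper does, and the strategy you outline --- peeling off one tensor slot at a time via the recursions (\ref{Jrec1})--(\ref{Jrec2}), tracking the $k_i^{\pm1}$-weights through the coproduct, and watching the boundary $s=n-1$ for the $-[2]$ --- is exactly the kind of direct computation the authors have in mind. Your alternative route via Proposition~\ref{pr:sin2} (show the output is singular, match the weight, fix the scalar by one coefficient) is also sound and arguably tidier; either approach is acceptable here.
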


Set $z=x/y$ and 
let $R(z) \in \mathrm{End}(\W_l \otimes \W_m)$ 
be the quantum $R$ matrix satisfying (\ref{eqrc})
normalized as (\ref{rnor2}).
Due to Proposition \ref{pr:sin2} it has the spectral decomposition
\begin{align}\label{spdec2}
PR(z) = \sum_{s=0}^{\min(n-1,l,m)} \rho_s(z){\mathscr P}^{l,m}_s,
\end{align}
where the projector 
${\mathscr P}^{l,m}_s: \W_l \otimes \W_m \rightarrow \W_m \otimes \W_l$ is 
characterized by (\ref{pxi}) and 
(\ref{pcom}) for all $g \in \overline{\mathcal U}_A(1,\ldots, 1,0)$.
Substituting (\ref{spdec2}) into either (\ref{eR}) or (\ref{fR}) with $r=0$
one gets formally the same relation as (\ref{recr}).
In (\ref{rnor2}) (and also (\ref{norm2})), 
the $i$ should be taken as $n$, which leads to
$PR(z) (|l{\bf e}_n\rangle \otimes |m{\bf e}_n\rangle)
=|m{\bf e}_n\rangle \otimes |l{\bf e}_n\rangle$.
This implies $\rho_0(z)=1$ and 
\begin{align}\label{spd2}
PR(z) = \sum_{s=0}^{\min(n-1,l,m)}
\left(\prod_{i=1}^s
\frac{1-q^{l+m-2i+2}z}{z-q^{l+m-2i+2}}\right)
{\mathscr P}^{l,m}_{s}.
\end{align}
This formally coincides with (\ref{spd1}) up to an overall factor and the 
range of $s$.

\subsubsection{Irreducibility of 
${\mathcal W}_l \otimes {\mathcal W}_m$}\label{ss:akn-1}

Consider the direct sum decomposition
\begin{align*}
{\mathcal W}_l \otimes {\mathcal W}_m
&= \bigoplus_{0\le j\le s_l, 0 \le k \le s_m} X_{j,k}
\quad (s_k=\min(k,n-1)),\\
X_{j,k} &= \bigoplus \,\C(q)|i_1,\ldots, i_{n-1},l-j\rangle
\otimes  |i'_1,\ldots, i'_{n-1},m-k\rangle,
\end{align*}
where the latter direct 
sum is over $i_1,\ldots, i_{n-1}, i'_1, \ldots, i'_{n-1} \in \{0,1\}$
such that $(i_1+\cdots+ i_{n-1}, i'_1 + \cdots  + i'_{n-1})=(j,k)$.
The subalgebra of ${\mathcal U}_A(1,\ldots, 1,0)$
generated by $e_i,f_i,k^{\pm 1}_i$ with $1 \le i \le n-2$
(with the Serre relations)
is isomorphic to $U_{-q^{-1}}(A_{n-2})$. 
Let the same symbol denote its coproduct action.
Then we have
\begin{lemma}\label{le:lh}
\begin{align*}
X_{j,k} &= U_{-q^{-1}}(A_{n-2})u_{j,k},\\
u_{j,k} &:=|\overbrace{1,\ldots, 1}^j,0,\ldots,0,l-j\rangle \otimes 
|0,\ldots, 0,\overbrace{1,\ldots,1}^k,m-k\rangle.
\end{align*}
\end{lemma}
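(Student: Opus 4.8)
The plan is to recognize $X_{j,k}$, as a module over the subalgebra $U_{-q^{-1}}(A_{n-2})$ generated by $e_i,f_i,k^{\pm 1}_i\ (1\le i\le n-2)$, as a tensor product of two fundamental representations, and then to apply Proposition \ref{pr:ht*lt}. First I would verify that $X_{j,k}$ is indeed a $U_{-q^{-1}}(A_{n-2})$-submodule of $\W_l\otimes\W_m$: by (\ref{actsA}) the generators $e_i,f_i,k^{\pm 1}_i$ with $1\le i\le n-2$ act nontrivially only on the $i$-th and $(i+1)$-st tensor components, which for $\W=V^{\otimes n-1}\otimes F$ both lie among the first $n-1$ ($V$-)slots, and $e_i,f_i$ preserve the total number of $1$'s among those slots. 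Hence the subspace of $\W_l\otimes\W_m$ consisting of vectors whose $n$-th components are the fixed integers $l-j$ and $m-k$ and which have $j$ and $k$ ones in the first $n-1$ slots of the two tensor factors is stable, and this subspace is exactly $X_{j,k}$.

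Next I would show that the weight-$j$ subspace $M_j\subset V^{\otimes n-1}$, tensored with the one-dimensional spectator factor $\C|l-j\rangle$, carries via (\ref{actsA}) precisely the $j$-th fundamental ($\Lambda^j$-type) representation of $U_{-q^{-1}}(A_{n-2})$: identifying $|i_1,\dots,i_{n-1}\rangle$ with the $j$-element set $\{s:i_s=1\}$, the operator $e_i$ (resp.\ $f_i$) simply moves one $1$ from slot $i$ to slot $i+1$ (resp.\ from $i+1$ to $i$) with coefficient $[1]=1$, all $q$-dependence residing in the Cartan part through $q_i=-q^{-1}$; since $-q^{-1}$ is generic this minuscule module is irreducible. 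Its unique highest weight vector, killed by all $e_i$ with $1\le i\le n-2$, is the word with all $1$'s at the right end, $|0^{\,n-1-j}1^{\,j}\rangle$, and its lowest weight vector, killed by all $f_i$, is $|1^{\,j}0^{\,n-1-j}\rangle$. Consequently $X_{j,k}\cong M_j\otimes M_k$ as $U_{-q^{-1}}(A_{n-2})$-modules under the coproduct, and $u_{j,k}$ equals the tensor product of the lowest weight vector of $M_j$ with the highest weight vector of $M_k$.

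It then remains to invoke Proposition \ref{pr:ht*lt} with $\geh$ of type $A_{n-2}$, deformation parameter $-q^{-1}$, $V=M_j$, $V'=M_k$, and the distinguished vector $u_l\ot u'_h$: this yields $X_{j,k}=U_{-q^{-1}}(A_{n-2})\,u_{j,k}$, which is the assertion. For $n=2$ the algebra $A_{n-2}$ is trivial and each $X_{j,k}$ is one-dimensional, so nothing is to be proved.

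The main obstacle is the middle step, namely confirming that the natural basis of $M_j$ genuinely realizes the $j$-th fundamental module of $U_{-q^{-1}}(A_{n-2})$ — so that $M_j$ is irreducible with exactly the two extreme weight vectors written above — rather than merely a module of the correct dimension. Once this identification and the resulting irreducibility are in hand, the conclusion is an immediate consequence of Proposition \ref{pr:ht*lt} together with the combinatorial description of $X_{j,k}$.
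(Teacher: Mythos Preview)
Your proposal is correct and follows essentially the same route as the paper: identify $X_{j,k}$ as the tensor product of two antisymmetric tensor (fundamental) representations of $U_{-q^{-1}}(A_{n-2})$, recognize $u_{j,k}$ as $(\text{lowest weight vector})\otimes(\text{highest weight vector})$, and invoke Proposition~\ref{pr:ht*lt}. The paper's proof says exactly this in two sentences; your version simply spells out the verification that $X_{j,k}$ is a submodule and that the extreme weight vectors are as claimed, and flags the irreducibility of the minuscule module $M_j$ as the one point needing care---which is standard for generic deformation parameter.
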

\begin{proof}
As an element of a $U_{-q^{-1}}(A_{n-2})$-module,
$u_{j,k}$ is 
$(\text{lowest wt.~vec.})\otimes (\text{highest wt.~vec.})$
in the tensor product of the
antisymmetric tensor representations of order $j$ and $k$.
Thus the assertion follows from Proposition \ref{pr:ht*lt}.
\end{proof}

We use the notation 
${\bf e}_{[g,h]} = {\bf e}_g+{\bf e}_{g+1}+\cdots + {\bf e}_h$.
The notation 
$|{\bf e}_{i_1}+\cdots + {\bf e}_{i_a}+\bullet\rangle 
\otimes 
|{\bf e}_{i'_1}+\cdots + {\bf e}_{i'_b}+\bullet\rangle$
will be used only when $\max(i_1, \ldots, i_a, i'_1, \ldots, i'_b)<n$ and 
is to be understood as 
$|{\bf e}_{i_1}+\cdots + {\bf e}_{i_a}+(l-a){\bf e}_n\rangle 
\otimes 
|{\bf e}_{i'_1}+\cdots + {\bf e}_{i'_b}+(m-b){\bf e}_n\rangle$.
Thus $u_{j,k} = |{\bf e}_{[1,j]}+\bullet\rangle \otimes
|{\bf e}_{[n-k,n-1]}+\bullet\rangle$.
\begin{proposition}\label{pr:irrAn-1}
The ${\mathcal U}_A(1,\ldots, 1,0)$-module 
${\mathcal W}_l \otimes {\mathcal W}_m$ is irreducible.
\end{proposition}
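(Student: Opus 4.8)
The plan is to show that any nonzero $\overline{\mathcal U}_A(1,\ldots,1,0)$-submodule $M$ (equivalently any nonzero ${\mathcal U}_A$-submodule) of ${\mathcal W}_l\otimes{\mathcal W}_m$ must be the whole space. The strategy has three layers. First, use the weight decomposition: since the $k_i$ act diagonally on the basis $|{\bf i}\rangle\otimes|{\bf j}\rangle$ and the module is finite-dimensional in each weight space, $M$ contains a weight vector; by applying the $e_i$ for $1\le i\le n-1$ repeatedly we may push it into the space of vectors annihilated by all these $e_i$, which by Proposition \ref{pr:sin2} is spanned by the singular vectors $\xi_s$, $0\le s\le\min(n-1,l,m)$. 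So $M$ contains some $\xi_s$ (or, if $M$ happens to be contained in a single weight space annihilated by all $e_i$, then $M$ is spanned by one $\xi_s$ and we must still generate everything from it — this is handled below).

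Second, show that from any single $\xi_s$ one recovers the vacuum-type vector $\xi_0 = |l{\bf e}_n\rangle\otimes|m{\bf e}_n\rangle$, hence $\xi_0\in M$. For this I would invoke Lemma \ref{le:srec2}: applying the displayed lowering operator $(\pi_x\otimes\pi_y)\Delta(f_0f_1\cdots f_{n-s-1})$ to $\xi_s$ yields a nonzero multiple of $\xi_{s-1}$ (the scalar $(xy)^{-1}(y-q^{l+m-2s+2}x)$ is nonzero for generic $x,y$), and iterating $s$ times gives $\xi_0\in M$ up to a nonzero scalar. Third, show that $\xi_0$ generates all of ${\mathcal W}_l\otimes{\mathcal W}_m$. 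Here the key tool is the decomposition ${\mathcal W}_l\otimes{\mathcal W}_m=\bigoplus_{j,k}X_{j,k}$ together with Lemma \ref{le:lh}, which says $X_{j,k}=U_{-q^{-1}}(A_{n-2})\,u_{j,k}$. Since $\xi_0=u_{0,0}$, it suffices to reach each $u_{j,k}$ from $\xi_0$ using the full ${\mathcal U}_A(1,\ldots,1,0)$ action (which includes $e_0,f_0$), and then apply $U_{-q^{-1}}(A_{n-2})\subset{\mathcal U}_A$ to fill out each block $X_{j,k}$. Concretely I would use Lemma \ref{le:srec2} in the reverse direction, i.e.\ the operator $(\pi_x\otimes\pi_y)\Delta(e_{n-1}\cdots e_1 e_{n-s}\cdots e_{n-1}e_0)$ raising $\xi_{s-1}$ to $\xi_s$, to move between the singular vectors $\xi_s$; combined with the $U_{-q^{-1}}(A_{n-2})$-action and with further applications of $f_0,e_0$ (which shift occupation between the $F$-slot and the $V$-slots), one reaches an arbitrary $u_{j,k}$. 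The cyclicity $u_{j,k}\mapsto X_{j,k}$ from Proposition \ref{pr:ht*lt} (via Lemma \ref{le:lh}) then closes the argument.

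A cleaner packaging of the last step: it is enough to prove that the vacuum $|{\bf 0}\rangle\otimes|{\bf 0}\rangle$-analogue, namely $|l{\bf e}_n\rangle\otimes|m{\bf e}_n\rangle$, lies in $M$ and is cyclic. For cyclicity, observe that $e_0,f_0$ together with $U_{-q^{-1}}(A_{n-2})$ let one raise and lower the last ($F$-valued) coordinate while redistributing the first $n-1$ ($V$-valued) coordinates; by an explicit but routine induction on $|{\bf a}|+|{\bf b}|$ (or on the pair $(j,k)$ labeling $X_{j,k}$) every basis vector is reached. I would spell this out by first generating all $u_{j,k}$ and then citing Lemma \ref{le:lh}.

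The main obstacle I anticipate is the cyclicity of $\xi_0$, i.e.\ Step three: Lemmas \ref{le:lh} and \ref{le:srec2} handle the ``vertical'' structure (fixed $(j,k)$, and moving between adjacent singular vectors), but connecting all the blocks $X_{j,k}$ to the single vector $\xi_0$ requires care because the actions of $e_0$ and $f_0$ only touch the first coordinate and can vanish when $\epsilon_1=1$ forces $m_1\in\{0,1\}$. One must check that the composite operators in Lemma \ref{le:srec2} never annihilate the relevant $\xi_s$ (generic $x,y$ guarantees the scalar factors are nonzero, but one should confirm the intermediate vectors stay in the allowed index range), and that the $U_{-q^{-1}}(A_{n-2})$-orbit of the $\xi_s$'s together with $e_0,f_0$ indeed sweeps out every $u_{j,k}$. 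Once that combinatorial bookkeeping is done, irreducibility follows since every nonzero submodule contains some $\xi_s$, hence $\xi_0$, hence everything.
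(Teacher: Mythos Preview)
Your overall framework matches the paper's: find a singular vector $\xi_s$ in any nonzero submodule, descend to $\xi_0=u_{0,0}$, then show $\xi_0$ is cyclic by reaching every $u_{j,k}$ and invoking Lemma~\ref{le:lh}. The first two steps are fine and are exactly what the paper does.

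The gap is in your third step. You propose to ``use Lemma~\ref{le:srec2} in the reverse direction, i.e.\ the operator $(\pi_x\otimes\pi_y)\Delta(e_{n-1}\cdots e_1 e_{n-s}\cdots e_{n-1}e_0)$ raising $\xi_{s-1}$ to $\xi_s$'', but this is a misreading of the lemma: \emph{both} formulas in Lemma~\ref{le:srec2} send $\xi_s$ to a multiple of $\xi_{s-1}$. Neither provides a raising map $\xi_{s-1}\mapsto\xi_s$, so you have no mechanism for climbing up the tower of singular vectors, and the rest of your sketch (``combined with the $U_{-q^{-1}}(A_{n-2})$-action and with further applications of $f_0,e_0$'') is too vague to compensate. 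You correctly flag cyclicity of $\xi_0$ as the main obstacle, but you do not actually overcome it.

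The paper's resolution is a genuine induction on $j+k$ that does not pass through the $\xi_s$'s at all. For $j+k=s\le n-1$ it first produces $u_{s,0}$ by writing down $s+1$ vectors $\zeta_0,\ldots,\zeta_s\in X_s$, expressing $s+1$ known elements of $X_{s-1}$ (already generated) under suitable strings of $f_i$'s and one $e_0$ as linear combinations of the $\zeta_j$, and checking the coefficient matrix is invertible for generic $x,y$; then it moves from $X_{j,k}$ to $X_{j-1,k+1}$ by a single explicit $f$-string calculation. For $j+k\ge n$ a direct one-line computation suffices. This linear-system argument is the missing idea in your proposal; without something of this kind, the action of $e_0,f_0$ alone (which only touches the first $V$-slot and can vanish) is not enough to connect the blocks $X_{j,k}$.
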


\begin{proof}
Let $W$ be a nonzero submodule of 
${\mathcal W}_l \otimes {\mathcal W}_m$.
Due to Lemma \ref{le:lh} it suffices to show that 
all the $u_{j,k}$ are generated from a vector in $W$.
We show this by induction on $j+k\ge 0$.
By Proposition \ref{pr:sin2} and Lemma \ref{le:srec2},
we can generate $u_{0,0}=\xi_0$ by applying $e_i$'s and $f_i$'s 
appropriately to any nonzero vector in $W$. Thus $j+k=0$ case is true.
Set $X_s = \oplus_{j+k=s}X_{j,k}$.
Let us show that all the $u_{j,k}$ with $j+k=s$ are generated 
by assuming that $X_{s-1}$ has already been generated.

(i) Case $s \le n-1$.
{\em Step 1}.  We show that $X_{s,0}$ is generated.
Set
\begin{align*}
\zeta_s = u_{s,0},\;\;
\zeta_j = |{\bf e}_{[1,j]}+{\bf e}_{[j+2,s]}+\bullet\rangle\otimes
|{\bf e}_{j+1}+\bullet\rangle \quad (0 \le j \le s-1).
\end{align*} 
They are vectors in $X_s$. 
We have ($(\pi_x\otimes \pi_y)\Delta(e_i)$ simply denoted by $e_i$ and 
similarly for $f_i$)
\begin{align*}
&c. f_{j+1}f_{j+2}\cdots f_{n-1} (
|{\bf e}_{[1,j]}+{\bf e}_{[j+2,s]}+\bullet\rangle \otimes |m {\bf e}_n\rangle)
= \zeta_{j+1} - q \zeta_{j}\quad (0 \le j \le s-2),\\
&c. f_{s}f_{s+1}\cdots f_{n-1} u_{s-1,0}
= [l-s+1]\zeta_{s} +q^{s-l-1}[m] \zeta_{s-1},\\
&c. e_{0} (
|{\bf e}_{[2,s]}+\bullet\rangle \otimes |m {\bf e}_n\rangle)
=xq^{-m}[l-s+1]\zeta_s+y[m]\zeta_0. 
\end{align*}
where $c.$ means multiplication by a nonzero rational function of $q$ 
which does not involve $x$ and $y$.
Regarding them as the $s\!+\!1$ linear equations on 
$\zeta_0,\ldots, \zeta_s$, one finds that the coefficient matrix is invertible
for generic $x$ and $y$.
Moreover all the lhs belong to $X_{s-1}$.
Thus $u_{s,0}=\zeta_s$ is generated.
Then by Lemma \ref{le:lh},  $X_{s,0}$ is generated.

{\em Step 2}. Set $s=j+k$. We show that 
$X_{j-1,k+1}$ is generated assuming that $X_{j,k}$ 
(and $X_{s-1}$) are already generated.
This claim follows from $(1 \le j \le s)$
\begin{align*}
&f_{j+k}\cdots f_{n-2}f_{n-1}(
|{\bf e}_{[1,j-1]}+\bullet\rangle 
\otimes |{\bf e}_{[j,j+k-1]}+\bullet\rangle)\\
&= q^{j-l-1}[m-k]|{\bf e}_{[1,j-1]}+\bullet\rangle 
\otimes |{\bf e}_{[j,j+k]}+\bullet\rangle
+[l-j+1]|{\bf e}_{[1,j-1]}+{\bf e}_{j+k}+\bullet\rangle 
\otimes |{\bf e}_{[j,j+k-1]}+\bullet\rangle.
\end{align*} 
The lhs belongs to $X_{j-1,k}\subseteq X_{s-1}$ and 
the second term on the rhs does to $X_{j,k}$.
Therefore 
$|{\bf e}_{[1,j-1]}+\bullet\rangle \otimes |{\bf e}_{[j,j+k]}+\bullet\rangle$ is 
generated. 
Applying $e_{n-2}e_{n-3}\cdots e_{j+k}$ to it we get $u_{j-1,k+1}$.
Then $X_{j-1,k+1}$ is generated by Lemma \ref{le:lh}.

By Step 1 and applying Step 2 repeatedly in the order $j=s,s-1,\ldots, 1$, 
we get $X_s$. 

(ii) Case $s \ge n$.
By the induction we assume that $u_{j',k'} \in X_{s-1}$ is already generated.
From $j'+k'=s-1\ge n-1$, we have
\begin{align*}
c. f_{j'+1}\cdots f_{n-2}f_{n-1} u_{j',k'} = u_{j'+1,k'},\quad
c. e_{k-1}\cdots e_1 e_0 u_{j',k'} = y u_{j',k'+1}.
\end{align*}
Thus $X_s$ is generated.
By (i) and (ii) the induction step has been proved.
\end{proof}

\subsection{Case $0 \le \kappa\le n-2$}\label{ss:kan-3}

Consider ${\mathcal U}_A$ of the form 
${\mathcal U}_A(1^\kappa,0^{n-\kappa})$ with $0 \le \kappa \le n-2$.
We show that the ${\mathcal U}_A$-module 
$\W_l \otimes \W_m$ with 
$\W = V^{\otimes \kappa} \otimes F^{\otimes n-\kappa}$ 
is irreducible and 
present the spectral decomposition of the associated quantum $R$ matrix.
We assume $l,m \in \Z_{\ge 0}$ and $n \ge 2$.
Due to $\kappa \le n-2$, the rightmost two components in 
$\W$ is $F \otimes F$.

\subsubsection{Singular vectors}
For $0 \le s \le \min(l,m)$ introduce the 
following vector in $\W_l \otimes \W_m$:
\begin{align}\label{xi4}
\xi_s = \xi^{l,m}_s=
(|{\bf 0}_{n-2}\rangle \otimes |{\bf 0}_{n-2}\rangle) \bt
\sum_{j=0}^s (-1)^jq^{j(2s-m-j-1)+ms}{s \brack j}
(|j,l-j\rangle \otimes |s-j,m-s+j\rangle),
\end{align}
where 
$|{\bf 0}_{n-2}\rangle = |0\rangle \otimes \cdots
\otimes |0\rangle 
\in V^{\otimes \kappa}\otimes F^{\otimes n-\kappa-2}$.
Note that $\xi_0 = |l{\bf e}_n\rangle \otimes |m{\bf e}_n\rangle$.

\begin{proposition}\label{pr:sin3}
The weight vectors in $\W_l \otimes \W_m$ annihilated by 
$(\pi_x \otimes \pi_y)\Delta(e_i)$ for $1 \le i \le n-1$ 
are given by $\xi_s$ with 
$0 \le s \le \min(l,m)$ up to an overall scalar.
\end{proposition}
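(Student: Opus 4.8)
The plan is to follow the template already used for Propositions \ref{pr:sin1} and \ref{pr:sin2}: first verify that each $\xi_s$ in \eqref{xi4} is a singular vector for $\overline{\mathcal U}_A$ (the subalgebra generated by $e_i$, $1\le i\le n-1$), and then show by a dimension/weight counting argument that these exhaust the singular vectors. Since the rightmost two tensor components of $\W$ are $F\otimes F$ and the $\xi_s$ are supported on the vacuum in all other slots, the whole computation effectively reduces to a rank-one problem on $F\otimes F$ for the generators $e_{n-1}$, together with a trivial check for $e_i$ with $i\le n-2$.

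First I would check $\Delta(e_i)\xi_s=0$ for $1\le i\le n-2$: by \eqref{actsA} the operator $e_i$ moves a box from component $i$ to component $i+1$, and since every term of $\xi_s$ has $m_1=\cdots=m_{n-2}=0$ (the $|{\bf 0}_{n-2}\rangle$ prefix), both $\pi_x(e_i)$ and $\pi_y(e_i)$ annihilate each summand. Hence $\Delta(e_i)\xi_s=0$ trivially. The substance is the case $i=n-1$: here $\Delta(e_{n-1}) = 1\otimes e_{n-1} + e_{n-1}\otimes k_{n-1}$, and acting on $|j,l-j\rangle\otimes|s-j,m-s+j\rangle$ (reading off only the last two coordinates, with $q_{n-1}=q_n=q$) one gets a two-term expression with coefficients $[m-s+j]$, $[l-j]$, and the powers of $q_n$ from $k_{n-1}$. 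Summing against the coefficients $(-1)^jq^{j(2s-m-j-1)+ms}\binom{s}{j}_{q}$ and reindexing $j\mapsto j-1$ in one of the two sums, the $q$-binomial Pascal identity $\binom{s}{j}_q = \binom{s-1}{j}_q + q^{s-j}\binom{s-1}{j-1}_q$ (or its companion) should force a telescoping cancellation. This is the one genuine computation; I expect it to be short but it is the place where the precise exponent $j(2s-m-j-1)+ms$ in \eqref{xi4} is pinned down, so getting the bookkeeping right is the main obstacle.

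Next, for the converse, I would count. A weight vector killed by all $e_i$ with $1\le i\le n-1$ is a highest weight vector for the copy of $U_{-q^{-1}}(A_{n-1})\big|_{\text{finite}}$ inside $\overline{\mathcal U}_A$ acting on $\W$; by the standard theory of tensor products of a symmetric and an "antisymmetric-then-symmetric" fundamental-type module (or, more concretely, by the conservation law ${\bf a}+{\bf b}={\bf i}+{\bf j}$ together with $|{\bf a}|=|{\bf i}|$, $|{\bf b}|=|{\bf j}|$ from \eqref{iceA}), the space of vectors of a fixed weight that lie in the "top" cell is at most one-dimensional, and the number of admissible weights occurring is exactly $\min(l,m)+1$, matching the list $\xi_0,\dots,\xi_{\min(l,m)}$. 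So once the $\xi_s$ are known to be singular and visibly linearly independent (they have distinct weights, since the total number of boxes in component $n-1$ differs), they must be all of them.

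The cleanest way to organize the write-up is probably to prove the two recursion-type identities for $e_{n-1}$ and for $f_0$ acting on the $\xi_s$ (analogues of Lemmas \ref{le:srec1} and \ref{le:srec2}, which will be needed anyway for the spectral decomposition \eqref{spd3}), and deduce Proposition \ref{pr:sin3} as a corollary; but for the bare statement it suffices to exhibit the $\xi_s$, verify they are annihilated by $e_1,\dots,e_{n-1}$, and invoke the weight-count. The hard part, to reiterate, is simply carrying out the $q$-binomial telescoping in the $e_{n-1}$ computation cleanly enough that the stated exponents are confirmed rather than merely consistent.
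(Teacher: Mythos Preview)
The paper does not supply a proof for this proposition (nor for the analogous Propositions \ref{pr:sin1} and \ref{pr:sin2}); it states them and moves directly to the spectral decomposition. So there is no ``paper's proof'' to compare against, only the natural strategy you outline.

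Your forward direction is correct. The check for $e_i$ with $1\le i\le n-2$ is trivial since the first $n-2$ components of every summand vanish and $[0]=0$. For $e_{n-1}$, the coefficient of $|j,l-j\rangle\otimes|s-j-1,m-s+j+1\rangle$ in $\Delta(e_{n-1})\xi_s$ is $c_j[s-j]+c_{j+1}[j+1]q^{m-2s+2j+2}$, and the ratio $c_{j+1}/c_j=-q^{2s-m-2j-2}[s-j]/[j+1]$ coming from \eqref{xi4} makes this vanish identically. No Pascal identity is needed, just the $q$-binomial ratio ${s\brack j+1}/{s\brack j}=[s-j]/[j+1]$.

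Your converse argument has a gap. The invocation of \eqref{iceA} is a red herring: that is a constraint on the matrix elements of $S^{\mathrm{tr}}$, not a structural fact about singular vectors in $\W_l\otimes\W_m$. What is actually needed is: (i) for generic $q$ the weight (the tuple of $k_i$-eigenvalues) pins down $c_i:=a_i+b_i$ for all $i$; (ii) any singular weight must have $c_1=\cdots=c_{n-2}=0$; and (iii) for each remaining weight, parametrized by $s=c_{n-1}\in\{0,\ldots,\min(l,m)\}$, the condition $\Delta(e_{n-1})v=0$ is a nondegenerate first-order two-term recursion in $j$, hence fixes $v$ up to scalar. Point (iii) follows from the computation you already did. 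Point (ii) is the missing step and is not a one-line weight count: one must argue that the recursions $\Delta(e_i)v=0$ for $1\le i\le n-1$, which connect coefficients $c_{\bf a}$ and $c_{{\bf a}+{\bf e}_i-{\bf e}_{i+1}}$, become overdetermined at the boundary of the index range unless all $c_i$ with $i\le n-2$ vanish. A clean way to organize this is to first restrict to the genuine $U_q(A_{n-\kappa-1})$ generated by $e_{\kappa+1},\ldots,e_{n-1}$, use the known classification of singular vectors in tensor products of its symmetric tensor representations, and then impose the remaining conditions from $e_1,\ldots,e_\kappa$.
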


\subsubsection{Spectral decomposition}
\begin{lemma}\label{le:srec3}
For $0 \le s \le \min(l,m)$ the following relations hold:
\begin{align*}
&(\pi_x \otimes \pi_y)\Delta(
e_{n-2}\cdots e_0) \xi_s 
= A\xi_{s+1}+ B f^2_{n-1}\xi_{s-1}+ C f_{n-1}\xi_s,\\
&\quad A = -\frac{q^{-l-s}[l-s][l+m+1-s][m-s](x-q^{l+m-2s}y)}
{[l+m+1-2s][l+m-2s]},\\
&\quad  B = \frac{q^{-2-m+3s}[s](q^{l+m+2-2s}x-y)}
{[l+m+1-2s][l+m+2-2s]},\\
&\quad C= \frac{([l-s][l+m+2-s]-[m-s][s])x+(l\leftrightarrow m, x \leftrightarrow y)}
{[l+m-2s][l+m+2-2s]},\\
&(\pi_x \otimes \pi_y)\Delta(f_0\cdots f_{n-2}) \xi_s 
= (xy)^{-1}(q^{l+m}x-q^{2s-2}y)[s]\xi_{s-1}.
\end{align*}
\end{lemma}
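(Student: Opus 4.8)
The plan is to localize both identities to the last two tensor slots. Each vector occurring in the statement --- $\xi_s$, $f_{n-1}\xi_s$, $f_{n-1}^2\xi_{s-1}$, $\xi_{s\pm1}$ --- lies in the subspace $Z\subset\W\otimes\W$ spanned by the vectors $|{\bf m}\rangle\otimes|{\bf m}'\rangle$ with $m_i=m'_i=0$ for $1\le i\le n-2$; here we use $\kappa\le n-2$, so that slots $n-1,n$ of $\W$ are copies of $F$ and $(\pi_x\otimes\pi_y)\Delta(e_i)$ kills all of $Z$ for $1\le i\le n-2$. First I would expand $\Delta(e_{n-2}\cdots e_0)=\Delta(e_{n-2})\cdots\Delta(e_0)$ and $\Delta(f_0\cdots f_{n-2})=\Delta(f_0)\cdots\Delta(f_{n-2})$ using $\Delta(e_i)=1\otimes e_i+e_i\otimes k_i$ and $\Delta(f_i)=f_i\otimes1+k_i^{-1}\otimes f_i$. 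Acting on a vector of $Z$, the $e$'s (resp.\ the $f$'s) must carry a single box cyclically through the empty slots $1,\dots,n-2$, so the set of tensor factors chosen of the form $e_i\otimes k_i$ (resp.\ $k_i^{-1}\otimes f_i$) must be empty or all of $\{0,\dots,n-2\}$, else some operator meets an empty slot and the term vanishes. Evaluating the two surviving terms and identifying the leftover scalars with eigenvalues of $k_{n-1}$ gives
\[
(\pi_x\otimes\pi_y)\Delta(e_{n-2}\cdots e_0)\big|_Z=x\,(f_{n-1}\otimes k_{n-1}^{-1})+y\,(1\otimes f_{n-1}),\qquad
(\pi_x\otimes\pi_y)\Delta(f_0\cdots f_{n-2})\big|_Z=x^{-1}(e_{n-1}\otimes1)+y^{-1}(k_{n-1}\otimes e_{n-1}),
\]
where $e_{n-1},f_{n-1},k_{n-1}$ on the right mean $\pi_x(\cdot)$ on the first factor and $\pi_y(\cdot)$ on the second. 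On the piece of $Z$ with $\pi_x$-content $l$ and $\pi_y$-content $m$ these act as the generators of the spin-$\tfrac{l}{2}$ and spin-$\tfrac{m}{2}$ irreducibles of $U_q(sl_2)$, and (by Proposition~\ref{pr:sin3}, or by the recursion forced by $(\pi_x\otimes\pi_y)\Delta(e_{n-1})\xi_s=0$) $\xi_s$ is, up to a scalar, the $\Delta(e_{n-1})$-lowest vector of the spin-$\tfrac{l+m-2s}{2}$ summand; hence $\{f_{n-1}^k\xi_{s'}\}$ is a basis of $Z$ adapted to the Clebsch--Gordan decomposition.

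For the $f$-identity the structural part is now immediate. A one-line check, from $\Delta(e_{n-1})=1\otimes e_{n-1}+e_{n-1}\otimes k_{n-1}$ and $k_{n-1}e_{n-1}=q^2e_{n-1}k_{n-1}$, shows that $(\pi_x\otimes\pi_y)\Delta(e_{n-1})$ commutes with both $e_{n-1}\otimes1$ and $k_{n-1}\otimes e_{n-1}$, hence with $(\pi_x\otimes\pi_y)\Delta(f_0\cdots f_{n-2})|_Z$; together with $(\pi_x\otimes\pi_y)\Delta(e_{n-1})\xi_s=0$ and the fact that every vector of $Z$ is annihilated by $(\pi_x\otimes\pi_y)\Delta(e_i)$ for $1\le i\le n-2$, this makes $(\pi_x\otimes\pi_y)\Delta(f_0\cdots f_{n-2})\xi_s$ a singular vector, so by Proposition~\ref{pr:sin3} it is a scalar multiple of $\xi_{s-1}$. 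The scalar is found by comparing the coefficient of $|s-1,l-s+1\rangle\otimes|0,m\rangle$ on both sides via $(\ref{xi4})$; a single $q$-number identity puts it into the form $(xy)^{-1}(q^{l+m}x-q^{2s-2}y)[s]$.

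For the $e$-identity, $(\pi_x\otimes\pi_y)\Delta(e_{n-2}\cdots e_0)\xi_s$ lies in the content-$(s+1)$ weight subspace of $Z$, so it equals $\sum_{s'}\alpha_{s'}f_{n-1}^{s+1-s'}\xi_{s'}$. To get the selection rule $\alpha_{s'}=0$ for $s'\le s-2$ I would apply $(\pi_x\otimes\pi_y)\Delta(e_{n-1})^3$: a short manipulation using the quantum Serre relations at the two neighbors $n-2$ and $0$ of node $n-1$, the commutation of $e_{n-1}$ with $e_1,\dots,e_{n-3}$, and $(\pi_x\otimes\pi_y)\Delta(e_{n-1})\xi_s=0$ shows $(\pi_x\otimes\pi_y)\Delta(e_{n-1}^3\,e_{n-2}\cdots e_0)\xi_s=0$; on the other hand $(\pi_x\otimes\pi_y)\Delta(e_{n-1})^3f_{n-1}^{s+1-s'}\xi_{s'}$ vanishes unless $s'\le s-2$, and is then a nonzero multiple of $f_{n-1}^{s-2-s'}\xi_{s'}$, so linear independence of the $f_{n-1}^k\xi_{s'}$ forces those $\alpha_{s'}$ to vanish. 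This leaves $(\pi_x\otimes\pi_y)\Delta(e_{n-2}\cdots e_0)\xi_s=A\xi_{s+1}+Cf_{n-1}\xi_s+Bf_{n-1}^2\xi_{s-1}$, and $A,B,C$ are then read off by matching the coefficients of three convenient monomials --- say $|s+1,l-s-1\rangle\otimes|0,m\rangle$, $|0,l\rangle\otimes|s+1,m-s-1\rangle$, and one with both slots occupied --- in the explicit expansions of all four vectors obtained from $(\ref{xi4})$ and the displayed formulas.

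The reduction to $Z$ and the entire $f$-identity are short; the main obstacle is the final step of the $e$-identity, namely solving the resulting $3\times3$ linear system for $A,B,C$ and reducing the products of $q$-integers and $q$-binomials $(-1)^jq^{j(2s-m-j-1)+ms}{s\brack j}$ to the stated closed forms. A useful consistency check is that those forms degenerate correctly at the endpoints: $A=0$ when $s=\min(l,m)$ since then $[l-s][m-s]=0$, and $B=0$ when $s=0$ since then $[s]=0$, matching the absence of $\xi_{s+1}$ (resp.\ $\xi_{s-1}$) in those cases.
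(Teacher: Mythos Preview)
The paper gives no proof of this lemma beyond the blanket phrase ``a direct calculation shows'' that precedes the analogous Lemma~\ref{le:srec1}; the intended argument is simply to expand both sides of each identity via (\ref{xi4}) and compare coefficients term by term. Your approach is genuinely different and more structural: you first localize the problem to the last two tensor slots, showing that on the subspace $Z$ the long affine products $\Delta(e_{n-2}\cdots e_0)$ and $\Delta(f_0\cdots f_{n-2})$ collapse to two-term $sl_2$-type operators (this is correct --- once $e_0$ deposits an excitation in slot~$1$ of one factor, every subsequent $e_i$ is forced to act on that same factor or vanish). The $f$-identity then follows almost for free from the commutation of $\Delta(e_{n-1})$ with $e_{n-1}\otimes 1$ and $k_{n-1}\otimes e_{n-1}$ together with Proposition~\ref{pr:sin3}, and for the $e$-identity your Serre-relation argument gives the selection rule cleanly: rewriting $e_{n-1}^3 e_{n-2}$ and $e_{n-1}^2 e_0$, $e_{n-1}^3 e_0$ via the cubic Serre relations (which do hold on $\pi_x$ since $\epsilon_{n-1}=\epsilon_n=0$ forces $D_{n-1,n-2}=D_{n-1,0}=q^{-1}$ regardless of $\epsilon_1$) makes every surviving monomial end in a positive power of $e_{n-1}$, hence annihilate $\xi_s$. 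What your route buys is a conceptual explanation of \emph{why} only three terms appear on the right of the first identity and only one on the right of the second, so that the entire content of the lemma is reduced to determining four scalars; what the paper's brute-force route buys is that it avoids invoking Serre relations and $sl_2$ string arguments, at the cost of a longer and less illuminating computation. Either way the final extraction of $A,B,C$ from three matched coefficients is unavoidable bookkeeping, and you are right to flag it as the residual work.
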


Set $z=x/y$ and 
let $R(z) \in \mathrm{End}(\W_l \otimes \W_m)$ 
be the quantum $R$ matrix satisfying (\ref{eqrc})
normalized as (\ref{rnor2}).
Due to Proposition \ref{pr:sin3}  it has the spectral decomposition
\begin{align}\label{spdec3}
PR(z) = \sum_{s=0}^{\min(l,m)} \rho_s(z){\mathscr P}^{l,m}_s,
\end{align}
where the projector 
${\mathscr P}^{l,m}_s: \W_l \otimes \W_m \rightarrow \W_m \otimes \W_l$ is 
characterized by (\ref{pxi}) and 
(\ref{pcom}) for all 
$g \in \overline{\mathcal U}_A(\epsilon_1,\ldots, \epsilon_{n-2},0,0)$.
From Lemma \ref{le:srec3} one gets formally the same relation as (\ref{recr}).
The normalization condition (\ref{rnor2}) tells that
$PR(z) (|l{\bf e}_n\rangle \otimes |m{\bf e}_n\rangle)
=|m{\bf e}_n\rangle \otimes |l{\bf e}_n\rangle$.
Thus we have $\rho_0(z)=1$ and 
\begin{align}\label{spd3}
PR(z) = \sum_{s=0}^{\min(l,m)}
\left(\prod_{i=1}^s
\frac{1-q^{l+m-2i+2}z}{z-q^{l+m-2i+2}}\right)
{\mathscr P}^{l,m}_{s},
\end{align}
which is formally identical with (\ref{spd2}) except for the range of $s$.

\subsubsection{Irreducibility of 
${\mathcal W}_l \otimes {\mathcal W}_m$}\label{ss:akn-2}

Consider the direct sum decomposition
\begin{align*}
{\mathcal W}_l \otimes {\mathcal W}_m
&= \bigoplus_{0\le j\le t_l, 0 \le k \le t_m} Y_{j,k}
\quad (t_k=\min(\kappa,k)),\\
Y_{j,k} &= \bigoplus \,\C(q)|i_1,\ldots, i_n\rangle
\otimes  |i'_1,\ldots, i'_{n}\rangle,
\end{align*}
where the latter direct sum is over 
$i_1,\ldots, i_{\kappa}, i'_1, \ldots, i'_{\kappa} \in \{0,1\}$
and $i_{\kappa+1},\ldots, i_n, i'_{\kappa+1},\ldots, i'_n \in \Z_{\ge 0}$ 
such that $(i_1+\cdots+ i_{\kappa}, i'_1 + \cdots  + i'_{\kappa})=(j,k)$
and $(i_{\kappa+1}+\cdots + i_n, i'_{\kappa+1}+\cdots + i'_n)=(l-j,m-k)$. 
The subalgebra of ${\mathcal U}_A(1^\kappa,0^{n-\kappa})$
generated by $e_i,f_i,k^{\pm 1}_i$ with $1 \le i \le \kappa-1$ 
(resp. $\kappa+1\le i \le n-1$)  with the Serre relations
is isomorphic to $U_{-q^{-1}}(A_{\kappa-1})$ (resp. $U_q(A_{n-\kappa-1})$).
Let the same symbols denote their coproduct action.
Then $U_{-q^{-1}}(A_{\kappa-1})$ and $U_q(A_{n-\kappa-1})$ are commuting and we have
\begin{lemma}\label{le:lh2}
\begin{align*}
Y_{j,k} &= U_{-q^{-1}}(A_{\kappa-1})U_q(A_{n-\kappa-1})v_{j,k},\\
v_{j,k} &:=|\overbrace{1,\ldots, 1}^j,\overbrace{0,\ldots,0}^{n-j-1},l-j\rangle \otimes 
|\overbrace{0,\ldots,0}^{\kappa-k},
\overbrace{1,\ldots,1}^k,m-k,\overbrace{0,\ldots, 0}^{n-\kappa-1}\rangle.
\end{align*}
\end{lemma}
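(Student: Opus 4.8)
The plan is to identify $v_{j,k}$ as a tensor product of extremal weight vectors for the two commuting quantum group factors and then invoke Proposition \ref{pr:ht*lt} twice. First I would observe that the subalgebra $U_{-q^{-1}}(A_{\kappa-1})$ acts only on the first $\kappa$ tensor slots (which are copies of $V$), while $U_q(A_{n-\kappa-1})$ acts only on the last $n-\kappa$ slots (copies of $F$), so the two actions commute and $Y_{j,k}$ is stable under both. Writing $\W = V^{\otimes\kappa}\otimes F^{\otimes(n-\kappa)}$, one factors $Y_{j,k}$ as a tensor product: the $V^{\otimes\kappa}$-part for the first tensor factor of $\W\otimes\W$ is the $j$-th antisymmetric tensor representation of $U_{-q^{-1}}(A_{\kappa-1})$, and for the second factor it is the $k$-th antisymmetric tensor representation; similarly the $F^{\otimes(n-\kappa)}$-parts give the $(l-j)$-th and $(m-k)$-th symmetric tensor representations of $U_q(A_{n-\kappa-1})$ (this is exactly the homogeneous content recorded in Remark \ref{re:tanseki} applied to the sub-Dynkin-diagram). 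Thus $Y_{j,k}\cong (V^{j}_{A_{\kappa-1}}\otimes V^{k}_{A_{\kappa-1}})\otimes(V^{l-j}_{A_{n-\kappa-1}}\otimes V^{m-k}_{A_{n-\kappa-1}})$ as a module over $U_{-q^{-1}}(A_{\kappa-1})\otimes U_q(A_{n-\kappa-1})$.

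Next I would check that, under this factorization, $v_{j,k}$ is precisely $(\text{lowest wt})\otimes(\text{highest wt})$ for each of the two group factors. For $U_{-q^{-1}}(A_{\kappa-1})$: the string $|1,\ldots,1,0,\ldots,0\rangle$ with $j$ ones packed to the left is a lowest weight vector of the $j$-th antisymmetric tensor representation (it is annihilated by all $f_i$, $1\le i\le\kappa-1$, via \eqref{actsA}), while $|0,\ldots,0,1,\ldots,1\rangle$ with $k$ ones packed to the right is a highest weight vector of the $k$-th antisymmetric tensor representation (annihilated by all $e_i$, $1\le i\le\kappa-1$). For $U_q(A_{n-\kappa-1})$: the entry $l-j$ sitting in the leftmost of the $F$-slots (i.e.\ slot $\kappa+1$), with all other $F$-slots zero, is a highest weight vector of the $(l-j)$-symmetric tensor representation, and $m-k$ in the rightmost slot $n$ is a lowest weight vector of the $(m-k)$-symmetric tensor representation — here one must be mildly careful because in $v_{j,k}$ the entry $l-j$ is at position $n$ for the first tensor factor but $m-k$ at position $\kappa+1$ for the second, so I would verify the weight positions against \eqref{actsA} directly. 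Then Proposition \ref{pr:ht*lt}, applied first with $\geh=A_{\kappa-1}$ and $q\mapsto -q^{-1}$ and then with $\geh=A_{n-\kappa-1}$, shows that $v_{j,k}$ generates the whole tensor product under each factor, hence $Y_{j,k}=U_{-q^{-1}}(A_{\kappa-1})U_q(A_{n-\kappa-1})v_{j,k}$.

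The main obstacle I anticipate is purely bookkeeping: matching the positions of the non-$\{0,1\}$ entries $l-j$ and $m-k$ inside $v_{j,k}$ with the correct extremal-weight condition, since one sits in the last slot and the other in the first of the $F$-block, so ``highest'' versus ``lowest'' must be tracked carefully for the $U_q(A_{n-\kappa-1})$ factor (and the roles are swapped relative to the $U_{-q^{-1}}(A_{\kappa-1})$ factor). Apart from that, the argument is a direct transcription of the $\kappa=n-1$ case (Lemma \ref{le:lh}) with one commuting group factor added, so no new conceptual input is needed beyond Proposition \ref{pr:ht*lt} and the explicit action \eqref{actsA}.
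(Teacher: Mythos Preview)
Your proposal is correct and follows essentially the same route as the paper's proof: identify $v_{j,k}$ as $(\text{lowest wt.})\otimes(\text{highest wt.})$ for the $U_{-q^{-1}}(A_{\kappa-1})$ factor and $(\text{highest wt.})\otimes(\text{lowest wt.})$ for the $U_q(A_{n-\kappa-1})$ factor, then invoke Proposition~\ref{pr:ht*lt} for each. The bookkeeping confusion you flag midway (you first misplace $l-j$ and $m-k$, then correct yourself) resolves exactly as you indicate---$l-j$ sits in slot $n$ (highest for the symmetric piece) and $m-k$ in slot $\kappa+1$ (lowest)---so once that is cleaned up the argument matches the paper verbatim.
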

\begin{proof}
As an element of a $U_{-q^{-1}}(A_{\kappa-1})$-module,
$v_{j,k}$ is the $(\text{lowest wt. vec.})\otimes (\text{highest wt. vec.})$ in the 
tensor product of the
antisymmetric tensor representations of order $j$ and $k$.
As an element of a $U_q(A_{n-\kappa-1})$-module,
$v_{j,k}$ is the $(\text{highest wt. vec.})\otimes (\text{lowest wt. vec.})$ in the 
tensor product of the
symmetric tensor representations of order $l-j$ and $m-k$.
Thus the assertion follows from Proposition \ref{pr:ht*lt}.
\end{proof}

\begin{proposition}\label{pr:irrAn-2}
The ${\mathcal U}_A(1^\kappa, 0^{n-\kappa})$-module 
${\mathcal W}_l \otimes {\mathcal W}_m$ is irreducible.
\end{proposition}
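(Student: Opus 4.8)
The plan is to mimic the structure of the proof of Proposition~\ref{pr:irrAn-1}, using Lemma~\ref{le:lh2} to reduce the irreducibility of ${\mathcal W}_l \otimes {\mathcal W}_m$ to the statement that every $v_{j,k}$ is generated from an arbitrary nonzero submodule $W$. First I would fix a nonzero submodule $W$ and use Proposition~\ref{pr:sin3} together with Lemma~\ref{le:srec3}: since the weight vectors killed by $\Delta(e_i)$ for $1\le i\le n-1$ are precisely the $\xi_s$, and since the ``ladder'' operators $\Delta(e_{n-2}\cdots e_0)$ and $\Delta(f_0\cdots f_{n-2})$ move between consecutive $\xi_s$'s (with coefficients that are nonzero rational functions of $x,y$ for generic $x,y$), one can produce $\xi_0 = |l{\bf e}_n\rangle\otimes|m{\bf e}_n\rangle = v_{0,0}$ inside $W$. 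Here one must be slightly careful because Lemma~\ref{le:srec3} gives a three-term relation involving $\xi_{s+1}$, $f_{n-1}^2\xi_{s-1}$ and $f_{n-1}\xi_s$; but applying $\Delta(e_{n-1})$ twice annihilates the last two terms (as $e_{n-1}f_{n-1}$-type commutators degenerate on the relevant weight spaces), so one still recovers a nonzero multiple of $\xi_{s+1}$, hence by downward induction $\xi_0 \in W$.

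Next I would run an induction on $j+k$ to show $v_{j,k}\in U_{-q^{-1}}(A_{\kappa-1})U_q(A_{n-\kappa-1})W$ for all admissible $j,k$, exactly paralleling Steps~1 and~2 in the proof of Proposition~\ref{pr:irrAn-1} but with the bookkeeping adapted to $\W = V^{\otimes\kappa}\otimes F^{\otimes(n-\kappa)}$. The base case $j+k=0$ is $v_{0,0}=\xi_0$, already done. For the inductive step, assuming $X_{s-1}:=\bigoplus_{j+k=s-1}Y_{j,k}$ has been generated, I would first generate $Y_{s,0}$ (the ``all the $V$-slots on the left filled'' sector) by writing down a small system of linear relations obtained by applying suitable $f_i$'s and $e_0$ to vectors of the form $|{\bf e}_{[1,j]}+{\bf e}_{[j+2,s]}+\bullet\rangle\otimes|\cdots\rangle$, observing that the left-hand sides lie in $X_{s-1}$ and the coefficient matrix is invertible for generic $x,y$; then I would move ``mass'' from the left tensor factor to the right via relations of the shape $f_{j+k}\cdots f_{n-1}(|{\bf e}_{[1,j-1]}+\bullet\rangle\otimes|{\bf e}_{[\kappa-k+1,\kappa]}+\bullet\rangle)$, each of whose left-hand sides lies in $X_{s-1}$ while one summand on the right lies in the already-constructed $Y_{j,k}$, leaving the desired $Y_{j-1,k+1}$-vector. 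Applying the commuting $U_q(A_{n-\kappa-1})$-action then spreads this across the $F$-slots. One subtlety distinguishing this case from $\kappa=n-1$ is that the $F$-slots are infinite-dimensional, so for $l$ or $m$ large the induction on $j+k$ runs over $0\le j\le\min(\kappa,l)$, $0\le k\le\min(\kappa,m)$ only — but since $j,k$ are now bounded by $\kappa\le n-2$ this is harmless, and the ``$s\ge n$'' branch of the earlier proof is not needed.

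Finally, once all $v_{j,k}\in W$, Lemma~\ref{le:lh2} gives $Y_{j,k}=U_{-q^{-1}}(A_{\kappa-1})U_q(A_{n-\kappa-1})v_{j,k}\subseteq W$ for every $j,k$, whence $W=\bigoplus_{j,k}Y_{j,k}={\mathcal W}_l\otimes{\mathcal W}_m$, proving irreducibility. I expect the main obstacle to be the explicit verification that the linear systems appearing in ``Step~1'' and the transfer relations in ``Step~2'' have nondegenerate coefficients for generic $x,y$ — this requires computing the action of concatenated $e_i$'s and $f_i$'s on the relevant mixed $V$/$F$ basis vectors using \eqref{actsA}, and keeping track of the $q$-integer factors $[m_i]$ coming from the $F$-slots (which can vanish at special $q$, but $q$ is generic). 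A secondary technical point is handling the three-term recursion of Lemma~\ref{le:srec3} cleanly when extracting $\xi_{s\pm1}$; this is best dispatched by noting that on each fixed weight space the operators in question act on a space of dimension at most a few, so the needed linear independence can be checked on small explicit matrices, just as the $n=2$ base cases were handled in Section~\ref{ss:kan-1}.
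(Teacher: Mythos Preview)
There is a genuine gap at the very first step of your induction: you assert that the base case $j+k=0$ is handled because $v_{0,0}=\xi_0$, but this is false for $\kappa\le n-2$. From \eqref{xi4} one has $\xi_0=|l{\bf e}_n\rangle\otimes|m{\bf e}_n\rangle$, whereas Lemma~\ref{le:lh2} gives $v_{0,0}=|l{\bf e}_n\rangle\otimes|m{\bf e}_{\kappa+1}\rangle$; these agree only when $\kappa+1=n$. The distinction matters because $\xi_0$ is a \emph{highest}\,$\otimes$\,\emph{highest} weight vector for $U_q(A_{n-\kappa-1})$ (both $F$-parts are $|0,\ldots,0,\ast\rangle$), so Proposition~\ref{pr:ht*lt} does not apply and $U_q(A_{n-\kappa-1})\xi_0$ is only one irreducible summand of $Y_{0,0}$, not all of it. In particular $v_{0,0}\notin U_{-q^{-1}}(A_{\kappa-1})U_q(A_{n-\kappa-1})\xi_0$, so your induction never gets started. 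Your handling of the three-term recursion in Lemma~\ref{le:srec3} is also shaky: applying $e_{n-1}^2$ to it does not isolate $\xi_{s+1}$, since $e_{n-1}^2f_{n-1}^2\xi_{s-1}$ is a nonzero multiple of $\xi_{s-1}$ rather than zero.

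The paper's proof avoids both problems by a much shorter route. First it generates \emph{all} the $\xi_s$ (go down to $\xi_0$ with the clean one-term relation $\Delta(f_0\cdots f_{n-2})\xi_s=c\,\xi_{s-1}$, then climb back up using the three-term relation, at each step subtracting off the already-generated $f_{n-1}^2\xi_{s-1}$ and $f_{n-1}\xi_s$). Since the $\xi_s$ exhaust the $U_q(A_{n-\kappa-1})$-highest weight vectors in $Y_{0,0}$, acting with $U_q(A_{n-\kappa-1})$ produces all of $Y_{0,0}$. That space contains the ``separated'' vector $|l{\bf e}_n\rangle\otimes|m{\bf e}_{\kappa+1}\rangle$, on which the coproduct action of the strings $E_i=x^{-1}e_ie_{i-1}\cdots e_0$ and $F_i=f_if_{i+1}\cdots f_\kappa$ acts on one tensor factor only (the other factor being annihilated), yielding each $v_{j,k}$ directly by a single explicit formula. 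No induction on $j+k$ and no linear-system argument are needed.
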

\begin{proof}
Let $W$ be a nonzero submodule of ${\mathcal W}_l \otimes {\mathcal W}_m$.
Due to Lemma \ref{le:lh2} it suffices to show that 
all the $v_{j,k}$ are generated from a vector in $W$.
By Proposition \ref{pr:sin3} and Lemma \ref{le:srec3},
we can generate all the $\xi_s$ (\ref{xi4}).
By applying $U_q(A_{n-\kappa-1})$ to them further we can generate
$Y_{0,0}$. 
It contains the vector 
$|l{\bf e}_n\rangle \otimes |m{\bf e}_{\kappa+1}\rangle$.
Then $v_{j,k}$ is generated as 
$v_{j,k} = c. F_{\kappa}F_{\kappa-1}\cdots F_{\kappa-k+1}
E_0E_1\cdots E_{j-1}(|l{\bf e}_n\rangle \otimes |m{\bf e}_{\kappa+1}\rangle)$,
where
$E_i=x^{-1}e_ie_{i-1}\cdots e_0$ and 
$F_i = f_i f_{i+1}\cdots f_\kappa$.
\end{proof}

\section{Proof Part III: 
Irreducibility of ${\mathcal W} \otimes {\mathcal W}$ for ${\mathcal U}_B$}
\label{sec:IrreD}

Consider ${\mathcal U}_B = 
{\mathcal U}_B(1^{\kappa},0^{\kappa'})\,
(\kappa'=n-\kappa)$. In this section we show the 
irreducibility of the ${\mathcal U}_B$-module $\W\otimes \W$ and 
present the spectral decomposition of the associated quantum $R$ matrix.
We assume 
$\kappa,\kappa'\ge1$, since the $\kappa'=0$ case was treated in \cite{KS} and the 
$\kappa=0$ case in \cite{KO3,KOproc}. We follow the convention for the vector 
in $\W\otimes\W$ in the beginning of Section \ref{sec:IrreA}.
For a subset $J$ of $\{0,1,\ldots,n\}$ define the subalgebra ${\mathcal U}_{B,J}$
by the one generated by $e_i,f_i,k^{\pm 1}_i$ for $i\in J$.

\subsection{Singular vectors and spectral decomposition}

Although our algebra ${\mathcal U}_B$ and module $\W$ are different from
$U_q(D_{n+1}^{(2)})$ and $F^{\otimes n}$ treated in \cite{KO3}, 
the action of generators in Proposition \ref{pr:relD} is quite similar to 
\cite[Prop.~1]{KO3},
and consequently, the following propositions remain to be valid. 

\begin{proposition}
The weight vectors in $\W \otimes \W$ annihilated by 
$(\pi_x \otimes \pi_y)\Delta(e_i)$ for $1 \le i \le n$ are given by
\[
\xi_l=\sum_{m=0}^l (-p)^{-m} q^{m(l-(m+1)/2)}{l\brack m}
|m{\bf e}_n\rangle\ot|(l-m){\bf e}_n\rangle
\]
for some $l\in\Z_{\ge0}$ up to an overall scalar.
\end{proposition}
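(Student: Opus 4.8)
The plan is to imitate the proof of \cite[Prop.~1]{KO3} for the $q$-oscillator case $\kappa=0$, the only genuinely new features being the signs carried by $q_i=-q^{-1}$ on the $V$-block and the occupation bound $m_i\le 1$ there. First I would exploit the $\Z_{\ge0}$-grading of $\W\ot\W$ by total occupation $\mathrm{gr}(|{\bf a}\rangle\ot|{\bf b}\rangle)=|{\bf a}|+|{\bf b}|$. From \eqref{actsD} the operators $(\pi_x\ot\pi_y)\Delta(e_i)$ with $1\le i\le n-1$ preserve $\mathrm{gr}$, while $(\pi_x\ot\pi_y)\Delta(e_n)$ lowers it by one, and each $\Delta(k_i)$ is diagonal on the monomial basis. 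Hence a vector annihilated by all $\Delta(e_i)$, $1\le i\le n$, decomposes first into weight vectors and then into $\mathrm{gr}$-homogeneous pieces, each still a weight vector annihilated by all the $\Delta(e_i)$; so it suffices to classify such $v$ that are $\mathrm{gr}$-homogeneous of a fixed degree $l\in\Z_{\ge0}$.

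Next comes a leading-term reduction. A monomial $|{\bf m}\rangle\in\W$ is killed by $e_i$, $1\le i\le n-1$, precisely when $m_1=\cdots=m_{n-1}=0$, i.e.\ $|{\bf m}\rangle=|m{\bf e}_n\rangle$, since $e_i|{\bf m}\rangle=[m_i]|{\bf m}-{\bf e}_i+{\bf e}_{i+1}\rangle$ with $[m_i]\neq0$ for $m_i\ge1$ and the shift $|{\bf m}\rangle\mapsto|{\bf m}-{\bf e}_i+{\bf e}_{i+1}\rangle$ is injective where it is nonzero. For $v\in\W\ot\W$, let $j\le n-2$ be the largest site at which some monomial of $v$ has nonzero occupation; then in $\Delta(e_j)v$ the contributions of $1\ot e_j$ and of $e_j\ot k_j$ land on disjoint monomials (the box created at site $j{+}1$ sits in the right, resp.\ left, tensor factor, and every monomial of $v$ is empty at sites $j{+}1,\dots,n-1$), and within each family the relevant shift is injective; hence $\Delta(e_j)v=0$ forces every coefficient with $a_j\neq0$ or $b_j\neq0$ to vanish, a contradiction. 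Therefore $v$ is supported on $W'\ot W'$ with $W'=\bigoplus_{j,k\ge0}\C|j{\bf e}_{n-1}+k{\bf e}_n\rangle$. Comparing the $\Delta(k_n)$-eigenvalue $p^2q^{-(k+k')}=p^2q^{-(l-j-j')}$ across the monomials of $v$, and using that $q$ is generic, shows that $j+j'$ takes a single value $\sigma$; it then remains to see that $\sigma=0$. Ruling out $\sigma\ge1$ by means of the combined conditions $\Delta(e_{n-1})v=\Delta(e_n)v=0$ is the one place where the images of $1\ot e_{n-1}$ and $e_{n-1}\ot k_{n-1}$ overlap, so it cannot be settled by disjointness alone; I would carry it out following the pattern of \cite[Prop.~1]{KO3}, the sole extra case being $n=2$ with $\W=V\ot F$ (i.e.\ $\kappa=n-1=1$), which is a direct finite computation. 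I expect this $\sigma=0$ reduction at the last two sites to be the main obstacle; everything else is bookkeeping.

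Granting this, every simultaneous singular vector of degree $l$ lies in the $(l{+}1)$-dimensional span of the monomials $|m{\bf e}_n\rangle\ot|(l-m){\bf e}_n\rangle$, $0\le m\le l$, each of which is visibly killed by $e_1,\dots,e_{n-1}$. Writing $v=\sum_{m=0}^{l}c_m\,|m{\bf e}_n\rangle\ot|(l-m){\bf e}_n\rangle$, the remaining condition $\Delta(e_n)v=0$ becomes, via $e_n|m{\bf e}_n\rangle=[m]|(m-1){\bf e}_n\rangle$ and $k_n|(l-m){\bf e}_n\rangle=p\,q^{-(l-m)}|(l-m){\bf e}_n\rangle$, the two-term recursion
\[
q^{-(l-m-1)}[m+1]\,p\,c_{m+1}=-[l-m]\,c_m\qquad(0\le m\le l-1),
\]
whose solution is unique up to an overall scalar and is read off to be $c_m=(-p)^{-m}q^{m(l-(m+1)/2)}{l\brack m}$; that is, $v=\xi_l$. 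Conversely the same recursion shows $\Delta(e_n)\xi_l=0$, while each monomial of $\xi_l$ is annihilated by $e_1,\dots,e_{n-1}$, so $\xi_l$ is indeed a simultaneous singular vector; and since $l\in\Z_{\ge0}$ is arbitrary (position $n$ lies in the $F$-block because $\kappa'\ge1$), this exhausts them.
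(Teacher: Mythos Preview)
Your overall strategy matches the paper's (which simply defers to \cite[Prop.~1]{KO3}), and your final recursion for the coefficients $c_m$ is correct. However, your leading-term reduction has a gap: you choose $j\le n-2$ to be the largest site with nonzero occupation and then assert that ``every monomial of $v$ is empty at sites $j{+}1,\dots,n-1$'', but maximality of $j$ within $\{1,\dots,n-2\}$ only guarantees emptiness at sites $j{+}1,\dots,n-2$, not at site $n-1$. If $j=n-2$ and $c_{n-1}:=a_{n-1}+b_{n-1}\ge1$, your disjointness claim fails, since a target monomial with $(a_{n-1},b_{n-1})=(1,1)$ can arise both from a source with $(1,0)$ via $1\otimes e_{n-2}$ and from one with $(0,1)$ via $e_{n-2}\otimes k_{n-2}$.

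The fix is to reverse the order of your two reductions. The operators $e_{n-1},e_n,k_{n-1},k_n$ act only on the last two components, so writing $v=\sum_\nu \nu\boxtimes v_\nu$ over monomials $\nu$ on sites $1,\dots,n-2$, the conditions $\Delta(e_{n-1})v=\Delta(e_n)v=0$ decouple into the same conditions on each $v_\nu$. Your two-site argument (the \cite{KO3} computation when sites $n-1,n$ are both $F$, or the $V\otimes F$ case when $\kappa=n-1$, which you correctly flag as needing a direct check) then forces $c_{n-1}=0$ without any hypothesis on $c_1,\dots,c_{n-2}$. With $c_{n-1}=0$ in hand, your disjointness argument runs cleanly by reverse induction from $j=n-2$ down to $j=1$, and the remainder proceeds as you wrote.
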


\begin{proposition} \label{pr:recD}
We have 
\begin{align*}
&(\pi_x\ot\pi_y)\Delta(e_{n-1}\cdots e_1e_0)\xi_l
=\frac1{1-q^{2l+1}}\{(q^{l+1}x+y)\xi_{l+1}+q^l(x+q^ly)f_n^2\xi_{l-1}\}\quad(l\ge1),\\
&(\pi_x\ot\pi_y)\Delta(e_{n-1}\cdots e_1e_0)\xi_0
=\frac1{1-q}\{(qx+y)\xi_1-\mathrm{i}q^{1/2}(x+y)f_n\xi_0\},\\
&(\pi_x\ot\pi_y)\Delta(f_0f_1\cdots f_{n-1})\xi_l
={\mathrm i}[l]q^{-1/2}(q^lx^{-1}+y^{-1})\xi_{l-1}\quad(l\ge1).
\end{align*}
\end{proposition}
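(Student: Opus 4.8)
The plan is to reduce both assertions to a short direct computation in the tensor product representation $\pi_x\ot\pi_y$ of Proposition~\ref{pr:relD}, exploiting that each $\xi_l$ is supported on the ``diagonal'' monomials $|m{\bf e}_n\rangle\ot|(l-m){\bf e}_n\rangle$, i.e.\ vectors in which only the $n$th tensor component is occupied. Abbreviate $E=e_{n-1}\cdots e_1e_0$ and $F=f_0f_1\cdots f_{n-1}$, and expand $\Delta(E)=\Delta(e_{n-1})\cdots\Delta(e_0)$ using $\Delta(e_i)=1\ot e_i+e_i\ot k_i$. First I would establish that, on a diagonal monomial, only two branches of this expansion survive. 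In a term let $S\subseteq\{0,\dots,n-1\}$ record the indices for which the branch $e_i\ot k_i$ is chosen. For the action on the left factor to be nonzero, the operators $e_i$ ($i\in S$), applied in increasing order of $i$, must create the single box ($e_0$) and then transport it ($e_1,e_2,\dots$); hence $S$ is an initial segment $\{0,1,\dots,j-1\}$, and likewise the complementary set acting on the right factor must be an initial segment. Two disjoint initial segments of $\{0,\dots,n-1\}$ cannot both be nonempty, so only $S=\emptyset$ (the branch $1\ot E$) and $S=\{0,\dots,n-1\}$ (the branch $E\ot(k_{n-1}\cdots k_0)$) contribute. On $|m{\bf e}_n\rangle\ot|N{\bf e}_n\rangle$ these return, respectively, $y\,|m{\bf e}_n\rangle\ot|(N+1){\bf e}_n\rangle$ and $xp^{-1}q^{N}\,|(m+1){\bf e}_n\rangle\ot|N{\bf e}_n\rangle$, using $E|m{\bf e}_n\rangle=x|(m+1){\bf e}_n\rangle$ and that $k_0\cdots k_{n-1}$ scales $|N{\bf e}_n\rangle$ by $p^{-1}q^{N}$ (here $q_n=q$ since $\epsilon_n=0$). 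The identical analysis for $\Delta(F)$, with $\Delta(f_i)=f_i\ot1+k_i^{-1}\ot f_i$, leaves the branches $F\ot1$ and $(k_0^{-1}\cdots k_{n-1}^{-1})\ot F$, with coefficients $[m]x^{-1}$ and $pq^{-m}[l-m]y^{-1}$.

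Feeding in the closed form of $\xi_l$, this presents $(\pi_x\ot\pi_y)\Delta(E)\xi_l$ and $(\pi_x\ot\pi_y)\Delta(F)\xi_l$ as explicit single sums over the monomials $|m{\bf e}_n\rangle\ot|(L-m){\bf e}_n\rangle$ of the appropriate total degree $L$. I would then expand the right-hand sides on the same basis: since $\Delta(f_n)=f_n\ot1+k_n^{-1}\ot f_n$ preserves this diagonal span, $\xi_{l+1}$, $f_n^2\xi_{l-1}$ and $f_n\xi_0$ are obtained by iterating $\Delta(f_n)$ together with the formula for $\xi_{l\pm1}$. Since every expression in play is linear in $x$ and $y$ with $(x,y)$-independent coefficients, I would equate the $x$-parts and the $y$-parts of the coefficient of each monomial separately; each of the three asserted equalities then collapses to a family of elementary scalar identities in $q$, the $q$-binomials ${l\brack m}$, and the branch $p=\mathrm{i}q^{-1/2}$ — for instance the $y$-part of the first relation becomes $c_m=(1-q^{2l+1})^{-1}\bigl(c^{(l+1)}_m+q^{2l}d_m\bigr)$, where $c_m$, $c^{(l+1)}_m$, $d_m$ denote the coefficients of $\xi_l$, $\xi_{l+1}$, $f_n^2\xi_{l-1}$. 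These identities are of exactly the type arising in the proof of \cite[Prop.~1]{KO3}, to which, as noted before the statement, the whole situation is parallel.

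The only genuine obstacle is this final bookkeeping: tracking the signs, the integer and half-integer powers of $q$ produced by $p=\mathrm{i}q^{-1/2}$, and the index shifts in the $q$-binomials, together with handling the degenerate small cases. In particular, for $l=0$ there is no ``$f_n^2\xi_{l-1}$'' term and the two surviving coproduct branches must instead be re-expressed through $\xi_1$ and $f_n\xi_0$, whereas no such degeneration occurs for the $f$-relation with $l\ge1$. I would therefore present the argument by exhibiting the two surviving branches and the resulting closed sums, recording the scalar identities to be verified, and leaving their routine check to the reader, in analogy with \cite{KO3}.
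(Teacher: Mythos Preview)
Your proposal is correct and follows the same direct-computation route that the paper invokes (the paper gives no self-contained proof, merely noting that the action in Proposition~\ref{pr:relD} parallels \cite[Prop.~1]{KO3} so that the computation there carries over verbatim). Your reduction to the two surviving branches $S=\emptyset$ and $S=\{0,\dots,n-1\}$ is exactly the mechanism behind that calculation, and the resulting scalar identities are the same $q$-binomial checks one finds in \cite{KO3}.
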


\begin{proposition}\label{spd4}
For ${\mathcal U}_B$, $PR(z)$ has the following spectral decomposition.
\[
PR(z)=\sum_{l=0}^\infty\prod_{j=1}^l\frac{z+q^j}{1+q^jz}
{\mathscr P}_l,
\]
where ${\mathscr P}_l$ 
is the projector on the space generated from $\xi_l$ over
${\mathcal U}_{B,\{1,\ldots,n\}}$.
\end{proposition}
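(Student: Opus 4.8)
The plan is to follow the route used for the homogeneous case in \cite{KO3} and for the ${\mathcal U}_A$ cases in Sections \ref{ss:kan-1}--\ref{ss:kan-3}: first show that, under the finite-type subalgebra ${\mathcal U}_{B,\{1,\ldots,n\}}$, the operator $PR(z)$ is diagonal with the ${\mathscr P}_l$ as its spectral projectors; then determine the eigenvalues by one scalar recursion extracted from the $e_0$-intertwining; and finally fix the overall constant from the normalization (\ref{rnor3}).

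First I would use that $R(z)$ solves (\ref{eqrc}) to conclude, exactly as recalled in Section \ref{ss:kan-1}, that $PR(z)$ (with $P$ the flip of the two factors) intertwines $\pi_x\otimes\pi_y$ and $\pi_y\otimes\pi_x$. By (\ref{actsD}) the representation $\pi_x|_{{\mathcal U}_{B,\{1,\ldots,n\}}}$ is independent of $x$, so these two actions agree on ${\mathcal U}_{B,\{1,\ldots,n\}}$ and $PR(z)$ is a ${\mathcal U}_{B,\{1,\ldots,n\}}$-endomorphism of $\W\otimes\W$. Combining the classification of singular vectors stated above with the irreducibility of each cyclic module $V_l:={\mathcal U}_{B,\{1,\ldots,n\}}\xi_l$ (established by the same highest/lowest-weight arguments as in Section \ref{sec:IrreA} and in \cite{KO3}), the restriction of $\W\otimes\W$ to ${\mathcal U}_{B,\{1,\ldots,n\}}$ is the multiplicity-free direct sum $\bigoplus_{l\ge0}V_l$. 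Schur's lemma then forces $PR(z)=\sum_{l\ge0}\rho_l(z){\mathscr P}_l$ with ${\mathscr P}_l$ the projector onto $V_l$ and scalars $\rho_l(z)$; on any fixed vector only finitely many terms contribute, so the sum is meaningful.

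Next I would compute $\rho_l(z)$ from Proposition \ref{pr:recD}. Applying the intertwining identity $(\pi_y\otimes\pi_x)\Delta(g)\,PR(z)=PR(z)\,(\pi_x\otimes\pi_y)\Delta(g)$ with $g=e_{n-1}\cdots e_1e_0$ to $\xi_l$, the right-hand side becomes $PR(z)$ evaluated on the linear combination of $\xi_{l+1}$ and $f_n^2\xi_{l-1}$ (and, for $l=0$, $f_n\xi_0$) produced by Proposition \ref{pr:recD}; since $\xi_{l\pm1}\in V_{l\pm1}$ and $f_n\xi_l\in V_l$, $f_n^2\xi_{l-1}\in V_{l-1}$ with $f_n\in{\mathcal U}_{B,\{1,\ldots,n\}}$, this equals the same combination with $\xi_{l+1},f_n^2\xi_{l-1},f_n\xi_l$ weighted by $\rho_{l+1}(z),\rho_{l-1}(z),\rho_l(z)$ respectively. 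The left-hand side is $\rho_l(z)$ times the vector of Proposition \ref{pr:recD} with $x$ and $y$ interchanged (because the action there is $\pi_y\otimes\pi_x$). The components $\xi_{l+1}$ and $f_n^2\xi_{l-1}$ lie in distinct $V_l$ hence are linearly independent, so matching the coefficient of $\xi_{l+1}$ gives
\begin{align*}
\rho_l(z)\,(x+q^{l+1}y)=(q^{l+1}x+y)\,\rho_{l+1}(z),\qquad\text{hence}\qquad
\frac{\rho_{l+1}(z)}{\rho_l(z)}=\frac{z+q^{l+1}}{1+q^{l+1}z}\quad(z=x/y),
\end{align*}
and the coefficient of $f_n^2\xi_{l-1}$ (resp.\ of $f_n\xi_0$ when $l=0$) reproduces the same relation, serving as a consistency check. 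Since $\xi_0=|{\bf 0}\rangle\otimes|{\bf 0}\rangle$ is fixed by $P$ and by the factors $K^{\pm1}$ in (\ref{gtr}), one has $\tilde R(z)(|{\bf 0}\rangle\otimes|{\bf 0}\rangle)=\rho_0(z)\,|{\bf 0}\rangle\otimes|{\bf 0}\rangle$, so (\ref{rnor3}) gives $\rho_0(z)=1$ and therefore $\rho_l(z)=\prod_{j=1}^{l}\frac{z+q^{j}}{1+q^{j}z}$, which is the asserted formula.

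The main obstacle is the input needed before the scalar computation: that the $\xi_l$ listed above are \emph{all} the singular vectors, that each generates an irreducible ${\mathcal U}_{B,\{1,\ldots,n\}}$-module, and that $\W\otimes\W$ restricted to ${\mathcal U}_{B,\{1,\ldots,n\}}$ is the multiplicity-free sum of these modules (which is precisely what makes the projectors ${\mathscr P}_l$ in the statement well defined). I expect this to go through by the same weight bookkeeping and generation arguments as in the ${\mathcal U}_A$ analysis of Section \ref{sec:IrreA} and in \cite{KO3} — and it is closely tied to the irreducibility of $\W\otimes\W$ as a ${\mathcal U}_B$-module, to be proved in Proposition \ref{pr:irrD} — but this representation-theoretic step carries the real content; once it is in place, the two displayed computations above are essentially the whole proof.
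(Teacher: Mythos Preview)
Your proposal is correct and follows exactly the approach the paper intends: the paper does not spell out a proof of this proposition but simply states that it ``remains to be valid'' by the same argument as in \cite{KO3} and the ${\mathcal U}_A$ analysis, and what you have written is precisely that argument --- commutation of $PR(z)$ with ${\mathcal U}_{B,\{1,\ldots,n\}}$, the singular-vector classification, the eigenvalue recursion from Proposition \ref{pr:recD} (either the $e$-relation you use or the $f$-relation works, as in the ${\mathcal U}_A$ cases), and the normalization (\ref{rnor3}) fixing $\rho_0(z)=1$. One small sharpening: you do not actually need irreducibility of each $V_l$ to get $PR(z)|_{V_l}=\rho_l(z)\,\mathrm{id}$; since $PR(z)$ commutes with $e_i,k_i$ for $i\ge 1$ it sends $\xi_l$ to a singular vector of the same weight, hence to a scalar multiple of itself by the uniqueness in the singular-vector proposition, and then cyclicity of $V_l$ propagates that scalar --- so the only genuine representation-theoretic input is the direct-sum decomposition $\W\otimes\W=\bigoplus_l V_l$, which is what makes the projectors ${\mathscr P}_l$ well defined.
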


\subsection{Irreducibility of $\W \otimes \W$}

We prove the irreducibility of the ${\mathcal U}_B$-module $\W\ot\W$. 
Set $K=\{1,\ldots,\kappa\},K'=\{\kappa+1,\ldots,n\}$. The subalgebra
${\mathcal U}_{B,K-1}\,(K-1:=\{0,\ldots,\kappa-1\})$ 
(resp. ${\mathcal U}_{B,K'}$) is isomorphic to $U_{-q^{-1}}(B_\kappa)$
(resp. $U_q(B_{\kappa'})$). 
We use the same symbol $|{\bf i}_t\rangle
=|i\rangle^{\ot t}\in V^{\ot t}(t\le\kappa),V^{\ot\kappa}\ot F^{\ot(t-\kappa)}
(t>\kappa)$ as in Section \ref{sec:IrreA}. 
For instance one can write
\[
\xi_l=(|{\bf 0}_{n-1}\rangle\ot|{\bf 0}_{n-1}\rangle)\boxtimes\overline{\xi}_l,\quad
\overline{\xi}_l=\sum_{m=0}^l(-p)^{-m}q^{m(l-(m+1)/2)}{l\brack m}|m\rangle\ot|l-m\rangle.
\]
We also use a notation 
$\xi_l^{(t)}=(|{\bf 0}_{t-1}\rangle\ot|{\bf 0}_{t-1}\rangle)\boxtimes\overline{\xi}_l$.

\begin{lemma} \label{lem:1}
Let $a\le\kappa,{\bf a}=\sum_{i=1}^{a-1}\alpha_i{\bf e}_i,
{\bf a}'=\sum_{i=1}^{a-1}\alpha'_i{\bf e}_i$. 
For ${\bf a}$, set $\W_{{\bf a},K'}=
\langle|{\bf a}+\sum_{i=\kappa+1}^n\gamma_i{\bf e}_i\rangle
\mid\gamma_i\in\Z_{\ge0}\rangle$, where $\langle Y \rangle$ 
means the linear span of the set $Y$ of the vectors.
Then we have 
\[
\W_{{\bf a},K'}\ot\W_{{\bf a}',K'}=\sum_{l=0}^\infty{\mathcal U}_{B,K'}
(|\alpha_1,\ldots,\alpha_{a-1}\rangle\ot|\alpha'_1,\ldots,\alpha'_{a-1}\rangle)
\boxtimes \xi^{(n-a+1)}_l.
\]
\end{lemma}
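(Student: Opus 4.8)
The plan is to strip off the spectator spin components and reduce to a single ``tail'' identity, which is then settled by an induction on the tail degree modelled on the argument for $U_q(D^{(2)}_{\kappa'+1})$ in \cite{KO3}. By Proposition \ref{pr:relD}, every generator $e_i,f_i,k_i^{\pm1}$ with $i\in K'=\{\kappa+1,\ldots,n\}$ acts only on the tensor components at positions $\kappa+1,\ldots,n$; hence ${\mathcal U}_{B,K'}$ preserves $\W_{{\bf a},K'}\ot\W_{{\bf a}',K'}$ and never alters the values of the components $1,\ldots,\kappa$. Since each vector $(|\alpha_1,\ldots,\alpha_{a-1}\rangle\ot|\alpha'_1,\ldots,\alpha'_{a-1}\rangle)\bt\xi_l^{(n-a+1)}$ lies in $\W_{{\bf a},K'}\ot\W_{{\bf a}',K'}$, the inclusion $\supseteq$ is immediate; and because the ${\mathcal U}_{B,K'}$-action on the components $\kappa+1,\ldots,n$ is insensitive to the frozen spin entries $\alpha_i,\alpha'_i$, the lemma for arbitrary $a,{\bf a},{\bf a}'$ is equivalent to its instance $a=1$, ${\bf a}={\bf a}'={\bf 0}$, that is,
\begin{align}\label{tail}
\W_{{\bf 0},K'}\ot\W_{{\bf 0},K'}=\sum_{l\ge0}{\mathcal U}_{B,K'}\,\xi_l .
\end{align}
Here $\W_{{\bf 0},K'}\cong F^{\ot\kappa'}$ as ${\mathcal U}_{B,K'}$-modules, and ${\mathcal U}_{B,K'}\cong U_q(B_{\kappa'})$ with $F^{\ot\kappa'}$ its irreducible $q$-oscillator module.

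To prove \eqref{tail} I would first restrict to the type-$A$ subalgebra ${\mathcal U}_{B,K'\setminus\{n\}}\cong U_q(A_{\kappa'-1})$. Under it $F^{\ot\kappa'}$ decomposes as $\bigoplus_{M\ge0}V^A_M$, with $V^A_M=\langle\,|{\bf m}\rangle\mid m_{\kappa+1}+\cdots+m_n=M\,\rangle$ the $M$-th symmetric tensor representation, whose highest and lowest weight vectors are $|M{\bf e}_n\rangle$ and $|M{\bf e}_{\kappa+1}\rangle$. Thus $\W_{{\bf 0},K'}\ot\W_{{\bf 0},K'}=\bigoplus_{M_1,M_2}V^A_{M_1}\ot V^A_{M_2}$, and by Proposition \ref{pr:ht*lt} applied to $U_q(A_{\kappa'-1})$ each summand is generated over ${\mathcal U}_{B,K'\setminus\{n\}}$ by the single vector $|M_1{\bf e}_{\kappa+1}\rangle\ot|M_2{\bf e}_n\rangle$. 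Hence it suffices to show that every such vector lies in the right side of \eqref{tail}, which I would do by induction on $N=M_1+M_2$: the case $N=0$ is $\xi_0=|{\bf 0}\rangle\ot|{\bf 0}\rangle$, while for the step I would use the explicit action from Proposition \ref{pr:relD} of $\Delta(e_n)$ and $\Delta(f_n)$, which change the total tail degree by $\mp1$, together with that of the $U_q(A_{\kappa'-1})$ generators, which preserve it, to pass from the already-generated vectors of tail degree $<N$ to those of degree $N$; here the singular vector $\xi_N=\sum_{m=0}^N(-p)^{-m}q^{m(N-(m+1)/2)}{N\brack m}\,|m{\bf e}_n\rangle\ot|(N-m){\bf e}_n\rangle$, whose coefficients are all nonzero for generic $q$, supplies the single direction at degree $N$ not obtainable from below.

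I expect the inductive step to be the main obstacle. Its heart is a non-degeneracy statement: one must check that $\xi_N$ is linearly independent of the image under $\Delta(f_n)$ of the degree-$(N-1)$ graded piece, so that together they span the ``diagonal'' subspace $\langle\,|m{\bf e}_n\rangle\ot|(N-m){\bf e}_n\rangle\mid 0\le m\le N\,\rangle$; one then has to propagate this through the type-$A$ Clebsch--Gordan decomposition of $V^A_{M_1}\ot V^A_{M_2}$ to recover all the extreme vectors $|M_1{\bf e}_{\kappa+1}\rangle\ot|M_2{\bf e}_n\rangle$ with $M_1+M_2=N$. Both steps reduce to the non-vanishing, for generic $q$, of the $q$-binomials and powers of $q$ appearing in $\overline{\xi}_N$. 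This is the finite-type counterpart of the generation argument of \cite{KO3}, the difference being that the affine node $0$ is absent here, which is exactly why \eqref{tail} is a genuine sum of several ${\mathcal U}_{B,K'}$-submodules rather than a single one. Should the explicit bookkeeping become unwieldy, an alternative route is to establish complete reducibility of $F^{\ot\kappa'}\ot F^{\ot\kappa'}$ over $U_q(B_{\kappa'})$ for generic $q$ and combine it with the singular-vector classification obtained above.
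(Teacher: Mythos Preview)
Your approach is essentially the same as the one the paper invokes: the paper's ``proof'' is simply the sentence ``Follow carefully the proof of \cite[Prop.~7]{KOproc}; the proof also works in the present setting,'' and what you have written is precisely a sketch of that argument---strip off the frozen spin components (your reduction to \eqref{tail}), identify the tail with the $U_q(B_{\kappa'})$-module $F^{\otimes\kappa'}$, grade by total degree, and climb inductively using the type-$A$ subalgebra together with the new singular vector $\xi_N$ at each level. Your reduction step and the use of Proposition~\ref{pr:ht*lt} for $U_q(A_{\kappa'-1})$ are correct, and your identification of the non-degeneracy check (that $\xi_N$ is independent of the image of $\Delta(f_n)$ on the degree-$(N-1)$ diagonal) is the right crux; it does follow from the non-vanishing of the $q$-binomials in $\overline{\xi}_N$ for generic $q$, as you say.

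The only caveat is that you have left the inductive step at the level of a plan rather than a verification: you would still need to check concretely that the $N$ vectors $\Delta(f_n)\bigl(|m{\bf e}_n\rangle\otimes|(N{-}1{-}m){\bf e}_n\rangle\bigr)$ together with $\xi_N$ span the $(N{+}1)$-dimensional diagonal, and then that the type-$A$ action carries this to \emph{all} $|M_1{\bf e}_{\kappa+1}\rangle\otimes|M_2{\bf e}_n\rangle$ with $M_1+M_2=N$. Both are routine determinant computations in the spirit of \cite{KOproc}, but they are not literally written down in your proposal. With those filled in, your argument is complete and matches the paper's intended one.
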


\begin{proof}
Follow carefully the proof of \cite[Prop.~7]{KOproc}. The proof also works in the 
present setting.
\end{proof}

A direct calculation shows

\begin{lemma} \label{lem:2}
For $a\le\kappa,{\bf a}=\sum_{i=1}^{a-1}\alpha_i{\bf e}_i,
{\bf a}'=\sum_{i=1}^{a-1}\alpha'_i{\bf e}_i$ 
we have
\begin{align*}
&(\pi_x\ot\pi_y)\Delta(f_a\cdots f_{n-1})
(|\alpha_1,\ldots,\alpha_{a-1}\rangle\ot|\alpha'_1,\ldots,\alpha'_{a-1}\rangle)
\boxtimes \xi^{(n-a+1)}_l \\
&\quad=[l](|\alpha_1,\ldots,\alpha_{a-1}\rangle\ot|\alpha'_1,\ldots,\alpha'_{a-1}\rangle)
\boxtimes(|0\rangle\ot|1\rangle+\mathrm{i}q^{l-1/2}
|1\rangle\ot|0\rangle)\boxtimes \xi^{(n-a)}_{l-1}.
\end{align*}
\end{lemma}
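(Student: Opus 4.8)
The plan is to expand $(\pi_x\otimes\pi_y)\Delta(f_a\cdots f_{n-1})$ by iterating $\Delta(f_i)=f_i\otimes 1+k_i^{-1}\otimes f_i$ and to follow, monomial by monomial, how the single box that $\overline{\xi}_l$ places at site $n$ is transported leftward. Recall from Proposition~\ref{pr:relD} that for $0<i<n$ the operator $f_i$ moves one box from site $i+1$ to site $i$ with weight $[m_{i+1}]$, and that in the vector $(|\alpha_1,\ldots,\alpha_{a-1}\rangle\otimes|\alpha'_1,\ldots,\alpha'_{a-1}\rangle)\boxtimes\xi^{(n-a+1)}_l$ every site $a,a+1,\ldots,n-1$ of both tensor factors is empty; hence the only box available to $f_a,\ldots,f_{n-1}$ is the one at site $n$ coming from $\overline{\xi}_l$.

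The main point is that the descent is rigid. Applying $\Delta(f_{n-1})$, the summand $f_{n-1}\otimes 1$ deposits the box at site $n-1$ of the first factor, while $k_{n-1}^{-1}\otimes f_{n-1}$ deposits it at site $n-1$ of the second factor; once this choice is made, each subsequent $\Delta(f_i)$ with $i=n-2,\ldots,a$ has exactly one non-vanishing summand, since $f_i$ applied to a factor with $m_{i+1}=0$ gives $[0]=0$. Thus the a priori exponential sum collapses to precisely two histories: the box descends from site $n$ to site $a$ entirely inside one of the two tensor factors. I would then record the scalar accumulated along each history. Using $q_n=q$ and $[1]=1$, the first-factor history contributes only the weight $[m_n]$ of the initial $f_{n-1}\otimes 1$, while the second-factor history contributes $q^{-m_n}[l-m_n]$, the factor $q^{-m_n}$ being the eigenvalue of $k_{n-1}^{-1}$ on the first factor and all later $k_i^{-1}$ acting by $1$.

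Feeding in $\overline{\xi}_l=\sum_m c_m\,|m\rangle\otimes|l-m\rangle$ with $c_m=(-p)^{-m}q^{m(l-(m+1)/2)}{l\brack m}$, the image is a combination of vectors of the two shapes $(\cdots)\boxtimes(|1\rangle\otimes|0\rangle)\boxtimes(|m-1\rangle\otimes|l-m\rangle)$ and $(\cdots)\boxtimes(|0\rangle\otimes|1\rangle)\boxtimes(|m\rangle\otimes|l-m-1\rangle)$, with the middle factor at site $a$ and the last at site $n$. Reindexing by $m=m'+1$ in the first family and keeping $m=m'$ in the second, and recalling that $\overline{\xi}_{l-1}$ has coefficients $c'_{m'}=(-p)^{-m'}q^{m'(l-1-(m'+1)/2)}{l-1\brack m'}$, the whole assertion reduces to the two scalar identities $c_{m'}q^{-m'}[l-m']=[l]\,c'_{m'}$ and $c_{m'+1}[m'+1]=[l]\,\mathrm{i}q^{l-1/2}\,c'_{m'}$. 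Both follow at once from the elementary relations ${l\brack m}[l-m]={l\brack m+1}[m+1]=[l]{l-1\brack m}$, from $(-p)^{-1}=\mathrm{i}q^{1/2}$ with $p=\mathrm{i}q^{-1/2}$, and from a short comparison of the exponents of $q$.

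The only conceptual step is establishing the rigidity of the descent, which is what reduces an a priori exponential sum to just two terms; after that the argument is the routine $q$-binomial bookkeeping indicated above, and that is where essentially all the (purely mechanical) effort lies.
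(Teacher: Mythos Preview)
Your proof is correct and is precisely the direct calculation the paper alludes to (the paper itself only writes ``A direct calculation shows'' before the lemma and gives no further details). Your rigidity observation---that after the first $\Delta(f_{n-1})$ picks a tensor factor, every subsequent $\Delta(f_i)$ has a unique surviving summand because the other factor has $m_{i+1}=0$---is exactly what collapses the sum, and the two scalar identities you reduce to are easily checked from ${l\brack m}[l-m]=[l]{l-1\brack m}$, ${l\brack m+1}[m+1]=[l]{l-1\brack m}$, and $(-p)^{-1}=\mathrm{i}q^{1/2}$.
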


\begin{lemma} \label{lem:4}
For any $l\in\Z_{\ge0}$, $(V^{\ot\kappa}\ot V^{\ot\kappa})
\boxtimes \xi^{(n-\kappa)}_l$ is generated from 
$\{(|{\bf 0}_\kappa\rangle\ot|{\bf 0}_\kappa\rangle)\boxtimes \xi^{(n-\kappa)}_m
\mid m\in\Z_{\ge0}\}$ over ${\mathcal U}_B$.
\end{lemma}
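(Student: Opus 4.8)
The plan is to prove Lemma~\ref{lem:4} by induction on $a$, $0\le a\le\kappa$, of the following statement, call it $P(a)$: \emph{for every $l\in\Z_{\ge0}$ and all ${\boldsymbol\beta},{\boldsymbol\beta}'\in\{0,1\}^a$ the vector $(|{\boldsymbol\beta}\rangle\ot|{\boldsymbol\beta}'\rangle)\boxtimes\xi^{(n-a)}_l$ belongs to the $\mathcal{U}_B$-submodule $M$ generated by $\{\xi_m=(|{\bf 0}_\kappa\rangle\ot|{\bf 0}_\kappa\rangle)\boxtimes\xi^{(n-\kappa)}_m\mid m\in\Z_{\ge0}\}$.} Here $P(0)$ is just $\xi_l\in M$, which holds by definition of $M$, and $P(\kappa)$ is exactly the assertion of Lemma~\ref{lem:4} (since the $|{\boldsymbol\beta}\rangle\ot|{\boldsymbol\beta}'\rangle$ span $V^{\ot\kappa}\ot V^{\ot\kappa}$).

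For the inductive step fix $a$ with $1\le a\le\kappa$ and assume $P(a-1)$. I work inside the ``slice'' $S_l:=(V^{\ot a}\ot V^{\ot a})\boxtimes\xi^{(n-a)}_l$, on which the subalgebra $\mathcal{U}_{B,\{0,\dots,a-1\}}\cong U_{-q^{-1}}(B_a)$ (the $B_a$ sub-diagram of ${\mathcal U}_{B,K-1}$) acts, affecting only the first $a$ tensor slots; there $V^{\ot a}\ot V^{\ot a}$ is the tensor square of the spin representation of $U_{-q^{-1}}(B_a)$. Applying $P(a-1)$ to the truncated labels shows that every $(|{\boldsymbol\delta},0\rangle\ot|{\boldsymbol\delta}',0\rangle)\boxtimes\xi^{(n-a)}_l$ with ${\boldsymbol\delta},{\boldsymbol\delta}'\in\{0,1\}^{a-1}$ lies in $M$, i.e.\ $M$ contains the entire subspace of $S_l$ whose $a$-th slot is $|0\rangle\ot|0\rangle$. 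Next, applying $(\pi_x\ot\pi_y)\Delta(f_a\cdots f_{n-1})$ to $(|{\boldsymbol\gamma}\rangle\ot|{\boldsymbol\gamma}'\rangle)\boxtimes\xi^{(n-a+1)}_{l+1}$ — which is in $M$ by $P(a-1)$ — and invoking Lemma~\ref{lem:2} together with $[l+1]\ne0$ for generic $q$, I obtain $(|{\boldsymbol\gamma}\rangle\ot|{\boldsymbol\gamma}'\rangle)\boxtimes(|0\rangle\ot|1\rangle+\mathrm{i}q^{l+1/2}|1\rangle\ot|0\rangle)\boxtimes\xi^{(n-a)}_l\in M$ for all ${\boldsymbol\gamma},{\boldsymbol\gamma}'\in\{0,1\}^{a-1}$.

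It remains to show that these two families of vectors generate all of $V^{\ot a}\ot V^{\ot a}$ over $\mathcal{U}_{B,\{0,\dots,a-1\}}$. The first family contains the lowest$\ot$lowest weight vector $|{\bf 0}_a\rangle\ot|{\bf 0}_a\rangle$, hence generates the irreducible summand of $\mathrm{spin}\ot\mathrm{spin}$ generated by the lowest-weight vector; the second family is what lets one reach the remaining summands. Concretely I would apply the generators at nodes $0,1,\dots,a-1$ (whose explicit action is given in Proposition~\ref{pr:relD}) to both families: the generator at node $a-1$ produces a unit in the $a$-th slot, and combining its image on the first family with the second family and cancelling lower-weight tails — a step legitimate because $q$ is generic — yields the vector $|{\bf 1}_a\rangle\ot|{\bf 0}_a\rangle$, which is (highest weight)$\ot$(lowest weight) for $U_{-q^{-1}}(B_a)$. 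By Proposition~\ref{pr:ht*lt} this single vector already generates $V^{\ot a}\ot V^{\ot a}$, so $S_l\subset M$ and $P(a)$ follows; this closes the induction. (Alternatively, one verifies that $M\cap S_l$ has nonzero intersection with every irreducible summand of $\mathrm{spin}\ot\mathrm{spin}$ and invokes complete reducibility of $U_{-q^{-1}}(B_a)$-modules for generic $q$.)

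The hard part is this last paragraph. In the homogeneous cases \cite{KS,KO3,KOproc} the $V$-block is absent or uniform, so the module is generated directly from the singular vectors; in the mixed situation here one must actually fill up $V^{\ot a}\ot V^{\ot a}$, and doing so requires a careful weight-by-weight analysis of the $U_{-q^{-1}}(B_a)$-action on the seed vectors, the relevant coefficient matrices being nonsingular only thanks to genericity of $q$. The cleanest organization I see is the one above: reduce everything to exhibiting the single extremal vector $|{\bf 1}_a\rangle\ot|{\bf 0}_a\rangle$ inside $M$ and then quote Proposition~\ref{pr:ht*lt}.
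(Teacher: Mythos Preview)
Your inductive scheme is the paper's scheme: induct on the number $a$ of $V$-slots,
use $P(a-1)$ to get the ``slot~$a=|0\rangle\ot|0\rangle$'' layer (your Family~1),
feed Lemma~\ref{lem:2} at level $l+1$ to get the vector with slot~$a$ equal to
$|0\rangle\ot|1\rangle+\mathrm{i}q^{l+1/2}|1\rangle\ot|0\rangle$ (your Family~2),
and then try to manufacture an extremal vector so that
Proposition~\ref{pr:ht*lt} finishes the step. Where your write-up stops being a
proof is exactly the point you flag yourself: the sentence ``combining its image on the
first family with the second family and cancelling lower-weight tails --- a step
legitimate because $q$ is generic --- yields $|{\bf 1}_a\rangle\ot|{\bf 0}_a\rangle$''
is an assertion, not an argument. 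When you actually carry this out, applying
$e_{a-1}$ to a Family~1 vector and subtracting a Family~2 vector produces a new
tail of the same shape one step over, and you are led to a chain of $a+1$ vectors
in a fixed weight space; nothing you have written shows that this chain is
linearly independent from the known vectors.

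The paper closes this gap by writing the chain down explicitly. From the
induction hypothesis it applies $e_0e_1\cdots e_{a-1}$ and $e_be_{b+1}\cdots e_{a-1}$
($1\le b\le a-1$) to specific Family~1 vectors, and together with your Family~2
vector (that is exactly the paper's \eqref{eq3}) obtains $a+1$ explicit linear
combinations of
$u_0=|{\bf 0}_a\rangle\ot|{\bf 1}_a\rangle$ and
$u_b=|{\bf 0}_{b-1},1,{\bf 0}_{a-b}\rangle\ot|{\bf 1}_{b-1},0,{\bf 1}_{a-b}\rangle$
($1\le b\le a$). The resulting $(a{+}1)\times(a{+}1)$ coefficient matrix has
determinant $\mathrm{i}q^{l+1/2}y-p^{-1}q^{a-1}x$, hence is invertible, and one
solves for $u_0=|{\bf 0}_a\rangle\ot|{\bf 1}_a\rangle$ (lowest\,$\ot$\,highest,
rather than your highest\,$\ot$\,lowest). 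Two remarks on your sketch: first, the
use of $e_0$ is essential --- it is what injects the spectral parameters $x,y$
into the system, and without it the remaining relations are degenerate; your
description (``the generator at node $a-1$ produces a unit in the $a$-th slot'')
suppresses this. Second, the nonvanishing of the determinant comes from
genericity of $x/y$, not merely of $q$; already for $a=1$ the two seed vectors
$|0\rangle\ot|1\rangle+\mathrm{i}q^{l+1/2}|1\rangle\ot|0\rangle$ and
$\Delta(e_0)(|0\rangle\ot|0\rangle)=y|0\rangle\ot|1\rangle+xp^{-1}|1\rangle\ot|0\rangle$
are independent only because $x/y$ is generic.
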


\begin{proof}
We prove $(V^{\ot k})^{\ot2}\boxtimes \xi_l^{(n-k)}$ is generated from
$\{(|{\bf 0}_k\rangle\ot|{\bf 0}_k\rangle)\boxtimes \xi^{(n-k)}_m
\mid m\in\Z_{\ge0}\}$ by induction on $k$ ($0\le k\le\kappa$). 
When $k=0$, there is nothing to prove. Suppose the statement
is valid with $k-1$, that is, $(V^{\ot (k-1)})^{\ot2}\boxtimes \xi_l^{(n-k+1)}$ is
generated. Then the following vectors are generated.
\begin{align}
&(\pi_x\ot\pi_y)\Delta(e_0e_1\cdots e_{k-1})(|{\bf 0}_{k-1}\rangle\ot|{\bf 1}_{k-1}\rangle)
\boxtimes \xi_l^{(n-k+1)} \nonumber \\
&\qquad=(y|{\bf 0}_k\rangle\ot|{\bf 1}_k\rangle
+xp^{-1}|1,{\bf 0}_{k-1}\rangle\ot|0,{\bf 1}_{k-1}\rangle)\xi_l^{(n-k)},
\label{eq1}\\
&(\pi_x\ot\pi_y)\Delta(e_ae_{a+1}\cdots e_{k-1})
(|{\bf 0}_{a-1},1,{\bf 0}_{k-a-1}\rangle\ot|{\bf 1}_{k-1}\rangle)
\boxtimes \xi_l^{(n-k+1)} \nonumber \\
&\qquad=(|{\bf 0}_{a-1},1,{\bf 0}_{k-a}\rangle
\ot|{\bf 1}_{a-1},0,{\bf 1}_{k-a}\rangle\nonumber\\
&\qquad\quad-q|{\bf 0}_a,1,{\bf 0}_{k-a-1}\rangle
\ot|{\bf 1}_a,0,{\bf 1}_{k-a-1}\rangle)
\boxtimes \xi_l^{(n-k)}\quad(1\le a\le k-1).
\label{eq2}
\end{align}
By Lemma \ref{lem:2} with 
$a=k,{\bf a}={\bf 0},{\bf a'}={\bf e}_1+\cdots+{\bf e}_{k-1}$ 
and $l$ replaced with $l+1$, we can also generate 
\begin{equation} \label{eq3}
(|{\bf 0}_k\rangle\ot|{\bf 1}_k\rangle+\mathrm{i}q^{l+1/2}
|{\bf 0}_{k-1},1\rangle\ot|{\bf 1}_{k-1},0\rangle)\boxtimes \xi_l^{(n-k)}.
\end{equation}
Take the coefficients of $|{\bf 0}_k\rangle\ot|{\bf 1}_k\rangle,
|{\bf 0}_{a-1},1,{\bf 0}_{k-a}\rangle\ot|{\bf 1}_{a-1},0,{\bf 1}_{k-a}\rangle$
($1\le a\le k$) (written dropping $\boxtimes\, \xi_l^{(n-k)}$) from \eqref{eq1},
\eqref{eq2}, \eqref{eq3} and make a matrix $C=(c_{ij})_{1\le i,j\le k+1}$ where 
\[
c_{ij}=\left\{
\begin{array}{ll}
y&(i=j=1)\\
xp^{-1}&(i=1\,\&\,j=2)\\
1&(i=j\,\&\,2\le i\le k\text{ or }i=k+1\,\&\,j=1)\\
-q&(i=j-1\,\&\,2\le i\le k)\\
\mathrm{i}q^{l+1/2}\quad&(i=j=k+1).
\end{array}
\right.
\]
Since $\det C=\mathrm{i}q^{l+1/2}y-p^{-1}q^{k-1}x\ne0$, 
$(|{\bf 0}_k\rangle\ot|{\bf 1}_k\rangle)\boxtimes \xi_l^{(n-k)}$ is also generated.
Now notice that $\mathcal{U}_{B,\{0,\ldots,k-1\}}$ is isomorphic to $U_{-q^{-1}}(B_k)$
with the opposite ordering of Dynkin indices. With this identification, $V^{\ot k}$
is the spin representation with $|{\bf 1}_k\rangle$ as a highest weight vector.
Hence, $|{\bf 0}_k\rangle\ot|{\bf 1}_k\rangle$ is a tensor product
of a lowest weight vector and a highest one with respect to $U_{-q^{-1}}(B_k)$.
By Proposition \ref{pr:ht*lt} $(V^{\ot k})^{\ot2}\boxtimes \xi_l^{(n-k)}$ is generated
and the induction proceeds.
\end{proof}

\begin{proposition}\label{pr:irrD}
$\W\ot\W$ is irreducible.
\end{proposition}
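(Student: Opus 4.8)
The plan is to show that an arbitrary nonzero ${\mathcal U}_B$-submodule $W$ of $\W\ot\W$ must coincide with the whole space. I would organise this into four steps: (A) produce one singular vector $\xi_l$ inside $W$; (B) bootstrap to $\xi_m\in W$ for every $m\ge0$; (C) use Lemma~\ref{lem:4} to recover the full fermionic factor $V^{\ot\kappa}\ot V^{\ot\kappa}$; and (D) use Lemma~\ref{lem:1} to recover the full bosonic factor $F^{\ot\kappa'}\ot F^{\ot\kappa'}$.

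For (A), note that the $k_i$ act diagonally in the basis $\{|{\bf m}\rangle\ot|{\bf m}'\rangle\}$ with eigenvalues depending only on ${\bf m}+{\bf m}'$, so $W$ is a sum of weight spaces and contains a nonzero weight vector $w$. Each $(\pi_x\ot\pi_y)\Delta(e_i)$ with $1\le i\le n$ is homogeneous and, whenever nonzero, raises the value of the $\Z_{\le0}$-valued form $-\sum_{j=1}^n(n+1-j)(m_j+m'_j)$ by a positive integer; hence iterating suitable $\Delta(e_i)$ on $w$ terminates after finitely many steps at a nonzero weight vector annihilated by all $\Delta(e_i)$, $1\le i\le n$. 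Since the only such singular vectors are the $\xi_l$ (by the classification above), we get $\xi_l\in W$ for some $l$.

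For (B), the last identity of Proposition~\ref{pr:recD} gives $(\pi_x\ot\pi_y)\Delta(f_0f_1\cdots f_{n-1})\xi_m=\mathrm{i}[m]q^{-1/2}(q^mx^{-1}+y^{-1})\xi_{m-1}$ with a coefficient nonzero for generic $q$ and $x/y$, so iterating this carries $\xi_l$ down to $\xi_0\in W$. To climb back up I would use the first two identities of Proposition~\ref{pr:recD}, which express $(qx+y)\xi_1$ and, for $m\ge1$, $(q^{m+1}x+y)\xi_{m+1}$ as explicit $\C$-linear combinations of $\Delta(e_{n-1}\cdots e_1e_0)\xi_m$ together with $\Delta(f_n)\xi_0$, respectively $\Delta(f_n)^2\xi_{m-1}$; all of the latter lie in $W$ by the inductive hypothesis, and the scalar multiplying $\xi_{m+1}$ is nonzero for generic $x/y$, so inductively $\xi_m\in W$ for every $m\ge0$.

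For (C) and (D), observe that $(|{\bf 0}_\kappa\rangle\ot|{\bf 0}_\kappa\rangle)\boxtimes\xi^{(n-\kappa)}_m=\xi_m\in W$ for all $m$, so Lemma~\ref{lem:4} yields $(V^{\ot\kappa}\ot V^{\ot\kappa})\boxtimes\xi^{(n-\kappa)}_l\subseteq W$ for every $l$; in particular $(|{\bf a}\rangle\ot|{\bf a}'\rangle)\boxtimes\xi^{(n-\kappa)}_l\in W$ for all ${\bf a},{\bf a}'\in\{0,1\}^\kappa$ and all $l\ge0$. Since ${\mathcal U}_{B,K'}\cong U_q(B_{\kappa'})$ acts through $\Delta$ only on the two copies of the bosonic sites $\kappa+1,\dots,n$ and leaves the fermionic prefix inert, the computation proving Lemma~\ref{lem:1} applies verbatim with the prefix ${\bf a},{\bf a}'$ occupying all of the sites $1,\dots,\kappa$ (the limiting case ``$a=\kappa+1$''), giving $\sum_{l\ge0}{\mathcal U}_{B,K'}\bigl((|{\bf a}\rangle\ot|{\bf a}'\rangle)\boxtimes\xi^{(n-\kappa)}_l\bigr)=(|{\bf a}\rangle\ot|{\bf a}'\rangle)\boxtimes(F^{\ot\kappa'}\ot F^{\ot\kappa'})$. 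Ranging over all ${\bf a},{\bf a}'\in\{0,1\}^\kappa$ these subspaces exhaust $\W\ot\W$, whence $W=\W\ot\W$. I expect the genuine difficulty to sit entirely in (C)/(D) --- peeling off first the fermionic factor and then the bosonic factor from the single distinguished vectors $\xi^{(n-\kappa)}_l$, which is exactly where the mixed structure $\W=V^{\ot\kappa}\ot F^{\ot\kappa'}$ is felt --- and this is precisely what Lemmas~\ref{lem:1}, \ref{lem:2} and~\ref{lem:4} are designed to supply; steps (A) and (B) are the routine ``raise to a singular vector, then run the intertwiner recursions'' part of the argument.
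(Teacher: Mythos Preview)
Your proposal is correct and follows essentially the same four-step route as the paper's proof: reach a singular vector by raising, generate all $\xi_m$ via Proposition~\ref{pr:recD}, invoke Lemma~\ref{lem:4} for the fermionic factor, then Lemma~\ref{lem:1} for the bosonic factor. Your explicit termination argument in (A) and the detailed up--down induction in (B) simply flesh out what the paper states in one line, and your observation that step~(D) uses Lemma~\ref{lem:1} in the limiting case where the prefix occupies all $\kappa$ fermionic sites is a fair reading of the paper's terse ``the claim follows from Lemma~\ref{lem:1}''.
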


\begin{proof}
Let $W$ be a nonzero submodule of $\W^{\ot2}$. By applying $e_i$ ($1\le i\le n$)
on a nonzero vector in $W$, we arrive at some singular vector $\xi_l$. By Proposition
\ref{pr:recD}, one can generate all $\xi_m$ for $m\in\Z_{\ge0}$. Then Lemma \ref{lem:4}
shows  $(V^{\ot\kappa})^{\ot2}\boxtimes \xi_l\subset W$ for any $l$.
The claim follows from Lemma \ref{lem:1}.
\end{proof}

\section*{Acknowledgments}
The authors thank 
Toshiyuki Tanisaki, Zengo Tsuboi and Hiroyuki Yamane for useful discussion.
Special thanks are due to Shouya Maruyama for a critical reading of the manuscript.
This work is supported by Australian Research Council and 
Grants-in-Aid for Scientific Research No.~23340007 and No.~24540203
from JSPS.

\end{document}